\theoremstyle{plain}
\newtheorem{theorem}{Theorem}[section]
\newtheorem{lemma}[theorem]{Lemma}
\newtheorem{corollary}[theorem]{Corollary}
\newtheorem{proposition}[theorem]{Proposition}
\newtheorem{question}[theorem]{Question}
\numberwithin{equation}{section}
\theoremstyle{remark}
\newtheorem{remark}[theorem]{Remark}
\newtheorem*{claim}{Claim}
\theoremstyle{definition}
\def\Q{\Bbb{Q}}
\def\Z{\Bbb{Z}}
\def\R{\Bbb{R}}
\def\Bl{\operatorname{Bl}}
\def\CP2{\mathbb{CP}^2}
\def\N{\Bbb{N}}
\def\be{\begin{equation}} \def\ee{\end{equation}}
\def\id{\operatorname{id}}
\def\l{\lambda}
\def\L{\L_{\R}}
\def\ll{\langle}
\def\rr{\rangle}
\def\ext{\operatorname{Ext}}
\def\coker{\operatorname{coker}}
\def\sm{\setminus}
\def\bp{\begin{pmatrix}}
\def\ep{\end{pmatrix}}
\def\bn{\begin{enumerate}}
\def\en{\end{enumerate}}
\def\rk{\operatorname{rank}}
\def\ba{\begin{array}}
\def\ea{\end{array}}
\def\L{\Lambda}
\def\a{\alpha}
\def\b{\beta}
\def\G{\Gamma}
\def\s{\sigma}
\def\t{\theta}
\def\lk{\operatorname{lk}}
\def\fr12{\frac{1}{2}}
\def\ol{\overline}
\def\wt#1{\widetilde{#1}}
\def\S{\Sigma}
\def\sp{\operatorname{sp}}
\def\wsp{\operatorname{wsp}}
\def\ord{\operatorname{ord}}
\def\hom{\operatorname{Hom}}
\def\Ext{\operatorname{Ext}}
\def\cl{\operatorname{cl}}
\def\O{\Omega}
\def\p{\partial}
\def\tor{\operatorname{Tor}}
\def\tordelta{\Delta^{\operatorname{tor}}}
\def\wti{\widetilde}
\def\deltator{\Delta^{\operatorname{tor}}}
\def\toiso{\xrightarrow{\simeq}}
\def\K{\mathcal{K}}
\def\J{\mathcal{J}}
\def\co{\colon}
\def\op{\operatorname}
\def\im{\op{Im}}
\begin{document}

\title{Blanchfield forms and Gordian distance}

\author{Maciej Borodzik}
\address{Institute of Mathematics, University of Warsaw, Warsaw, Poland}
\email{mcboro@mimuw.edu.pl}

\date{\today}
\author{Stefan Friedl}
\address{Fakult\"at f\"ur Mathematik\\ Universit\"at Regensburg\\   Germany}
\email{sfriedl@gmail.com}

\author{Mark Powell}
\address{
  D\'{e}partement de Math\'{e}matiques,
  UQAM, Montr\'{e}al, QC, Canada
}
\email{mark@cirget.ca}

\def\subjclassname{\textup{2010} Mathematics Subject Classification}
\expandafter\let\csname subjclassname@1991\endcsname=\subjclassname
\expandafter\let\csname subjclassname@2000\endcsname=\subjclassname

\subjclass{Primary 57M27}
\keywords{link, unlinking number, splitting number, Alexander module, Blanchfield pairing}

\begin{abstract}
Given a link in $S^3$ we will use invariants derived from the Alexander module
and the Blanchfield pairing to obtain lower bounds on the Gordian distance between links, the unlinking number and various splitting numbers.
These lower bounds generalise results recently obtained by Kawauchi.

We give an application restricting the knot types which can arise from a sequence of splitting operations on a link. This allows us to answer a question asked by Colin Adams in 1996.
\end{abstract}

\maketitle

\section{Introduction}

\subsection{Lower bounds on the clasp number and the Gordian distance}
In this paper, by an $m$-component link $L\subset S^3$ we mean an embedding
\[ L\co \cup_{i=1}^m S^1\times \{i\} \to S^3.\]
Given $i=1,\dots,m$ we write $L_i=L(S^1\times \{i\})$ and we endow it with the orientation inherited from the standard orientation of $S^1$. By a slight abuse of notation we often denote the union of the $L_i$ again by $L$.
Throughout the paper we will identify  two $m$-component links $L$ and $J$ if there exists an isotopy through links from $L$ to $J$. Slightly more informally, an $m$-component link is an isotopy class of an oriented, ordered collection of $m$ disjoint circles in $S^3$.

Let $L$ and $J$ be  $m$-component links in $S^3$.
 We are interested in the following measures of how different $L$ and $J$ are.
\bn
\item The \emph{Gordian distance $g(L,J)$} which is defined as the minimal number of crossing changes required to turn a diagram representing $L$ into a diagram for  $J$. Here we take the minimum over all diagrams of $L$.
\item The \emph{4-dimensional clasp number $c(L,J)$} which is the minimal number of double points of an immersed concordance between $L$ and $J$. An \emph{immersed
concordance} is a proper immersion of $m$ annuli $f_j \colon S^1 \times I \looparrowright S^3 \times I$ with   $f_j(S^1 \times \{0\}) = L_j\times\{0\}$ and $f_j(S^1 \times \{1\}) = -J_j\times \{1\}$,  for $j=1,\dots,m$. The only allowed singularities of the immersion
are ordinary double points.
\en
Note that $c(L,J) \leq g(L,J)$ since a sequence of crossing changes and isotopies gives rise to an immersed concordance, with one double point for each crossing change.

Our goal in this paper is to give lower bounds on the Gordian distance and the 4-dimensional clasp number from the Alexander module and the Blanchfield pairing of a link. The relationship between the Alexander module and the unlinking number has been explored in several earlier papers, see e.g.\ \cite{BW84,Tr88,CFP13}. The first and second authors undertook a systematic study of the relationship between the Blanchfield pairing and the unknotting number of knots in \cite{BF12,BF13}.

Given an $m$-component  link $L$ we refer to the complement of an open tubular neighbourhood of $L$ as the \emph{exterior of $L$} and we denote it by $X_L=S^3\sm \nu L$.
We write  $\L=\Z[t_1^{\pm 1},\dots,t_m^{\pm 1}]$. We can associate the Alexander module $H_1(X_L;\L)$ to $L$ and we denote  the rank of the Alexander module by $\b(L):=\rk_{\L}(H_1(X_L;\L))$.
The Alexander polynomial  $\Delta_L\in \L$ of $L$  is defined as the order of the Alexander module. Note that $\Delta_L=0$ if and only if $\b(L)>0$. We also consider the \emph{torsion Alexander polynomial} $\deltator_L$ as the order of the torsion submodule $\tor_{\L} H_1(X_L;\L)$. Note that $\deltator_L$ is always non-zero.

Next we consider the Blanchfield form, which was first introduced for knots by Blanchfield \cite{Bl57} in 1957. Let $S \subset \L$ be the multiplicative subset generated by the polynomials $t_i-1$, for $i=1,\dots,m$.  By inverting the elements of $S$ we obtain the ring
\[\L_S := \Z[t_1^{\pm 1},\dots,t_m^{\pm 1},(t_1-1)^{-1},\dots,(t_m-1)^{-1}].\]
Furthermore we denote the quotient field of $\L$ by $\Omega=\Q(t_1,\dots,t_m)$. This is also the quotient field of $\L_S$. The Blanchfield form
\[\Bl_L\co  \hat{t}H_1(X_L;\L_S)\times \hat{t}H_1(X_L;\L_S)\to \Omega/\L_S\]
is a nonsingular, hermitian, sesquilinear form defined on a certain quotient $\hat{t}H_1(X_L;\L_S)$ of the  torsion submodule $\tor_{\L_S} H_1(X_L;\L_S)$;
see Section~\ref{section:blanchfield} for details.
We say that a hermitian form $\lambda\co H\times H\to \Omega/\L_S$ is \emph{metabolic} if it admits a \emph{metabolizer}, i.e.\ a submodule $P$ of $H$ with $P=P^\perp:=\{h\in H\,|\, \lambda(p,h)=0\mbox{ for all }p\in P\}$.
The following is our first main theorem.

\begin{theorem}\label{mainthm}
Let $L$ and $J$ be  $m$-component links.  Then  $|\beta(L)-\beta(J)| \leq c(L,J)$.
Moreover, if $c(L,J) = |\beta(L) - \beta(J)|$, then the Witt sum of Blanchfield forms $\Bl_L \oplus -\Bl_{J}$ is metabolic.
\end{theorem}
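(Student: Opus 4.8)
The plan is to extract a $4$--manifold $W$ from a minimal immersed concordance, with $\p W$ essentially the disjoint union of $X_L$ and $X_J$, and then to run a Poincar\'e--Lefschetz duality argument, over $\Omega=\Q(t_1,\dots,t_m)$ for the inequality and over $\L_S$ for the metabolizer. So first I would fix an immersed concordance $F=\bigsqcup_{j=1}^m F_j$ from $L$ to $J$ realising $c:=c(L,J)$ double points, each $F_j$ an immersed annulus from $L_j$ to $-J_j$, and, after a small isotopy, push the $c$ double points into a collar, so that $F\cap(S^3\times[0,\tfrac{1}{3}])$ and $F\cap(S^3\times[\tfrac{2}{3},1])$ are embedded concordances. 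Set $W:=(S^3\times I)\sm\nu F$. Then $\p W=X_L\cup Z\cup X_J$, where $Z$ is assembled from boundaries of regular neighbourhoods of the $F_j$ (circle bundles over surfaces with boundary, i.e.\ copies of $T^2\times I$) and, near each double point, from the exterior $T^2\times I$ of a Hopf link, all glued to $X_L$ and $X_J$ along their boundary tori; in particular $Y:=\p W$ is a closed oriented $3$--manifold. (One could instead blow up $S^3\times I$ at the double points to make $F$ genuinely embedded; I suppress this.)

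Since each $F_j$ runs from $L_j$ to $J_j$, the meridian $\mu_j$ of $F_j$ is at once a meridian of $L_j$ and of $J_j$, and the $\mu_j$ generate a free quotient $\Z^m$ of $H_1(W)$, so there is $\phi\co\pi_1(W)\to\Z^m=\langle t_1,\dots,t_m\rangle$ restricting to the standard epimorphisms on $\pi_1(X_L)$ and $\pi_1(X_J)$. For $R\in\{\Omega,\L_S\}$, every circle fibre in $Z$ maps to some $t_i$ and $t_i-1$ is a unit of $R$, so $H_*(S^1;R)=0$ for these fibres; hence $Z$ (including the Hopf exteriors $T^2\times I$) is $R$--acyclic and $H_*(\p W;R)\cong H_*(X_L;R)\oplus H_*(X_J;R)$. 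The geometric heart of the argument is the claim that $X_L\hookrightarrow W\hookleftarrow X_J$ are \emph{$c$-close to $\Omega$-homology equivalences}: $H_*(W,X_L;\Omega)$ and $H_*(W,X_J;\Omega)$ are concentrated in degree $2$ and have dimension at most $c$, exactly as if $W$ arose from $X_L\times I$, resp.\ $X_J\times I$, by attaching $c$ two--handles along null-homologous curves. One proves this by working in $S^3\times[\tfrac{1}{3},\tfrac{2}{3}]$ and removing one double point at a time: the exterior of an embedded concordance is an $\Omega$-homology product (this already contains the fact that $\beta$ is a concordance invariant), and a single transverse double point adds exactly one two--handle, the two--handle of the associated crossing change.

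Granting this, the inequality is a diagram chase over $\Omega$. Here $\beta(L)=\dim_\Omega H_1(X_L;\Omega)$ (localise $H_1(X_L;\L)$), and since $H_0(X_L;\Omega)=H_3(X_L;\Omega)=0$ while $\chi(X_L)=0$, also $\dim_\Omega H_2(X_L;\Omega)=\beta(L)$. The long exact sequence of $(W,X_L)$ gives $\dim_\Omega H_1(W;\Omega)=\beta(L)-r_L$ for an integer $r_L$ with $0\le r_L\le c$, and symmetrically $\dim_\Omega H_1(W;\Omega)=\beta(J)-r_J$ with $0\le r_J\le c$. Hence $\beta(L)-\beta(J)=r_L-r_J$ and $|\beta(L)-\beta(J)|\le c=c(L,J)$, proving the first assertion. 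If equality holds, say $\beta(L)\ge\beta(J)$, then $r_L=c$ and $r_J=0$: over $\Omega$ the connecting map $H_2(W,X_L;\Omega)\to H_1(X_L;\Omega)$ is injective, while $H_2(W,X_J;\Omega)\to H_1(X_J;\Omega)$ vanishes. Passing to $\L_S$, this forces $H_*(W;\L_S)$ to be as small as the exact sequences of $(W,X_L)$ and $(W,X_J)$ permit.

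For the metabolizer, the closed $3$--manifold $Y=\p W$ carries a linking pairing on $\hat{t}H_1(Y;\L_S)$ which, by the computation $H_*(\p W;\L_S)\cong H_*(X_L;\L_S)\oplus H_*(X_J;\L_S)$ above and the orientation--reversed $J$--end in the definition of an immersed concordance, is precisely $\Bl_L\oplus-\Bl_J$. Let $P:=\Ker\bigl(\hat{t}H_1(Y;\L_S)\to H_1(W;\L_S)\bigr)$. That $P$ is isotropic, $P\subseteq P^\perp$, is the usual Stokes--type argument for a $3$--manifold bounding a $4$--manifold; the reverse inclusion $P^\perp\subseteq P$ is where the equality is used, since the extremality of the connecting maps established above, together with the duality $H_k(W,Y;\L_S)\cong H^{4-k}(W;\L_S)$, universal coefficients, and the long exact sequence of $(W,Y)$, identify $P=\im\bigl(\p\co H_2(W,Y;\L_S)\to H_1(Y;\L_S)\bigr)$ with $P^\perp$. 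Then $P=P^\perp$, so $\Bl_L\oplus-\Bl_J$ is metabolic. I expect the chief obstacle to be the geometric claim of the second paragraph --- turning the $4$--dimensional data of the immersed concordance into the statement that one double point costs one two--handle along a null-homologous curve, which requires controlling the general position of the height function on the annuli and arranging that the attaching circles are null-homologous so that $\phi$ is defined --- and, secondarily, verifying the hypotheses of the half-lives-half-dies input when $\beta(L),\beta(J)>0$, so that $H_1(Y;\L_S)$ is not $\L_S$--torsion and the quotient $\hat{t}H_1$ must be carried through the bookkeeping.
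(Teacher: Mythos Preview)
Your overall architecture matches the paper's exactly: take $W=(S^3\times I)\sm\nu A$, show the lateral piece $Z$ (the paper calls it $P$) is $\Omega$-- and $\L_S$--acyclic, analyse the long exact sequences of $(W,X_L)$ and $(W,X_J)$ over $\Omega$ for the rank inequality, and in the equality case produce a metabolizer for the Blanchfield form of $\p W$.

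The gap is precisely where you locate it, in the ``geometric heart.'' The assertion that a transverse double point ``adds exactly one two--handle, the two--handle of the associated crossing change'' is not justified and the phrasing is misleading: double points of a generic immersed concordance do not arise from crossing changes, so there is no associated crossing change, and the inductive removal you sketch does not produce a handle decomposition without substantial further work. The paper sidesteps this completely. Rather than arguing geometrically, it computes $H_*(W;\Z)$ by Mayer--Vietoris for $S^3\times I=W\cup_P\nu A$ (after first computing $H_*(P;\Z)$ from the plumbing description of $P$), obtaining $H_1(W;\Z)\cong\Z^m$ generated by meridians and $\chi(W)=c$. From $H_1(X_L;\Z)\toiso H_1(W;\Z)$ one reads off $H_i(W,X_L;\Z)=0$ for $i=0,1$, and then \cite[Proposition~2.10]{COT03} gives $H_1(W,X_L;\Omega)=0$; the vanishing of $H_3(W,X_J;\Omega)$ follows by Poincar\'e--Lefschetz duality and the same device applied to $(W,\ol{\p W\sm X_J})$. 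The bound on $\dim_\Omega H_2(W;\Omega)$ then comes from $\chi(W)=c$, not from a handle count. This replaces your local--model analysis with a global computation that needs no control on the height function.

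For the metabolizer, the paper does not argue $P=P^\perp$ directly. It invokes Hillman's criterion (Proposition~\ref{prop:hillman-metabolic}): it suffices that $TH_2(W,\p W;\L_S)\to TH_1(\p W;\L_S)\to TH_1(W;\L_S)$ be exact. In the equality case a dimension count shows the $\Omega$--sequence of $(W,\p W)$ splits into two short exact sequences, and then the sharp $3\times 3$ lemma applied to the diagram with rows $T(-)\hookrightarrow(-)\to(-)\otimes\Omega$ gives the required exactness on torsion. Your ``extremality of the connecting maps'' is the right intuition, but this packaging is what makes the $\hat t$--versus--$T$ bookkeeping you worry about disappear.
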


In the following, given $f=f(t_1,\dots,t_m)\in \L$ we write
$\ol{f}:=f(t_1^{-1},\dots,t_m^{-1})$.
Furthermore, we say that a polynomial $n\in \L$ is \emph{negligible}
if it is of the form
\[ n=\pm  \prod_{i=1}^m  t_i^{r_i}\,\,\cdot \,\prod_{i=1}^m  (1-t_i)^{s_i}\]
where $r_i,s_i\in \Z$ for  $i=1,\dots,m$; this is equivalent to saying that $n$ is invertible in $\L_S$.
We can formulate the following straightforward corollary to Theorem \ref{mainthm}.

\begin{corollary}\label{maincor}
Let $L$ and $J$ be  $m$-component links.
If $c(L,J) = |\beta(L) - \beta(J)|$, then $\deltator_L\cdot f\, \ol{f}= \deltator_{J}\cdot g\, \ol{g}\cdot n$ for some non-zero $f,g\in \L$ and some negligible $n\in \L$.
\end{corollary}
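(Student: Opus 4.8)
The plan is to deduce the corollary from Theorem~\ref{mainthm} by passing from linking forms to the orders of the modules underlying them. By Theorem~\ref{mainthm}, the hypothesis $c(L,J)=|\beta(L)-\beta(J)|$ produces a metabolizer $P=P^{\perp}$ for the form $\Bl_L\oplus-\Bl_J$ inside $H:=\hat t H_1(X_L;\L_S)\oplus\hat t H_1(X_J;\L_S)$.

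The first ingredient is the algebraic statement that the order of the module underlying a metabolic nonsingular linking form over $\L_S$ is a norm, up to units of $\L_S$. I would prove this as in the classical one-variable (Fox--Milnor) case: the short exact sequence $0\to P\to H\to H/P\to 0$ of $\L_S$-torsion modules gives $\ord_{\L_S}(H)\doteq\ord_{\L_S}(P)\cdot\ord_{\L_S}(H/P)$; nonsingularity of the form together with $P=P^{\perp}$ identifies $H/P$, up to a pseudo-null submodule, with $\overline{\Hom_{\L_S}(P,\Omega/\L_S)}$; and using $\Hom_{\L_S}(P,\Omega/\L_S)\cong\Ext^1_{\L_S}(P,\L_S)$, the identities $\ord_{\L_S}\Ext^1_{\L_S}(P,\L_S)\doteq\ord_{\L_S}(P)$ and $\ord_{\L_S}(\overline{M})=\overline{\ord_{\L_S}(M)}$, and the triviality of the order of pseudo-null modules, one gets $\ord_{\L_S}(H/P)\doteq\overline{\ord_{\L_S}(P)}$. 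Writing $h:=\ord_{\L_S}(P)$, this gives $\ord_{\L_S}(H)\doteq h\bar h$. Executing this step carefully over the non-principal ring $\L_S$ — that is, verifying that the various pseudo-null correction terms are genuinely invisible to the order function — is the one point where I expect to have to think; the rest is bookkeeping.

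Next I would identify $\ord_{\L_S}(H)$ with $\deltator_L\cdot\deltator_J$ up to a negligible polynomial. Since $H$ is a direct sum, $\ord_{\L_S}(H)\doteq\ord_{\L_S}\bigl(\hat t H_1(X_L;\L_S)\bigr)\cdot\ord_{\L_S}\bigl(\hat t H_1(X_J;\L_S)\bigr)$, and by the construction of the Blanchfield pairing recalled in Section~\ref{section:blanchfield} the module $\hat t H_1(X_L;\L_S)$ differs only by a pseudo-null submodule from $\tor_{\L_S}H_1(X_L;\L_S)=\tor_{\L}H_1(X_L;\L)\otimes_{\L}\L_S$, whose order is the image of $\deltator_L$. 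Since localisation changes orders only by units of $\L_S$, that is, by negligible polynomials, we obtain $\ord_{\L_S}\bigl(\hat t H_1(X_L;\L_S)\bigr)\doteq\deltator_L$ in $\L_S$, and likewise for $J$. Combining, $\deltator_L\cdot\deltator_J=h\bar h\cdot n_0$ for some negligible $n_0$, where $h$ may be chosen in $\L$ and is nonzero because $P$ is torsion.

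Finally I would pass to the asymmetric form of the statement. Because $\Bl_J$ is a nonsingular hermitian form on $\hat t H_1(X_J;\L_S)$ (equivalently, by Poincar\'e--Lefschetz duality for $X_J$), the torsion Alexander polynomial satisfies $\deltator_J\doteq\overline{\deltator_J}$ up to units of $\L$, which are negligible. Multiplying $\deltator_L\cdot\deltator_J=h\bar h\cdot n_0$ by $\deltator_J$ and replacing one of the two left-hand factors $\deltator_J$ by $\overline{\deltator_J}$ at the cost of a negligible unit gives $\deltator_L\cdot\deltator_J\,\overline{\deltator_J}=\deltator_J\cdot h\bar h\cdot n$ for some negligible $n$. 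Setting $f:=\deltator_J$, which is nonzero, and $g:=h$ then yields $\deltator_L\cdot f\bar f=\deltator_J\cdot g\bar g\cdot n$, as required.
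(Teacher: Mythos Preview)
Your argument is correct and follows essentially the same route as the paper. The paper simply observes that the corollary is an immediate consequence of Theorem~\ref{mainthm} together with Lemma~\ref{lem:sameorder} (that $\ord(TM)=\ord(\hat t M)$) and Lemma~\ref{lem:ordernorm} (that the order of a metabolic linking form is a norm), whereas you have effectively reproved those two lemmas inline and then made explicit the final bookkeeping step---multiplying through by $\deltator_J$ and using $\deltator_J\doteq\overline{\deltator_J}$---that the paper leaves to the reader.
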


The inequality  $|\beta(L)-\beta(J)| \leq c(L,J)$ in Theorem \ref{mainthm} and the statement of the corollary are essentially the main result of a recent paper  by Kawauchi \cite{Ka13}. (Kawauchi gives a slightly more precise version of the corollary in so far as he also determines the negligible element $n$.) In the case of single variable Alexander modules, the rank estimate was previously given by Kawauchi in \cite{Ka96}.
The result on Blanchfield forms is to the best of our knowledge a new result.

Our second main theorem gives a refinement of Corollary \ref{maincor} when we replace the clasp number by the Gordian distance. More precisely we have the following theorem.

\begin{theorem}\label{lem:torsionpolynomials}
Let $L$ and $J$ be two $m$-component links.
Then
\[ |\b(L)-\b(J)|\leq g(L,J).\]
Furthermore, if  $\b(J)=\b(L)+g(L,J)$, then
\[  \deltator_L=\deltator_{J}\cdot f\, \ol{f}\cdot n\]
for some $f\in \L$ and some negligible  $n\in \L$.
\end{theorem}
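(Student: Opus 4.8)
The inequality $|\b(L)-\b(J)|\le g(L,J)$ is immediate from the inequality of Theorem~\ref{mainthm} together with $c(L,J)\le g(L,J)$. For the divisibility statement the plan is to factor the $g:=g(L,J)$ crossing changes into single crossing changes and, for each one, to run a relative, single‑handle version of the cobordism argument underlying Theorem~\ref{mainthm}. Pick a diagram of $L$ and a sequence of $g$ crossing changes taking it to a diagram of $J$; this produces links $L=L_0,L_1,\dots,L_g=J$ with $L_{k+1}$ obtained from $L_k$ by one crossing change, so $g(L_k,L_{k+1})=1$ and hence $|\b(L_{k+1})-\b(L_k)|\le c(L_k,L_{k+1})\le 1$. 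As the telescoping sum of these differences equals $\b(J)-\b(L)=g$, every step must satisfy $\b(L_{k+1})=\b(L_k)+1$. It therefore suffices to prove: \emph{if $L'$ is obtained from $L$ by one crossing change and $\b(L')=\b(L)+1$, then $\deltator_L=\deltator_{L'}\cdot f\,\ol f\cdot n$ for some $f\in\L$ and negligible $n$}; multiplying these relations over $k=0,\dots,g-1$ and using that a product of negligible elements is negligible then gives the theorem.

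To prove the single‑crossing‑change statement, realise the crossing change as $\pm 1$--surgery on an unknot $\g\subset X_L=S^3\sm\nu L$ bounding a disc meeting $L$ in two points; the surgered manifold is $X_{L'}$. Let $W$ be the trace of this surgery: a compact oriented $4$--manifold which, rel $\partial(\nu L)\times I$, is built from $X_L$ by a single $2$--handle along $\g$, and equally from $X_{L'}$ by a single dual $2$--handle. Assume first that the two strands at the changed crossing lie on the same component, so $[\g]=0\in\Z^m$ and the epimorphism $\pi_1(X_L)\to\Z^m$ extends over $W$; then $\L_S$--coefficients are defined on all of $W$. Working over $\L_S$, and recalling that $H_0(X_L;\L_S)=0$, that $\rk_{\L_S}H_2(X_L;\L_S)=\b(L)$ by an Euler characteristic count, and that $\ord_{\L_S}\tor_{\L_S}H_1(X_L;\L_S)$ equals $\deltator_L$ up to a unit of $\L_S$ (that is, up to a negligible element, which we absorb into $n$), we have that $H_k(W,X_L;\L_S)$ and $H_k(W,X_{L'};\L_S)$ are each $\cong\L_S$ for $k=2$ and vanish otherwise (each pair being a single $2$--handle). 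Feeding these into the long exact sequences of the two pairs and comparing the two resulting expressions for $\dim_\Omega H_2(W;\Omega)$, the hypothesis $\b(L')=\b(L)+1$ forces the connecting map $\partial\co\L_S\to H_1(X_L;\L_S)$ to have \emph{torsion} cyclic image $I_L$, while $\partial'\co\L_S\to H_1(X_{L'};\L_S)$ is \emph{injective} with free rank‑one image $I_{L'}$; in both cases $H_1(W;\L_S)$ is the quotient of $H_1(X_L;\L_S)$, resp.\ $H_1(X_{L'};\L_S)$, by the respective image. Since $I_L$ is torsion, $\tor_{\L_S}H_1(W;\L_S)=\tor_{\L_S}H_1(X_L;\L_S)/I_L$, so $\ord_{\L_S}\tor_{\L_S}H_1(W;\L_S)$ is $\deltator_L/\ord_{\L_S}(I_L)$ up to a unit; computing the same order from the $X_{L'}$ side, with $F':=H_1(X_{L'};\L_S)/\tor$, gives $\deltator_{L'}\cdot d'$ up to a unit, where $d':=\ord_{\L_S}\tor_{\L_S}(F'/I_{L'})$ records the non‑saturation of the dual‑handle class in the free part of the Alexander module of $L'$. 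Combining, $\deltator_L\doteq\deltator_{L'}\cdot d'\cdot\ord_{\L_S}(I_L)$, with $\doteq$ denoting equality up to a unit of $\L_S$.

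It remains to see that the defect $e:=d'\cdot\ord_{\L_S}(I_L)$ is of the form $f\,\ol f$ up to a unit. The point is that the attaching circle $\g$, as a class in $H_1(X_L;\L_S)$, and the attaching circle of the dual $2$--handle, as a class in $H_1(X_{L'};\L_S)$, are Poincar\'e--Lefschetz dual in $W$; carrying this duality --- which involves the conjugation $t_i\mapsto t_i^{-1}$ --- through the two long exact sequences above should identify $\ord_{\L_S}(I_L)$ with $\ol{d'}$ up to a unit, so that $e\doteq d'\,\ol{d'}$. Equivalently, and in parallel with the metabolicity half of Theorem~\ref{mainthm}: the cobordism $W$, having a single $2$--handle and satisfying $\b(L')=\b(L)+1$, endows the finite defect module with a Hermitian $\Omega/\L_S$--valued linking form which is metabolic, and a metabolic linking form on a torsion $\L_S$--module has order $\doteq g\,\ol g$ with $g$ the order of a metabolizer; either way $e\doteq f\,\ol f$. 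This establishes the single‑crossing‑change statement, and the theorem follows.

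The rank inequality and the homological bookkeeping above are routine; the real work is the last step, namely proving the relative, one‑handle analogue of Theorem~\ref{mainthm} that produces the metabolic linking form on the defect (or, dually, identifies $\ord_{\L_S}(I_L)$ with $\ol{d'}$), so as to conclude that $e$ is a genuine norm $f\,\ol f$ rather than merely conjugation‑symmetric. This is exactly where the rigidity of the trace of a crossing change --- only $2$--handles, rank one at each step --- is used, in contrast to the more flexible immersed‑concordance geometry behind Theorem~\ref{mainthm} and Corollary~\ref{maincor}. A secondary obstacle is the case of a crossing change between two distinct components, where $[\g]\neq 0\in\Z^m$ obstructs extending $\L_S$--coefficients over $W$; this requires a separate treatment (for instance via the coefficient system through which both $X_L$ and $X_{L'}$ factor after the handle attachment), which I expect to contribute only further negligible factors.
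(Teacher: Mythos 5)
Your first paragraph (the inequality and the reduction to a single crossing change with $\b$ jumping by exactly one at every step) matches the paper's argument exactly. The gap is in the core step, the single crossing change. You set up the surgery trace $W$ of the $\pm1$--surgery along $\g$ and do the rank/torsion bookkeeping to get $\deltator_L\doteq\deltator_{L'}\cdot d'\cdot\ord_{\L_S}(I_L)$, but the statement that this defect is a norm is exactly the point you leave at the level of ``should identify $\ord_{\L_S}(I_L)$ with $\ol{d'}$'' / ``I expect''. That relative duality (or the metabolic linking form on the defect module) is not routine: with a single $2$--handle the putative defect module and its $\Omega/\L_S$--valued form have to be constructed and shown nonsingular and metabolic, and nothing in your sketch does this. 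Moreover your construction has a structural problem before that: the coefficient system $\pi_1(X_L)\to\Z^m$ extends over $W$ only when $\phi([\g])=0$, which fails precisely for crossing changes between two \emph{different} components (and also for self-crossings where both strands pierce the disc coherently). These are not a peripheral case --- they are the main case for the splitting-number applications --- and your closing remark that they ``should contribute only further negligible factors'' is unsupported; with no $\L_S$--coefficients on $W$ the whole long-exact-sequence computation collapses.

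The paper avoids both difficulties by never working with the surgery trace. For a single crossing change it invokes the purely algebraic diagram of \cite[Proposition~4.1]{CFP13}: the module $M=H_1$ of the exterior of $L\cup\g$ (where the coefficient system restricts from $X_L$ and from $X_{L'}$ simultaneously, with no extension problem whatsoever, so all types of crossing changes are covered) surjects onto both $H=H_1(X_L;\L)$ and $H'=H_1(X_{L'};\L)$ with cyclic ``kernels''; a rank count shows $f\co\L\to M$ is injective, whence $TM\hookrightarrow TH$ and $TM\twoheadrightarrow TH'$, giving the clean divisibility $\deltator_{L'}\mid\deltator_L$. The norm structure of the quotient is then \emph{not} proved by a new duality argument at all: one simply combines this divisibility with Corollary~\ref{maincor} (already available from Theorem~\ref{mainthm}, since $c\le g$), i.e.\ with $\deltator_L\cdot g\,\ol g=\deltator_{L'}\cdot g'\,\ol{g'}\cdot n$, and a short UFD argument in $\L$ forces the quotient to be $f\,\ol f\cdot n$. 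If you want to salvage your approach, the most economical fix is the same move: prove only the divisibility statement from your setup (which your $TM$-style bookkeeping could deliver if you replace the trace $W$ by the exterior of $L\cup\g$), and let Corollary~\ref{maincor} supply the norm, rather than attempting the relative one-handle analogue of the metabolicity argument.
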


Put differently, since Gordian distance is more specialized than 4-dimensional clasp number, we are able to show that one torsion polynomial divides the other.

\subsection{The splitting number and the weak splitting number}

We now recall and introduce a few more link theoretic notions.
\bn
\item The \emph{unlinking number $u(L)$} of an $m$-component link $L$ is the Gordian distance to the $m$-component unlink.
\item An $m$-component link $L$ is  a \emph{split link} if there are $m$ disjoint balls  in $S^3$ each of which contains a component of $L$.
\item  Following \cite{BS13,CFP13}, the \emph{splitting number $\sp(L)$} of a link~$L$ is the minimal number of crossing changes between \emph{different} components of $L$ required on some diagram of $L$ to obtain a split link, where the minimum is taken over all diagrams.
\item The \emph{weak splitting number} $\wsp(L)$ of a link $L$ is the minimal number of crossing changes required on some diagram of $L$ to produce a split link, where the minimum is taken over all diagrams.
\en
Note that for the weak splitting number, unlike the splitting number considered above, crossing changes of a component with itself are permitted.  For example if~$L$ is the Whitehead link then it is straightforward to see that $\wsp(L)=1$, but an elementary linking number argument (see \cite[Section~2]{CFP13}) shows that $\sp(L)=2$.
Somewhat confusingly the weak splitting number is referred to as the splitting number in \cite{Ad96,Sh12,La14}, but we decided to follow the convention used
by Batson--Seed \cite{BS13}.

It is straightforward to apply Theorem \ref{lem:torsionpolynomials} to the computation of unlinking numbers, splitting numbers and weak splitting numbers.
The precise statements are given in Corollaries \ref{cor:unlink}, \ref{cor:split} and \ref{cor:wsp}. We note that the result on splitting numbers, Corollary \ref{cor:split}, considerably strengthens \cite[Theorem~4.2]{CFP13}.

In Section~\ref{section:examples-unlinking-splitting} we give some examples of the use of these corollaries.
Corollary~\ref{cor:unlink} enables us to easily compute the unlinking numbers of the 3-component links with 9 or fewer crossings.  We also show that some, but not all, of the results on splitting numbers from~\cite{CFP13} which were obtained using covering links, can also be obtained with the algebraic methods of this paper.

\subsection{Knot types from weak splitting operations}
Now we turn to an application of Theorems~\ref{mainthm} and \ref{lem:torsionpolynomials} to weak splitting numbers.
Let us introduce some notation.
If a link $J$ can be obtained from a link $L$ by a sequence of $r$ crossing changes then we write $L \rightsquigarrow_{r} J$.
A sequence of crossing changes culminating in a split link is referred to as a \emph{splitting sequence}.
 Given knots $K_1,\dots,K_m$ we denote the split link comprising these knots by $K_1 \sqcup \dots \sqcup K_m$.
We write $U$ for the unknot throughout this subsection.

In Section~\ref{section:knot-types-weak-splitting} we give a general condition in terms of Blanchfield forms and Alexander polynomials restricting the knot types which can arise from a sequence of crossing changes realising the weak splitting number; see Theorem~\ref{thm:weak-split-no-blanchfield-r-plus-one-component}.

In~\cite{Ad96} Adams gave some examples of 2-component links~$L$ with unknotted components and $\wsp(L)=1$, such that whenever one turns $L$ into a split link using a single crossing change, the resulting split link has a knotted component.  Put differently, for the given link $L$ one has to pay a price for splitting it with one crossing change, i.e.\ one has to turn one of the unknotted components into a non-trivial knot.

Adams then asked whether there are occasions when the price to pay must be `arbitrarily high'.
More precisely, the following question was asked by Adams~\cite[p.~299]{Ad96}.

\begin{question}\label{question:adams}
Let $C$ be a complexity for knots, e.g.\ crossing number, hyperbolic volume, span of some knot polynomial. Given any $c\in \N$, does there exist a $2$-component link $L$ with unknotted components such that for any splitting sequence $L\rightsquigarrow_1 K\sqcup U$ of length one we have $C(K)\geq c$?
\end{question}

We give an affirmative answer to Adams' question for the crossing number.

\begin{theorem}\label{thm:high-crossing-no-knots-from-wsp-one-links}
Fix $c \in \mathbb{N}$.  There exists a 2-component link $L$ with unknotted components such that such for any knot $K$ with $L \rightsquigarrow_1 K \sqcup U$, the crossing number of $K$ is at least $c$.
\end{theorem}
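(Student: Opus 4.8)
The plan is to produce the link $L$ as a connected sum (along one component) of infinitely many carefully chosen building blocks, exploiting Theorem~\ref{thm:weak-split-no-blanchfield-r-plus-one-component} to pin down the Alexander polynomial that any knot $K$ with $L\rightsquigarrow_1 K\sqcup U$ must carry, and then invoke the classical fact that the crossing number of a knot is bounded below by the span (breadth) of its Alexander polynomial. Concretely, one chooses a sequence of $2$-component links $L^{(1)},L^{(2)},\dots$ with unknotted components, each with $\wsp(L^{(k)})=1$, whose Blanchfield forms (or Alexander polynomials) are ``independent'' in the sense needed to run the obstruction, and sets $L=L^{(1)}\# \cdots \# L^{(N)}$ for $N$ large depending on $c$. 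Good candidates for the $L^{(k)}$ are the twisted Whitehead-type links, or more robustly the links built from knots with prescribed Alexander polynomial and a clasp, in the spirit of the examples Adams used; the key is that each $L^{(k)}$ individually forces the cost-knot to have Alexander polynomial divisible by some fixed $p_k\ol{p_k}$ with $p_k$ of positive span, and these contributions stack under connected sum.

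First I would make precise how Theorem~\ref{thm:weak-split-no-blanchfield-r-plus-one-component} specializes to $r=1$ and $m=2$: it says that if $L\rightsquigarrow_1 K\sqcup U$, then the Blanchfield form of $L$ is, up to Witt equivalence and a metabolic summand of the prescribed size, the Blanchfield form of $K\sqcup U$, which is just $\Bl_K$ on the knot factor. Passing to orders via Corollary~\ref{maincor} / Theorem~\ref{lem:torsionpolynomials} this yields a divisibility relation $\deltator_{K\sqcup U}\cdot g\ol{g}\cdot n = \deltator_L$, i.e.\ $\Delta_K(t)$ (suitably normalized) divides a fixed polynomial $\Delta_L$ up to units and an extra square factor $g\ol g$. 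The divisibility by itself is not enough — $\Delta_K$ could be trivial — so the real content is to engineer $L$ so that the obstruction forces $\Delta_K$ to be \emph{nontrivial of large span}. This is where the connected-sum construction does the work: each block $L^{(k)}$ is designed so that splitting it with a single crossing change, no matter which crossing, must deposit a factor of span $\geq 1$ into the cost-knot, and because crossing changes on the connected sum localize (one crossing change affects essentially one block, while the others remain and must still be split in a longer sequence — here one needs to be careful, since $\wsp$ of the connected sum may exceed $1$).

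The main obstacle, and the point that needs the most care, is exactly this locality/additivity issue: $\wsp(L^{(1)}\#\cdots\# L^{(N)})$ need not be $1$, so the hypothesis $L\rightsquigarrow_1 K\sqcup U$ may be vacuous for a naive connected sum. The fix is to choose a single link $L$ (not a connected sum) whose Blanchfield form or Alexander module is itself large and ``rigid'' enough that the one-crossing-change obstruction of Theorem~\ref{thm:weak-split-no-blanchfield-r-plus-one-component} forces $\Delta_K$ to have span $\geq c$: for instance, take $L$ to be a $2$-component link with unknotted components, $\wsp(L)=1$, and $\Delta_L$ equal to a fixed polynomial of span $\approx 2c$ that has no ``negligible'' or square factorization through a polynomial of small span — a polynomial with an appropriate irreducibility or prime-power structure (e.g.\ $\Delta_L=\Phi(t)^{2}$ for a cyclotomic or otherwise inert $\Phi$ of span $c$, arranged so that any factorization $\deltator_{K\sqcup U}\cdot g\ol g\cdot n=\deltator_L$ forces $\mathrm{span}\,\Delta_K\geq c$). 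Then one must exhibit such an $L$ explicitly with $\wsp(L)=1$: build it by taking the unknot $U$ and a second unknotted circle $U'$, and clasping them together through a pattern realizing the desired Alexander module after one band/crossing modification — this is a concrete but delicate piece of link surgery, and verifying $\wsp(L)=1$ (the upper bound) is done by exhibiting the single crossing change, while verifying the lower bound on $\mathrm{span}\,\Delta_K$ is pure algebra from Theorem~\ref{lem:torsionpolynomials}. Finally, invoking $\mathrm{span}\,\Delta_K \le 2\,\mathrm{genus}(K) \le \dots \le \mathrm{crossing number of }K$ (or more directly, $\mathrm{span}\,\Delta_K < \mathrm{cr}(K)$ for nontrivial $K$) converts the span bound into the crossing-number bound claimed, completing the proof.
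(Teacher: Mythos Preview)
Your overall framework is right --- use Theorem~\ref{lem:torsionpolynomials} to constrain $\Delta_K$, then bound crossing number from below by the span of $\Delta_K$ --- but the execution has two genuine gaps.

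First, your proposed algebraic structure does not work. You suggest arranging $\Delta_L = \Phi(t)^2$ with $\Phi$ cyclotomic, so that the relation $\Delta_K \cdot g\ol g \cdot n = \Delta_L$ forces $\Delta_K$ to have large span. But cyclotomic polynomials are symmetric, so $\Phi\,\ol\Phi = \Phi^2$, and the factorisation $\Delta_K = 1$, $g = \Phi$ is perfectly admissible. An even exponent of an irreducible symmetric factor gives no obstruction at all. What the paper does instead is arrange that $\Delta_L$ contains an irreducible symmetric factor $\Delta_J$ to an \emph{odd} power. This is achieved not by computing $\Delta_L$ directly but by first exhibiting one explicit splitting $L \rightsquigarrow_1 J \sqcup U$ with $\Delta_J$ irreducible of degree $2n$; Theorem~\ref{lem:torsionpolynomials} then gives $\Delta_L = \Delta_J \cdot f\ol f \cdot m$, so the $\Delta_J$--exponent in $\Delta_L$ is $1 + 2a_f$, odd. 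Equating with $\Delta_L = \Delta_K \cdot g\ol g \cdot m'$ for any other splitting forces the $\Delta_J$--exponent of $\Delta_K$ to be odd, hence positive, hence $\Delta_J \mid \Delta_K$.

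Second, you do not actually construct the link, and this is where most of the paper's work lies. The paper invokes Kondo's theorem \cite{Kon79} to produce an unknotting-number-one knot $J$ with $\Delta_J$ a prescribed irreducible symmetric polynomial of degree $2n$, then uses an Adams-style clasp construction to build a two-component link $L_T$ with unknotted components and $L_T \rightsquigarrow_1 J \sqcup U$. One must also verify $\Delta_{L_T} \neq 0$ (so that $\beta(L_T)=0$ and Theorem~\ref{lem:torsionpolynomials} applies); the paper does this by computing the Sato--Levine invariant. Your proposal acknowledges that ``one must exhibit such an $L$ explicitly'' but supplies neither the construction nor the verification that $\beta(L)=0$, and without these there is no proof.
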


Next we give a quick summary of the proof of Theorem~\ref{thm:high-crossing-no-knots-from-wsp-one-links}.
Given $c\in \N$ we combine constructions from \cite{Ad96} and \cite{Kon79}
to obtain a link $L$ with  $L \rightsquigarrow_1 J \sqcup U$ where $J$ is a knot such that the degree of $\Delta_J(t)$ is $2n$ and $\Delta_J$ is irreducible, chosen so that $n$ is suitably high with respect to $c$.  Then we find that for any $K$ as in Theorem~\ref{thm:high-crossing-no-knots-from-wsp-one-links}, we have $\Delta_J | \Delta_K$, so that the degree of $\Delta_K$ is forced to be at least $2n$.  The theorem follows since the degree of the Alexander polynomial gives a lower bound  on the crossing number.

In fact, it is straightforward to modify the proof  of Theorem \ref{thm:high-crossing-no-knots-from-wsp-one-links}
to give an affirmative answer to Adams' question for the  support of knot Floer homology and the 3-genus as the complexity, since the Alexander polynomial provides a lower bound for these as well.
\medskip

The paper is organized as follows.
In Section \ref{section:blanchfield} we recall the definitions and basic properties of the Alexander module and the Blanchfield form of a link.
In Section \ref{section:clasp} we provide the proof of Theorem \ref{mainthm} and Corollary \ref{maincor}. In Section \ref{section:gordian} we will prove Theorem \ref{lem:torsionpolynomials} and we will state several corollaries relating Alexander polynomials to the unlinking number, the splitting number and the weak splitting number of a link.
In Section~\ref{section:examples-unlinking-splitting} we give examples of unlinking and splitting number computations.
In Section~\ref{section:knot-types-weak-splitting} we investigate weak splitting numbers and the knot types arising from them; in particular we give the proof of Theorem~\ref{thm:high-crossing-no-knots-from-wsp-one-links}.

\subsection*{Conventions.} All rings are commutative and all modules are finitely generated.
Links are oriented and ordered.

\subsection*{Acknowledgment.}
Work on this paper was supported by the SFB 1085 `Higher Invariants' at the Universit\"at Regensburg funded by the DFG.
This paper has roots in joint work of two of us with Jae Choon Cha, and we wish to thank him, and also Patrick Orson, for many helpful conversations. We
would also like to thank Lorenzo Traldi for helpful comments on the first version of the article.

\section{The Alexander module and the Blanchfield form}
\label{section:blanchfield}

\subsection{Alexander modules}
Throughout the paper we identify the group ring of $\Z^m$ with the multivariable Laurent polynomial ring $\L:=\Z[t_1^{\pm 1},\dots, t_m^{\pm 1}]$ in the canonical way. (We suppress $m$ from the notation, but it will always be clear from the context which $m$ we mean.)
We denote  by $f\mapsto \ol{f}$ the involution on $\L$ which is given by the unique $\Z$-linear ring homomorphism with $t_i\mapsto t_i^{-1}$, $i=1,\dots,m$.
Furthermore, given a $\L$-module $V$ we denote  the $\L$-module with involuted $\L$-module structure by $\ol{V}$, i.e.\ the underlying additive group of $\ol{V}$ is the same as for $V$, but the action of $f$ on $\ol{V}$ is defined as the action of $\ol{f}$ on $V$.

Throughout the paper we will mostly be interested in the following $\L$-modules:
\bn
\item the ring $\L$ itself,
\item the ring $\L_S := \Z[t_1^{\pm 1},\dots, t_m^{\pm 1},(t_1-1)^{-1},\dots, (t_m-1)^{-1}]$, which  is  the multivariable Laurent polynomial ring with the monomials $t_i-1$ inverted,
\item the quotient field   $\Omega$ of $\L$, which is also the quotient field of $\L_S$.
\en

In the following let $X$ be a connected manifold and let $\varphi\co \pi_1(X)\to \Z^m$ be a homomorphism.
We denote the cover corresponding to $\varphi$ by $p\co \wti{X}\to X$. Given $Y\subset X$ we write $\wti{Y}:=p^{-1}(Y)$. Note that the group $\Z^m$ acts by deck transformations on $C_*(\wti{X},\wti{Y})$ on the left. Thus we can view $C_*(\wti{X},\wti{Y})$ as a (left) $\Z[\Z^m]=\L$-module.
Now let $M$ be a (left) $\L$-module. Then we consider the following  (right) $\L$-modules:
\[ \ba{rcl} H_i(X,Y;M)&:=&H_i\left(\ol{C_*(\wti{X},\wti{Y})}\otimes_{\L}M\right),\mbox{ and }\\[2mm]
H^i(X,Y;M)&:=&H_i\left(\hom_{\L}(C_*(\wti{X},\wti{Y}),M)\right).\ea \]
As usual we write $ H_i(X;M)= H_i(X,\emptyset;M)$ and $ H^i(X;M)= H^i(X,\emptyset;M)$.

Now let $L$ be an $m$-component  link.
We denote the exterior of $L$ by  $X_L$.
Note that $\pi_1(X_L)$ admits a canonical epimorphism $\phi_L$ onto $\Z^m$ which is given by sending the $i$-th oriented meridian to the $i$-th vector of the standard basis of $\Z^m$.  In the following we will refer to $H_1(X_L;\L)$ as the  \emph{Alexander module of $L$}.

We recall several basic properties of twisted homology and cohomology groups.
The following lemma is well-known; see e.g.\ \cite[Section~VI.3]{HS71}.

\begin{lemma} \label{lem:h0}
Let  $X$ be a connected CW-complex and let $\varphi\co \pi_1(X)\to \Z^m$ be a homomorphism. Then for any $\L$-module $M$ we have
\[  H_0(X;M)\cong  M/\Big\{  \sum_{i=1}^m \varphi(g_i)v_i-v_i\,|\,v_1,\dots,v_m\in M\mbox{ and } g_1,\dots,g_m\in\pi_1(X)\Big\}.\]
In particular, if $M$ is a field and $\varphi$ is non-trivial, then $H_0(X;M)=0$.
\end{lemma}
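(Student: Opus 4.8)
The plan is to compute $H_0(X;M)$ directly from the cellular chain complex of the universal-type cover $\wti{X}$, using the standard description of $C_*(\wti{X})$ as a complex of free $\L$-modules.

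First I would recall that since $X$ is a connected CW-complex, we may take its cellular chain complex and lift cells to $\wti X$; then $C_0(\wti X)$ is a free $\L$-module on the $0$-cells of $X$ and $C_1(\wti X)$ is free on the $1$-cells, with $H_0(\wti X) \otimes_\L M$ computed by the cokernel of $\partial_1 \otimes \id_M \colon C_1(\wti X)\otimes_\L M \to C_0(\wti X)\otimes_\L M$. (The overline/involution on $C_*$ changes left modules to right modules but does not affect the isomorphism type of $H_0$, so I will suppress it.) Reducing to a CW-structure with a single $0$-cell, so that $C_0(\wti X)\cong \L$ and $C_0(\wti X)\otimes_\L M \cong M$, the boundary of a lifted $1$-cell corresponding to a loop $g\in\pi_1(X)$ is $(\varphi(g)-1)\cdot(\text{basepoint})$, using that deck transformations act by $\varphi(g)$. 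Hence $H_0(X;M)$ is $M$ modulo the submodule generated by all elements $\varphi(g)v - v$ for $g$ ranging over generators of $\pi_1(X)$ and $v\in M$; since $\varphi$ factors through $\Z^m$ and $\Z^m$ is generated by $m$ elements, it suffices to let $g$ range over $m$ elements $g_1,\dots,g_m$ whose images generate $\Z^m$, which gives exactly the stated formula. The reference \cite[Section~VI.3]{HS71} can be cited for the identification of $H_0$ of a module over a group ring with the coinvariants, i.e. the statement that $H_0(X;M)$ is the module of coinvariants $M_{\Z^m}=M/\langle gv-v\rangle$ pulled back along $\varphi$.

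For the ``in particular'' clause, suppose $M$ is a field (a $\L$-module structure on a field $M$ is a ring homomorphism $\L\to M$, equivalently an assignment of units $t_i\mapsto u_i\in M^\times$) and $\varphi$ is non-trivial. Then some $\varphi(g_j)$ acts on $M$ as multiplication by a unit $u\neq 1$, so the element $uv-v=(u-1)v$ lies in the relation submodule for every $v\in M$; since $u-1\neq 0$ and $M$ is a field, $(u-1)$ is invertible, so the relation submodule is all of $M$, whence $H_0(X;M)=0$.

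The only mildly delicate point is bookkeeping around the involution $\ol{C_*(\wti X,\wti Y)}$ and the left/right module conventions in the definition of $H_i(X;M)$, and making sure the reduction to a CW-complex with one $0$-cell is legitimate (it is, up to homotopy equivalence, which does not change twisted homology). Everything else is a routine unwinding of definitions, so I do not expect a real obstacle here; this lemma is genuinely standard and I would keep the proof short, citing \cite{HS71} for the coinvariants computation and only spelling out the field case.
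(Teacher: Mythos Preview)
The paper does not give a proof of this lemma; it simply records it as well-known and cites \cite[Section~VI.3]{HS71}. Your argument is the standard coinvariants computation and is correct, so there is nothing to compare against.

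One small caveat on the ``in particular'' clause: your parenthetical identifies a $\L$-module structure on a field $M$ with a ring homomorphism $\L\to M$ sending $t_i\mapsto u_i\in M^\times$, and then asserts that $\varphi$ non-trivial forces some $\varphi(g)$ to act as a unit $u\neq 1$. That last step needs the composite $\Z^m\to\L^\times\to M^\times$ to be injective, i.e.\ the structure map $\L\to M$ to be injective; otherwise (e.g.\ $M=\Q$ with every $t_i\mapsto 1$) the conclusion fails. In the paper this clause is only ever invoked with $M=\Omega$, where injectivity is automatic, so this is really an imprecision in the statement rather than a gap in your proof; you might simply add the hypothesis that $\L\hookrightarrow M$ when writing it up.
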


The following theorem is a well-known instance of Poincar\'e duality and the Universal Coefficient Theorem.

\begin{theorem} \label{thm:pd}
Let  $X$ be a connected oriented $k$-manifold and let $\varphi\co \pi_1(X)\to \Z^m$ be a homomorphism. Let $\partial X=Y_0\cup Y_1$ be a decomposition of the boundary into two submanifolds with $\partial Y_0=\partial Y_1$.  Then for any $\L$-module $M$ we have an isomorphism
\[  \op{PD}\co H_i(X,\partial Y_0;M)\xrightarrow{\cong}  \ol{H^{k-i}(X,\partial Y_1;M)}.\]
In particular, if $M=\Omega$ is the quotient field of $\L$, then
$H_i(X,\partial Y_0;\Omega)\cong  H_{k-i}(X,\partial Y_1;\Omega)$.
\end{theorem}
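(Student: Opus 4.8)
The plan is to invoke twisted Poincaré--Lefschetz duality with coefficients in the $\L$-module $M$, followed by the universal coefficient spectral sequence, and to keep careful track of the conjugation in the module structure. First I would recall that for an oriented $k$-manifold $X$ with boundary split as $\partial X = Y_0 \cup Y_1$ along $\partial Y_0 = \partial Y_1$, cap product with the fundamental class $[X, \partial X] \in H_k(X,\partial X;\Z)$ of the universal (or $\Z^m$-) cover gives, at the level of the chain complex $C_*(\wti X, \wti Y_0)$ of $\L$-modules, a chain homotopy equivalence to the dual complex $\hom_\L\big(C_{k-*}(\wti X, \wti Y_1), \L\big)$, after an appropriate conjugation of the $\L$-action. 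Tensoring with $M$ (or applying $\hom_\L(-,M)$) then yields, on homology, an isomorphism $H_i(X,\partial Y_0; M) \cong \ol{H^{k-i}(X,\partial Y_1;M)}$, where the overline accounts for the involution built into our conventions for twisted (co)homology, as set up immediately above the statement. This is the content of, e.g., the treatment in Wall or in the standard references on duality in covering spaces; since the problem says I may cite earlier material, I would simply cite a source for this general duality statement rather than reproving it.

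The second half, the special case $M = \Omega$, then follows by combining the duality isomorphism with the universal coefficient theorem over the field $\Omega$. Since $\Omega$ is the quotient field of $\L$, and $\L$ is a (Noetherian) integral domain, $\hom_\L(C_*(\wti X,\wti Y_1),\Omega)$ computes the $\Omega$-dual of $H_*(X,\partial Y_1;\Omega)$: there are no higher $\ext$ terms because $\Omega$ is flat (indeed injective, being a field) over $\L$ when we pass to $\Omega$-modules, so $H^{k-i}(X,\partial Y_1;\Omega) \cong \hom_\Omega\big(H_{k-i}(X,\partial Y_1;\Omega),\Omega\big)$. Each $H_{k-i}(X,\partial Y_1;\Omega)$ is a finite-dimensional $\Omega$-vector space (the chain complex of the finite CW pair has finite rank over $\L$), hence is (non-canonically) isomorphic to its $\Omega$-dual. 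Finally, the conjugation $\ol{\,\cdot\,}$ is harmless at this level: $\ol{V}$ and $V$ have the same underlying additive group and the same $\Omega$-dimension, so $\ol{H^{k-i}(X,\partial Y_1;\Omega)} \cong H^{k-i}(X,\partial Y_1;\Omega) \cong H_{k-i}(X,\partial Y_1;\Omega)$, giving the stated $\Omega$-isomorphism.

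The main obstacle, and the only genuinely delicate point, is bookkeeping rather than mathematics: one must be consistent about which boundary piece ($Y_0$ versus $Y_1$) appears on which side, and exactly where the involution $f \mapsto \ol f$ enters, since our definitions of $H_i(X,Y;M)$ and $H^i(X,Y;M)$ already bake in one conjugation (the $\ol{C_*(\wti X,\wti Y)}$ in the homology definition). The cleanest way to handle this is to fix the convention that the deck group acts on the left on $C_*(\wti X,\wti Y)$, note that cap product with the equivariant fundamental class is conjugate-linear, and verify that the two conjugations — the one in our definition of twisted homology and the one coming from the duality pairing — combine to produce exactly the single overline in the statement. Once this is pinned down, the rest is a formal consequence of standard duality and universal coefficients, so I would present that verification compactly and cite the general framework for the underlying chain-level duality.
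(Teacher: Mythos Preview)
Your proposal is correct and follows exactly the approach the paper indicates: the paper does not actually give a proof of Theorem~\ref{thm:pd}, but introduces it as ``a well-known instance of Poincar\'e duality and the Universal Coefficient Theorem,'' which is precisely the two-step argument (equivariant Poincar\'e--Lefschetz duality via cap product, followed by UCT over the field $\Omega$) that you outline. Your attention to the conjugation bookkeeping is appropriate and goes beyond what the paper records.
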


\subsection{Ranks and orders of modules}
Let $R$ be a domain with quotient field $Q$. Let $M$ be an $R$-module.
We then refer to $\rk_R(M):=\dim_Q(M\otimes_R Q)$ as the \emph{rank} of $M$.
Now suppose that $R$ is in fact a UFD and that $M$ is finitely generated.
We pick a resolution
\[ R^k\xrightarrow{A} R^{\,l}\to M\to 0\]
with $k\in \N\cup \{\infty\}$. After adding possibly columns of zeros we can and will assume that $k\geq l$. The \emph{order $\ord(A)$ of $M$} is defined as the greatest common divisor of the $l\times l$-minors of $A$. Note that the order is well-defined up to multiplication by a unit in $R$; see \cite{CF77} for details.
Also note that $\ord(A)\ne 0$ if and only if $\rk(A)=0$.

For future reference we record the following lemma.  A proof can be found in~\cite[Chapter~3.3]{Hi12}.

\begin{lemma}\label{lem:ordermultiplicative}
Let $R$ be a UFD and let
\[ 0\to A\to B\to C\to 0 \]
be a short exact sequence of finitely generated $R$-modules. Then
\[ \ord(B)=\ord(A)\cdot \ord(C).\]
\end{lemma}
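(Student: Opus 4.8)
The plan is to reduce the statement to a computation with presentation matrices and the multiplicativity of greatest common divisors of minors. First I would choose finite free presentations
\[ R^{k}\xrightarrow{\,P\,} R^{l}\to A\to 0,\qquad R^{k'}\xrightarrow{\,Q\,} R^{l'}\to C\to 0,\]
which exist because $A$ and $C$ are finitely generated over the Noetherian ring $R$ (a UFD), and one may enlarge the source by appending zero columns so that the minors picking out $\ord(A)$ and $\ord(C)$ are taken at sizes $l$ and $l'$ respectively, with $k\ge l$, $k'\ge l'$. The key step is to assemble from these a presentation of the middle term $B$: lifting the generators of $C$ through the surjection $B\to C$ and combining with the image of the generators of $A$ gives a generating set of size $l+l'$ for $B$, and I would show that a presentation matrix for $B$ can be taken in the block lower-triangular form
\[ \begin{pmatrix} P & 0 \\ * & Q \end{pmatrix}, \]
where the lower-left block $*$ records how the lifted relations of $C$ fail to hold in $B$, i.e.\ expresses them in terms of the generators coming from $A$. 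This is the standard construction of a presentation of an extension from presentations of the sub- and quotient modules; the exactness of $0\to A\to B\to C\to 0$ is exactly what makes it go through.

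Given the block form, the remaining point is purely about minors: I would argue that the generalized Laplace (Cauchy--Binet type) expansion along the first $l$ rows shows that every maximal ($(l+l')$-sized, after trimming to square or, more precisely, $(l+l')\times(l+l')$) minor of the block matrix is, up to sign, a product of an $l\times l$ minor drawn from the top block strip $(P\ 0)$—which necessarily only involves columns of $P$—and an $l'\times l'$ minor of $(*\ Q)$. Conversely, products of an $l\times l$ minor of $P$ with an $l'\times l'$ minor of $Q$ arise among the minors of the block matrix (choose the corresponding columns, using columns of $Q$ for the bottom strip). Taking gcd's over all such minors and using that in a UFD $\gcd$ distributes over products in the appropriate sense—$\gcd(\{p_\alpha q_\beta\}) = \gcd(\{p_\alpha\})\cdot\gcd(\{q_\beta\})$ when the two index families are independent—yields $\ord(B)=\ord(A)\cdot\ord(C)$.

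The main obstacle I anticipate is bookkeeping rather than conceptual: verifying carefully that the block presentation of $B$ genuinely computes the order (that is, that the lifted generators really do give a presentation with the claimed matrix, and that no spurious smaller minors contribute), and handling the degenerate cases cleanly—when $A$ or $C$ has positive rank (so one of the orders, and hence the order of $B$, is zero, consistent with $\rk(B)=\rk(A)+\rk(C)$), and when one of the modules is free of full rank or zero so that $l=0$ or $l'=0$. Since the paper points to \cite[Chapter~3.3]{Hi12} for the details, I would keep the argument at the level of this sketch and cite the reference for the verification that the order is independent of the chosen presentation and for the minor-theoretic lemma on gcd's of products.
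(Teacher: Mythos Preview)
The paper does not give its own proof of this lemma; it simply records the statement and cites \cite[Chapter~3.3]{Hi12}. Your sketch via a block-triangular presentation of $B$ is the standard route and is essentially what one finds in such references, so there is nothing to compare against.

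Two points in your sketch deserve more care, however. First, a maximal minor of the block matrix is in general a \emph{sum} of products of an $l\times l$ minor from the top strip with an $l'\times l'$ minor from the bottom strip (Laplace expansion), not a single such product up to sign. With your form $\begin{pmatrix} P & 0\\ * & Q\end{pmatrix}$ this still shows that every maximal minor is divisible by $\ord(A)=\gcd(l\text{-minors of }P)$, and your ``conversely'' observation does give $\ord(B)\mid\ord(A)\cdot\ord(C)$; together these yield $\ord(A)\mid\ord(B)\mid\ord(A)\cdot\ord(C)$. But the missing divisibility $\ord(A)\cdot\ord(C)\mid\ord(B)$ does not drop out of the minor calculus alone: the $l'\times l'$ complementary minors live in the strip $(\,*\ \ Q\,)$, and there is no reason these are each divisible by $\ord(C)$, so your appeal to $\gcd(\{p_\alpha q_\beta\})=\gcd(\{p_\alpha\})\cdot\gcd(\{q_\beta\})$ does not apply as stated. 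The standard way to close this gap is to reduce to the torsion case (rank is additive, so if any term has positive rank both sides vanish) and then localise at each height-one prime $\mathfrak p$: over the DVR $R_{\mathfrak p}$ the statement becomes additivity of length in a short exact sequence, and a nonzero element of a UFD is determined up to units by its valuations at height-one primes. Second, a minor quibble: a UFD need not be Noetherian, so finite presentations are not automatic; but since the paper explicitly allows $k\in\N\cup\{\infty\}$ in its definition of order this is harmless.
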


In the following, given $f,g\in R$ we write $f\doteq g$ if $f=ug$ for some unit $u\in R$. We will mostly be interested in the rings $R=\L$ and $R=\L_S$.
Note that the units in $\L=\Z[t_1^{\pm 1},\dots, t_m^{\pm 1}]$ are precisely the monomials $\pm t_1^{n_1}\dots t_m^{n_m}$. Furthermore, the units in
\[\L_S = \Z[t_1^{\pm 1},\dots, t_m^{\pm 1},(t_1-1)^{-1},\dots, (t_m-1)^{-1}]\]
are precisely the products of monomials and powers of $t_i-1$, $i=1,\dots,m$.
Put differently, the  units in  $\L_S$   are the negligible elements from the introduction.

Given an $m$-component  link $L$ we write
\[\b(L):=\rk_{\L}(H_1(X_L;\L)).\]
Note that $X_L$ is compact, so in particular $X_L$ is homeomorphic to a finite CW-complex, which in turn implies that the cellular chain complex $C_*(\wti{X_L})$ is finitely generated over~$\L$. Since $\L$ is Noetherian it follows that $H_1(X_L;\L)$ and $\tor_{\L} H_1(X_L;\L)$ are finitely generated $\L$-modules.
We refer to
\[\Delta_L:=\ord(H_1(X_L;\L))\]
as the \emph{Alexander polynomial of $L$}. Furthermore, we refer to
\[\deltator_L:=\ord(\tor_{\L} H_1(X_L;\L))\]
as the \emph{torsion Alexander polynomial of~$L$}.

Recall that an $m$-component link is \emph{split} if there exist $m$ disjoint 3-balls in $S^3$, each of which contains a component of $L$.
Later on we will need the following well-known lemma.

\begin{lemma} \label{lem:alexsplit}
Let $L$ be a split $m$-component link. Then $\b(L)=m-1$ and $\deltator_L\doteq \Delta_{L_1}(t_1)\cdot \ldots \cdot \Delta_{L_m}(t_m)$.
\end{lemma}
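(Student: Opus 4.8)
The plan is to compute the Alexander module of a split link from the Mayer–Vietoris sequence (or equivalently from the fact that the exterior of a split link is built by gluing), and then extract both the rank and the torsion order. First I would set up the geometry: if $L = L_1 \sqcup \dots \sqcup L_m$ is split, then $S^3$ contains disjoint balls $B_1, \dots, B_m$ with $L_i \subset B_i$, and $X_L$ is obtained by gluing the knot exteriors $X_{L_i} = B_i \setminus \nu L_i$ (each containing $L_i$ in its interior ball $B_i$) along a connected "pair of pants" region $Y = S^3 \setminus (\nu L \cup \bigcup_i \mathrm{int}\, B_i)$, which is $S^3$ with $m$ open balls removed, i.e.\ homotopy equivalent to a wedge of $(m-1)$ circles, crossed with nothing essential — the key point is that $Y$ deformation retracts onto a $1$-complex whose fundamental group maps to $\Z^m$ trivially, since the meridians all live in the $X_{L_i}$ pieces. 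Actually the cleanest route: $X_L$ is homotopy equivalent to the one-point union (along an arc, then a point) of the $X_{L_i}$, more precisely $X_L \simeq X_{L_1} \vee \dots \vee X_{L_m}$ in the sense that it is obtained by connecting the $m$ exteriors by $(m-1)$ arcs (a "string of beads"). I would make this homotopy equivalence precise and note that under $\phi_L$ the restriction to each $X_{L_i}$ is the abelianization $\pi_1(X_{L_i}) \to \Z = \Z \cdot t_i \subset \Z^m$, while the connecting arcs contribute nothing.

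Next I would run the homology computation with $\L$-coefficients. Writing $X_L$ as the union of the pieces, a Mayer–Vietoris / cofiber argument gives $H_1(X_L; \L)$ in terms of $H_1(X_{L_i}; \L)$ and $H_0$ of the intersection pieces. Each intersection is a point (or arc), which is simply connected, so $H_0$ with $\L$-coefficients of a point is just $\L$, and $H_0(X_{L_i};\L)$: here one must be careful, since the restriction of $\phi_L$ to $X_{L_i}$ is the map to the single factor $\Z\cdot t_i$, so by Lemma~\ref{lem:h0}, $H_0(X_{L_i};\L) \cong \L/(t_i - 1)\L$, which is nonzero (it is $\Z[t_1^{\pm 1},\dots,\widehat{t_i^{\pm1}},\dots,t_m^{\pm1}]$). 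Assembling the Mayer–Vietoris sequence over the $(m-1)$ gluings, the $H_0$ terms contribute: one copy of $\L/(t_i-1)$ disappears against a point's $\L$ at each gluing except the last, and what survives is a free summand of rank $m-1$ coming from the connecting $\L$'s modulo the images, together with $\bigoplus_{i=1}^m H_1(X_{L_i};\L)$ where $H_1(X_{L_i};\L)$ is the Alexander module of the knot $L_i$ regarded as a $\L$-module via $t_i$. Concretely I would argue $H_1(X_L;\L) \cong \L^{m-1} \oplus \bigoplus_{i=1}^m \big(H_1(X_{L_i};\Z[t_i^{\pm1}]) \otimes_{\Z[t_i^{\pm1}]} \L\big)$, with the first summand accounting for the rank: $\rk_\L H_1(X_L;\L) = m-1$ since each knot Alexander module is $\Z[t_i^{\pm1}]$-torsion, hence its extension to $\L$ is $\L$-torsion (the extension by the flat map $\Z[t_i^{\pm1}] \hookrightarrow \L$ preserves torsion-ness since $\Delta_{L_i}(t_i)$ is a nonzerodivisor in $\L$).

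Then for the torsion order: from the above decomposition, $\tor_\L H_1(X_L;\L) \cong \bigoplus_{i=1}^m \big(H_1(X_{L_i};\Z[t_i^{\pm1}]) \otimes_{\Z[t_i^{\pm1}]} \L\big)$, and by Lemma~\ref{lem:ordermultiplicative} the order is the product of the orders of the summands. The order of $H_1(X_{L_i};\Z[t_i^{\pm1}])$ over $\Z[t_i^{\pm1}]$ is $\Delta_{L_i}(t_i)$ by definition, and one checks that $\ord_\L$ of the $\L$-module obtained by base change along $\Z[t_i^{\pm1}] \hookrightarrow \L$ is still $\Delta_{L_i}(t_i)$ (take a presentation matrix of $H_1(X_{L_i};\Z[t_i^{\pm1}])$ — it is square after adding zero columns since the module is torsion — and observe its determinant, computed over $\L$, is unchanged). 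Multiplying over $i = 1,\dots,m$ yields $\deltator_L \doteq \prod_{i=1}^m \Delta_{L_i}(t_i)$. The main obstacle I anticipate is the bookkeeping in the Mayer–Vietoris step: getting the rank-$(m-1)$ free part to emerge cleanly and verifying there is no extra torsion contributed by the $H_0(X_{L_i};\L) \cong \L/(t_i-1)$ terms interacting with the connecting maps — this requires checking that the relevant connecting homomorphisms $H_1$ of a point $\to H_0$ of the intersections are handled so that the surviving cokernel is exactly free of rank $m-1$. An alternative that sidesteps some of this is to induct on $m$, splitting off one component at a time and using Lemma~\ref{lem:ordermultiplicative} together with a two-piece Mayer–Vietoris, which I would likely prefer for the writeup.
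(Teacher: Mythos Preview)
Your approach via Mayer--Vietoris on the splitting-ball decomposition is essentially the paper's. Two inaccuracies to fix: the complement $C = \overline{S^3 \setminus \bigcup_i B_i}$ is homotopy equivalent to a wedge of $(m-1)$ copies of $S^2$, not $S^1$ (though all you need is $H_1(C;\L)=0$ and $H_0(C;\L)=\L$, which follow from $\pi_1(C)=1$); and $X_L$ is \emph{not} homotopy equivalent to the wedge $\bigvee_i X_{L_i}$, since they differ in $H_2$ (e.g.\ for the $2$-component unlink $X_L \simeq S^1 \vee S^1 \vee S^2$). Neither error affects the $H_0$/$H_1$ portion of the sequence, and the paper avoids both by gluing directly along the spheres $\partial B_i$ rather than passing to a wedge model. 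Your anticipated bookkeeping worry is real but harmless: the quotient $K$ you obtain is a torsion-free submodule of $\L^m$ of rank $m-1$, so the short exact sequence $0 \to \bigoplus_i M_i \to H_1(X_L;\L) \to K \to 0$ already gives $\b(L)=m-1$ and $\tor_\L H_1(X_L;\L) \cong \bigoplus_i M_i$ without needing $K$ to be free or the sequence to split.
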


\begin{proof}
We just provide a short sketch of the well-known proof.
By our hypothesis there exist disjoint balls $B_1,\dots,B_m$ in $S^3$, such that
$L_i\subset B_i$, $i=1,\dots,m$.  We write $S_i=\partial B_i$, $i=1,\dots,m$ and $C:=\ol{S^3\sm \cup_{i=1}^m B_i}$.

Note that $H_1(S_i;\L)=H_1(C;\L)=0$ and  $H_0(S_i;\L)=H_0(C;\L)=\L$.
Also, a straightforward argument shows that for $j=0,1$ and $i\in \{1,\dots,m\}$ we have
\[ H_j(B_i\sm \nu L_i)\cong H_j(S^3\sm \nu L_i;\L)\cong H_j(S^3\sm \nu L_i;\Z[t_i^{\pm 1}])\otimes_{\Z[t_i^{\pm 1}]}\L.\]
It follows easily from Lemma \ref{lem:h0} that $H_0(S^3\sm \nu L_i;\Z[t_i^{\pm 1}])\cong
\Z[t_i^{\pm 1}]/(t_i-1)\Z[t_i^{\pm 1}]$. Furthermore it follows from the definitions that $M_i:=H_1(S^3\sm \nu L_i;\Z[t_i^{\pm 1}])$ is a module whose order equals $\Delta_{L_i}(t_i)$.

The  Mayer--Vietoris sequence with $\L$ coefficients corresponding to the decomposition $S^3\sm \nu L=\bigcup_{i=1}^m (B_i\sm \nu L_i)\,\cup \, C$ gives rise to the following exact sequence:
\[\xymatrix@R-0.8cm @C-0.1cm{
0\ar[r]&\bigoplus_{i=1}^m M_i\otimes_{\Z[t_i^{\pm 1}]}\L\ar[r]& H_1(S^3\sm \nu L;\L)\ar[r]& \L^m\ar[r]&\\
\ar[r]& \L\oplus \bigoplus_{i=1}^m \L/(t_i-1)\L\ar[r]& H_0(S^3\sm \nu L;\L)\ar[r]& 0.
}\]
By Lemma \ref{lem:h0} the module  $H_0(S^3\sm \nu L;\L)$ is $\L$-torsion.
The lemma  now follows immediately from the above observations and from elementary properties of ranks and orders. We leave the details to the reader.
\end{proof}

\subsection{The maximal pseudo-null submodule}\label{section:pseudo-null-submodule}
Given a ring $R$ and a  module $M$ over $R$ we denote the torsion submodule by $TM$.
Furthermore we denote the \emph{maximal pseudo-null submodule of $M$} by  $zM$; this is the submodule of $TM$
which is generated by the elements of $M$  whose annihilator is not contained in any principal ideal of $R$.  Following \cite[Section~2.3]{Hi12} we write
\[\hat{t}M := TM/zM.\]

For future reference we record the following lemma, see  \cite[Theorem~3.5]{Hi12}.

\begin{lemma}\label{lem:sameorder}
For any $\L$-module $M$ we have $ \ord(TM)=\ord(\hat{t}M)$.
\end{lemma}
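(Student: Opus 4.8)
The plan is to prove $\ord(TM) = \ord(\hat tM)$ by showing that the kernel of the quotient map, namely the maximal pseudo-null submodule $zM$, has trivial order, and then invoking multiplicativity of orders over short exact sequences. Concretely, there is a short exact sequence of finitely generated $\L$-modules
\[ 0 \to zM \to TM \to \hat tM \to 0, \]
so by Lemma~\ref{lem:ordermultiplicative} (applied with the UFD $R = \L$) we have $\ord(TM) = \ord(zM) \cdot \ord(\hat tM)$. Thus the whole statement reduces to the claim that $\ord(zM) \doteq 1$, i.e.\ that a pseudo-null module over $\L$ has order a unit.

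The main step is therefore to verify that $\ord(N) \doteq 1$ for any finitely generated pseudo-null $\L$-module $N$. Here I would unwind the definition: $N$ is generated by elements whose annihilator is not contained in any principal ideal, which for a Noetherian UFD like $\L$ is equivalent to saying that $\Ann(N)$ has height at least $2$, equivalently that $N$ is supported in codimension $\geq 2$. I would pick a presentation matrix $A$ with $\L^k \xrightarrow{A} \L^{\,l} \to N \to 0$ and $k \geq l$; then $\ord(N)$ is the gcd of the $l \times l$ minors $E_l(A)$, while the codimension of the support of $N$ is controlled by which Fitting ideals $E_j(A)$ are the unit ideal. Since the support has codimension at least $2$, the first Fitting ideal $E_l(A) = (\ord(N))$ cannot be contained in any height-one prime (such a containment would force the support to have a codimension-one component), and since $\L$ is a UFD this forces $\ord(N)$ to be a unit. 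Alternatively, and perhaps more cleanly, if $p$ is any irreducible element of $\L$ and $p \mid \ord(N)$, then localizing at the height-one prime $(p)$ would give a nonzero torsion $\L_{(p)}$-module, contradicting that every element of $N$ has annihilator not contained in $(p)$; hence no irreducible divides $\ord(N)$, so $\ord(N) \doteq 1$.

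I expect the main obstacle to be purely bookkeeping: making the passage between "annihilator not contained in a principal ideal", "support of codimension $\geq 2$", and "order is a unit" fully rigorous, in particular confirming that $zM$ is finitely generated (so that Lemma~\ref{lem:ordermultiplicative} applies) — this follows since $\L$ is Noetherian and $zM$ is a submodule of the finitely generated module $TM$, which itself is finitely generated because $M$ is. Since the paper explicitly cites \cite[Theorem~3.5]{Hi12} for this lemma, I would in fact present the argument at the level of the sketch above, citing Hillman's book for the structural fact that pseudo-null modules have unit order and combining it with Lemma~\ref{lem:ordermultiplicative}, rather than redeveloping the localization argument in full.
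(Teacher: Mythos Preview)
Your proposal is correct, and in fact goes further than the paper: the paper does not give its own proof of this lemma at all, but simply records it and cites \cite[Theorem~3.5]{Hi12}. Your sketch --- using the short exact sequence $0 \to zM \to TM \to \hat tM \to 0$, multiplicativity of orders, and the fact that pseudo-null modules over a Noetherian UFD have unit order --- is exactly the standard argument underlying that reference, so there is nothing to compare beyond noting that you have supplied what the paper outsources.
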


\subsection{Linking forms}\label{section:linking-forms}
Let $R$ be a ring with (possibly trivial) involution. We denote the quotient field of $R$ by $Q$. Here and throughout the paper we extend the involution on $R$ to an involution on $Q$.
Let  $\lambda\co H\times H\to Q/R$ be a map.
\begin{itemize}
\item We say $\l$ is \emph{sesquilinear} if $\l(au+bv,w)=\ol{a}\l(u,w)+\ol{b}\l(v,w)$ and
$\l(u,av+bw)=\l(u,v)a+\l(u,w)b$  for all $a,b\in R$ and $u,v,w\in H$.
\item We say $\l$ is \emph{hermitian} if $\lambda(u,v)=\ol{\lambda(v,u)}$ for all  $u,v\in H$.
\item We  say that $\l$ is \emph{nonsingular} if the assignment $u\mapsto (v\mapsto \l(u,v))$ defines an isomorphism of $R$-modules $\ol{H}\to \hom(H,Q/R)$.
\item A \emph{linking form over $R$} is an $R$-module $H$ together with a hermitian sesquilinear nonsingular form $\lambda\co H\times H\to Q/R$.
\item We say that two linking forms $\lambda \colon H \times H \to Q/R$ and $\mu \colon G \times G \to Q/R$ are \emph{isomorphic} if there is an isomorphism of $R$ modules $f \colon H \toiso G$ such that $\lambda(v,w) = \mu(f(v),f(w))$ for all $v,w\in H$.
\item We say that the linking form \emph{$\l$ is metabolic} if $\l$ admits a \emph{metabolizer}, i.e.\ a submodule $P$ of $H$ with $P=P^\perp:=\{v\in H\,|\, \lambda(p,v)=0\mbox{ for all }p\in P\}$.
\item Given a linking form  $\lambda\co H\times H\to Q/R$  we write
$-\lambda\co H\times H\to Q/R$ for the linking form which is defined by $(-\l)(v,w)=-\l(v,w)$ for all $v,w\in H$.
\item Given two linking forms  $\lambda\co H\times H\to Q/R$ and  $\lambda'\co H'\times H'\to Q/R$
we refer to
\[ \ba{rcl} \l\oplus \l'\co (H\oplus H')\times (H\oplus H') &\to & Q/R \\
((v\oplus v'),(w\oplus w'))&\mapsto & \l(v,w)+\l'(v',w') \ea \]
as the \emph{Witt sum of $\l$ and $\l'$}.
\item We say that two linking forms $(H,\lambda)$ and $(G,\mu)$ are \emph{Witt equivalent},
written as $(H,\lambda) \sim (G,\mu)$,  if there exist metabolic forms $(P,\varphi)$ and $(Q,\phi)$ such that
$$(H,\lambda) \oplus (P,\varphi) \cong (G,\mu) \oplus (Q,\phi).$$
\item The \emph{Witt group of linking forms over $R$} is defined as the set of Witt equivalence  classes of linking forms.
The identity element of the Witt group is the equivalence class of the linking form on the zero module and the inverse of $[(H,\lambda)]$ is given by $[(H,-\lambda)]$.
\end{itemize}

For the record we state the following well-known lemma which is proved in \cite[Lemma~3.26]{Hi12}, for example.

\begin{lemma}\label{lem:ordernorm}
Let $R$ be a UFD with (possibly trivial) involution.
If $\lambda\co H\times H\to Q/R$ is a metabolic linking form,   then
$\ord(H) \doteq f \cdot  \ol{f}$ for some $f\in R$.
\end{lemma}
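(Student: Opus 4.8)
The plan is to reduce the statement to an elementary fact about nonsingular hermitian forms over discrete valuation rings, by localising at height-one primes. First I would dispose of the trivial cases: if $R$ is a field then $Q/R=0$ and there is nothing to prove, and if $H$ has positive rank then $\ord(H)=0$ and we may take $f=0$; so assume $R$ is not a field and $H$ is a finitely generated torsion module, so that $\ord(H)\neq 0$. Then I would recall the description of the order via localisation: for a prime element $p\in R$ the ring $R_{(p)}$ is a discrete valuation ring with uniformiser $p$, the module $M_{(p)}:=M\otimes_R R_{(p)}$ has finite length $\ell_p(M)$ for every finitely generated torsion $R$-module $M$, and $\ord(M)\doteq \prod_p p^{\ell_p(M)}$, the product running over a set of representatives of the associate classes of prime elements (a finite product, since $\ord(M)$ is a nonzero element of the UFD $R$). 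Writing $\ol p$ for the prime obtained by applying the involution, it therefore suffices to exhibit non-negative integers $e_p$, one for each associate class of primes, satisfying $e_p+e_{\ol p}=\ell_p(H)$ for all $p$; then $f:=\prod_p p^{e_p}$ works, because $v_p(\ol f)=v_{\ol p}(f)=e_{\ol p}$.

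The first step towards producing the $e_p$ is the symmetry $\ell_p(H)=\ell_{\ol p}(H)$, valid for every prime $p$. This comes from nonsingularity alone: the adjoint of $\lambda$ is an isomorphism $\ol H\cong \Hom_R(H,Q/R)$, and localising at $(p)$ — using that localisation commutes with $\Hom$ out of a finitely presented module and that $(Q/R)_{(p)}\cong Q/R_{(p)}$ is the injective hull of the residue field of $R_{(p)}$ — Matlis duality over the DVR $R_{(p)}$ shows that $\Hom_R(H,Q/R)_{(p)}$ has length $\ell_p(H)$, while unravelling the twisted module structure shows that $\ol H$ localised at $(p)$ has length $\ell_{\ol p}(H)$. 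Hence, for a pair of non-associate conjugate primes $p\not\doteq\ol p$, I may simply set $e_p:=\ell_p(H)$ and $e_{\ol p}:=0$; the required identity then holds at both $(p)$ and $(\ol p)$ precisely because $\ell_p(H)=\ell_{\ol p}(H)$.

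The remaining, and crucial, case is that of a self-conjugate prime $p$, where $\ol p\doteq p$ and I must show that $\ell_p(H)$ is even and then set $e_p:=\ell_p(H)/2$; this is the only place the metabolizer is used. Here the involution of $R$ preserves $R\setminus(p)$, hence extends to $R_{(p)}$, and localising the form gives a nonsingular hermitian linking form $\lambda_{(p)}\colon H_{(p)}\times H_{(p)}\to Q/R_{(p)}$ over the DVR $R_{(p)}$ for which $P_{(p)}$ is a metabolizer (exactness of localisation identifies $(P^\perp)_{(p)}$ with the orthogonal complement of $P_{(p)}$). Since $Q/R_{(p)}$ is an injective $R_{(p)}$-module, being divisible over a PID, the restriction map $\Hom_{R_{(p)}}(H_{(p)},Q/R_{(p)})\to \Hom_{R_{(p)}}(P_{(p)},Q/R_{(p)})$ is surjective; combining this with nonsingularity of $\lambda_{(p)}$ and the condition $P_{(p)}=P_{(p)}^\perp$, the adjoint of $\lambda_{(p)}$ descends to an isomorphism $H_{(p)}/P_{(p)}\cong \ol{\Hom_{R_{(p)}}(P_{(p)},Q/R_{(p)})}$. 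As the involution preserves length and duality into the injective cogenerator preserves length, $H_{(p)}/P_{(p)}$ and $P_{(p)}$ have equal length, so $\ell_p(H)=2\,\mathrm{length}(P_{(p)})$ is even, which finishes the argument. I expect the main obstacle to be the routine but slightly delicate bookkeeping that localisation is compatible with orthogonal complements and with the adjoint map; the rest — Matlis duality over a DVR, injectivity of $Q/R_{(p)}$, additivity of length — is entirely standard. One could instead try to run the duality argument globally, invoking that $\ord$ is multiplicative on $0\to P\to H\to H/P\to 0$, but the global adjoint isomorphism $H/P\cong \ol{\Hom_R(P,Q/R)}$ fails without injectivity of $Q/R$, which is exactly why passing to discrete valuation rings is needed.
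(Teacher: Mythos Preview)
The paper does not supply its own proof of this lemma; it simply records the statement as well known and cites \cite[Lemma~3.26]{Hi12}. Your localisation argument is correct and is essentially the standard route (and close in spirit to Hillman's): reduce to discrete valuation rings, where additivity of length and Matlis duality are available, use nonsingularity alone to obtain the symmetry $\ell_p(H)=\ell_{\ol p}(H)$, and then use the metaboliser only at self-conjugate primes to force $\ell_p(H)$ to be even. The bookkeeping you flag as the ``main obstacle'' --- that localisation is compatible with the adjoint and with orthogonal complements --- is indeed routine, and your observation that the global argument via $0\to P\to H\to H/P\to 0$ founders on the non-injectivity of $Q/R$ is exactly the reason one passes to DVRs.

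One small caveat: your appeals to ``localisation commutes with $\Hom$ out of a finitely presented module'' (and implicitly to $P$ being finitely presented when you localise the orthogonal-complement condition) assume finite presentation, which is automatic once $R$ is Noetherian but is not part of the hypotheses as stated. Since every application in the paper has $R=\L$ or $R=\L_S$, both Noetherian, this is harmless in context; if you want the lemma in the generality stated you should either add a Noetherian hypothesis or argue a little more carefully at that step.
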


\subsection{The Blanchfield form}
In this section we sketch the definition of Blanchfield forms for 3-manifolds and we summarize a few key properties.  We refer to \cite[Section~2]{Hi12} for a thorough treatment of Blanchfield forms.

Let $N$ be a $3$-manifold (with possibly nonempty boundary) and let $\varphi\co \pi_1(N)\to \Z^m$ be a homomorphism, which induces a homomorphism $\Z[\pi_1(N)] \to \Z[\Z^m] = \L$.  Recall that $\Omega$ is the quotient field of $\L$.  Let $R$ be a subring of $\Omega$ that contains $\L$
and that is closed under involution.
We denote the composition of the following sequence of $R$-module homomorphisms by $\Phi$:
\[\ba{rcl} \ol{TH_1(N;R)}\xrightarrow{(1)} \ol{TH_1(N,\partial N;R)}&\xrightarrow{(2)}& TH^2(N;R)\\
&\xrightarrow{(3)}& \ext^1_R(H_1(N;R),R)\\
&\xrightarrow{(4)}& \ext^1_R(TH_1(N;R),R)\\
&\xrightarrow{(5)}& \hom_R(TH_1(N;R),\Omega/R).\ea\]
Here we used the following maps:
\bn
\item the inclusion induced map;
\item the Poincar\'e duality given in Theorem \ref{thm:pd};
\item  the Universal Coefficient Spectral Sequence (see \cite[Section~2.1~and~2.4]{Hi12}) gives rise to a pair of exact sequences as in the following diagram, where $V$ is some $R$-module.
\[\xymatrix @R-0.3cm @C-0.3cm {&0 \ar[d]&& \\
&\coker\left(\Ext^0_R(H_1(N,R),R) \to \Ext^2_R(H_0(N,R),R)\right)\hskip -3.5cm\ar[d] & & \\
 0 \ar[r] & V \ar[r] \ar[d] & H^2(N;R)\ar[r] & \Ext^0_R(H_2(N,R),R) \\
 & \ker\left(\Ext^1_R(H_1(N,R),R) \to \Ext^3_R(H_0(N,R),R)\right)\hskip -3cm\ar[d] & & \\
&0&& }\]
Since $\Ext^0_R(H_2(N,R),R) = \hom_R(H_2(N,R),R)$ is torsion-free we obtain a map $TH^2(N;R)\to V$, which we then compose with the map
\[V \to \ker\left(\Ext^1_R(H_1(N,R),R)\to \Ext^3_R(H_0(N,R),R)\right)\,  \hookrightarrow \,\Ext^1_R(H_1(N,R),R).\]
\item The map induced by the restriction from $H_1(N;R)$ to $TH_1(N;R)$.
\item The Bockstein long exact sequence arising from the short exact sequence of coefficients $0 \to R \to \Omega \to \Omega/R \to 0$:
\[\xymatrix @R-0.8cm @C-0.2cm {
\ar[r]&\ext^0_{R}(TH_1(N;R),\Omega) \ar[r] &\ext^0_{R}(TH_1(N;R),\Omega/R)\ar[r]&\\
\ar[r]& \ext^1_{R}(TH_1(N;R),R)\ar[r] &\ext^1_{R}(TH_1(N;R),\Omega)\ar[r]&}\]
has first and last terms vanishing, the first since $TH_1(N;R)$ is $R$-torsion and the last since $\Omega$ is an injective $R$-module.  Thus there is a canonical  map $\ext^1_R(TH_1(N;R),R)\to \ext^0_{R}(TH_1(N;R),\Omega/R) = \hom_R(TH_1(N;R),\Omega/R)$.
\en
Hillman \cite[Chapter~2]{Hi12} shows that if $H_i(\partial N;R)=0$ for $i=0,1$ and $R$ contains $\L_S$,
then the resulting map
\[ \ba{rcl}  TH_1(N;R)\times TH_1(N;R)&\to &\Omega/R\\
(x,y)&\mapsto &\Phi(x)(y)\ea \]
descends to a linking form
\[ \Bl_N\co \hat{t}H_1(N;R)\times \hat{t}H_1(N;R)\to \Omega/R\]
that we refer to as the \emph{$R$-Blanchfield form of $N$}.

Later on we will make use of the following proposition that is proved on \cite[p.~40]{Hi12}. (See also \cite[Proposition~2.8]{Le00}.)

\begin{proposition}\label{prop:hillman-metabolic}
Let $N$ be a closed $3$-manifold and let $\varphi\co \pi_1(N)\to \Z^m$ be a homomorphism. Let $R$ be a subring of $\Omega$ which contains $\L_S$.
 Suppose that there exists a $4$-manifold $W$ with $\partial W=N$ such that $\varphi$ extends over $\pi_1(W)$. We write
\[ P:=\im\left\{ TH_2(W,N;R)\xrightarrow{\partial} TH_1(N;R)\to \hat{t}H_1(N;R)\right\}.\]
If the sequence
\[  TH_2(W,N;R)\xrightarrow{\partial} TH_1(N;R)\to  TH_1(W;R)\]
is exact, then $P^\perp$ is a metabolizer for the $R$-Blanchfield pairing of $N$.
\end{proposition}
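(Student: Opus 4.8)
The plan is to follow the standard duality argument for Blanchfield pairings of $4$-manifold boundaries, the kind of computation carried out by Hillman and in Letsche's paper, but adapted to the present setting where $R \supseteq \L_S$ and we work with $\hat{t}H_1$ rather than all of $TH_1$. The key organising principle is that $P = P^{\perp}$ has two halves: one inclusion ($P \subseteq P^\perp$) follows formally from the fact that the Blanchfield pairing of a closed manifold bounding a $4$-manifold vanishes on the image of $TH_2(W,N;R)$; the other inclusion ($P^\perp \subseteq P$) requires a size/order count, which is exactly where the exactness hypothesis on $TH_2(W,N;R) \to TH_1(N;R) \to TH_1(W;R)$ is used. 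So first I would set up the commutative ladder relating the long exact sequence of the pair $(W,N)$ with $R$-coefficients to its Poincar\'e--Lefschetz dual, using Theorem~\ref{thm:pd} with $k=4$, $Y_0 = N$, $Y_1 = \emptyset$ on one side and $Y_0 = \emptyset$, $Y_1 = N$ on the other, together with the Universal Coefficient machinery recalled in step~(3) of the construction of $\Bl_N$.

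Second, I would prove $P \subseteq P^\perp$. Take $x = \partial a$ and $y = \partial b$ with $a, b \in TH_2(W,N;R)$; one shows $\Bl_N(x,y) = 0$ by a diagram chase: the element $\Phi(x)(y) \in \Omega/R$ lifts, via the duality ladder, to something pulled back from $H_2(W,N;R)$ along a map that factors through $H^2(W;R) \to H^2(N;R)$, and the composite $H^2(W;R)\to H^2(N;R)\to \Omega/R$-valued pairing evaluated on boundaries is zero because the relevant class comes from $W$ and dies when restricted to $N$. This is the usual ``the intersection form on the $4$-manifold kills the boundary linking form'' argument; after passing to the quotient $\hat{t}H_1(N;R) = TH_1(N;R)/zH_1(N;R)$ nothing changes since we only need vanishing in $\Omega/R$.

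Third, the harder inclusion $P^\perp \subseteq P$. The idea is to compare orders. Nonsingularity of $\Bl_N$ (guaranteed because $R \supseteq \L_S$ and $H_i(\partial N; R) = 0$ vacuously, $N$ being closed) identifies $P^\perp$ with the kernel of $\hat{t}H_1(N;R) \to \hom_R(P, \Omega/R)$, so $\ord(\hat{t}H_1(N;R)) \doteq \ord(P) \cdot \ord(P^\perp / (P \cap P^\perp)) \cdot (\text{something})$; combined with $P \subseteq P^\perp$ this reduces the claim to showing $\ord(P) \doteq \ord(\hat{t}H_1(N;R)/P)$, i.e.\ that $P$ is ``half'' of $\hat{t}H_1$. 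Here is where the exactness hypothesis enters: it gives $\hat{t}H_1(N;R)/P \cong \im(TH_1(N;R) \to TH_1(W;R))/(\text{pseudo-null noise})$, and then Poincar\'e--Lefschetz duality for $W$ (Theorem~\ref{thm:pd} again, relating $H_1(W;R)$ to $\ol{H^3(W,N;R)}$, hence via UCSS to $\ext$-groups of $H_*(W,N;R)$) together with the exact sequence of the pair identifies this quotient, up to order, with the submodule $P$ itself. The bookkeeping is done with Lemmas~\ref{lem:ordermultiplicative} and \ref{lem:sameorder}, the latter letting us ignore the passage between $TH_1$ and $\hat{t}H_1$ at the level of orders.

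I expect the main obstacle to be the careful handling of the Universal Coefficient Spectral Sequence and the maximal pseudo-null submodule simultaneously: the duality isomorphisms of Theorem~\ref{thm:pd} are clean, but threading them through the $\ext$-terms of step~(3) while keeping track of which maps are isomorphisms, which are merely injective with pseudo-null cokernel, and which genuinely lose information, is delicate. In particular one must check that dividing out $zH_1(N;R)$ is compatible with the duality ladder and does not disturb the order count — this is precisely what Lemma~\ref{lem:sameorder} is for, but applying it at each stage requires care. The $4$-manifold $W$ is allowed to be arbitrary (no simple-connectivity, no control on $H_*(W)$), so all arguments must be purely homological-algebraic, which is what makes the order-counting the crux rather than the geometry.
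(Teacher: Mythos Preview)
The paper does not give its own proof; it defers to Hillman's book (second edition, p.~40) and to \cite{Kim14}. So there is no detailed argument to compare against here, but the shape of the Hillman--Kim proof is known and differs from your plan at the decisive step.

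Your step two, $P \subseteq P^\perp$, is fine and standard. The problem is step three. You are aiming to prove $P = P^\perp$ by an order count, but notice that the proposition only claims that $P^\perp$ (not $P$) is a metabolizer. This is not accidental: in the multivariable case ($m \geq 2$) the ring $\L_S$ has Krull dimension at least $2$ and is not a PID, so equal orders do not force equality of submodules. Even if you succeed in showing $\ord(P) \doteq \ord(\hat{t}H_1(N;R)/P)$, the most this buys you over such a ring is that $P^\perp/P$ has unit order, i.e.\ is pseudo-null --- not that it vanishes. Your own phrases ``up to order'' and ``modulo pseudo-null noise'' already concede exactly the slack that blocks the conclusion $P^\perp \subseteq P$.

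There are two ways to repair this. The first stays with your approach: once you know $P \subseteq P^\perp$ and $P^\perp/P$ is pseudo-null, you can argue directly that $\Bl_N$ vanishes on $P^\perp \times P^\perp$. Indeed, for $x,y \in P^\perp$ the annihilator $\mathfrak a$ of $[y] \in P^\perp/P$ has height $\geq 2$, so contains coprime elements $a,b$; since $ay, by \in P$ one gets $a\Bl_N(x,y) = b\Bl_N(x,y) = 0$ in $\Omega/R$, and a UFD argument forces $\Bl_N(x,y) = 0$. This gives $P^\perp \subseteq (P^\perp)^\perp$, and combined with $(P^\perp)^\perp \subseteq P^\perp$ (which follows from $P \subseteq P^\perp$) you are done. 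This extra step is missing from your proposal and is where the statement's careful phrasing ``$P^\perp$ is a metabolizer'' earns its keep.

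The second route, which is what Hillman and Kim actually do, bypasses order counts entirely: one builds exactly the commutative ladder you describe in your first paragraph, with the adjoint of $\Bl_N$ as the middle vertical, and reads off $(P^\perp)^\perp = P^\perp$ directly from the diagram. The exactness hypothesis makes the top row exact at $TH_1(N;R)$; the UCSS edge maps have pseudo-null defect, so after passing to $\hat{t}$ the outer verticals become isomorphisms, and a diagram chase identifies $P^\perp$ with its own annihilator. No appeal to Lemma~\ref{lem:ordermultiplicative} or Lemma~\ref{lem:sameorder} is needed for this part.
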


The proof of this is contained in the proof of \cite[Theorem~2.4]{Hi12}.  However the situation of his Theorem 2.4 is different to ours, in that the 4-manifold $Z$ in \cite{Hi12} is the exterior of a concordance between two links.  Nevertheless the part of his proof on page 40, verbatim except for $(Z,\partial Z)$ replaced with $(W,N)$, provides a proof of Proposition~\ref{prop:hillman-metabolic}.  There was a slight problem with this part of the proof in the first edition of Hillman's book, therefore the reader is advised to consult the second edition.  A more detailed version of the proof is also given in~\cite[Section~5.1]{Kim14}.
\smallskip

Now let $L$ be an $m$-component link. It is straightforward to see that $H_*(\partial X_L;\L_S)=0$. We then refer to
\[ \Bl_L:=\Bl_{X_L} \colon \hat{t}H_1(X_L;\L_S) \times \hat{t}H_1(X_L;\L_S) \to \Omega/\L_S\]
as the \emph{Blanchfield form of $L$}.
Given an $m$-component link $L$ we denote by $\Bl_{L_i}(t_i)$ the linking form  which is given by tensoring the
Blanchfield form of the knot $L_i$ over the ring $\Z[t_i^{\pm 1}]$ up to the ring $\L_S$.
Now we can formulate the following well-known lemma, which can be viewed as a generalization of  Lemma~\ref{lem:alexsplit}.

\begin{lemma} \label{lem:blanchfield-split}
Let $L$ be a split $m$-component link. Then $\b(L)=m-1$ and $\Bl_L\sim \Bl_{L_1}(t_1)\oplus \ldots \oplus \Bl_{L_m}(t_m)$.
\end{lemma}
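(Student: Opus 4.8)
The plan is to mimic the proof of Lemma~\ref{lem:alexsplit} but to keep track of the extra structure carried by the Blanchfield form, using Proposition~\ref{prop:hillman-metabolic} as the main engine. First I would set up the same geometric decomposition: choose disjoint balls $B_1,\dots,B_m$ with $L_i\subset B_i$, put $S_i=\partial B_i$ and $C=\ol{S^3\sm\bigcup B_i}$, so that $X_L=\bigcup_{i=1}^m(B_i\sm\nu L_i)\cup C$. The rank statement $\b(L)=m-1$ is already covered by Lemma~\ref{lem:alexsplit}, so the content is the Witt-equivalence. Since $H_*(S_i;\L_S)=0$ and $C$ has the $\L_S$-homology of a point (indeed, with $\L_S$ coefficients $H_0(C;\L_S)$ is torsion by Lemma~\ref{lem:h0} and $H_*$ vanishes in higher degrees after the relevant localization), the Mayer--Vietoris sequence should identify $\hat t H_1(X_L;\L_S)$, up to pseudo-null junk that dies after passing to $\hat t(-)$ (using Lemma~\ref{lem:sameorder} and the definition of $\hat t$), with $\bigoplus_{i=1}^m \hat t H_1(B_i\sm\nu L_i;\L_S)\cong\bigoplus_{i=1}^m \hat t H_1(S^3\sm\nu L_i;\Z[t_i^{\pm1}])\otimes_{\Z[t_i^{\pm1}]}\L_S$, which by definition is the module underlying $\Bl_{L_1}(t_1)\oplus\cdots\oplus\Bl_{L_m}(t_m)$.

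Next I would promote this module-level splitting to a statement about linking forms. The cleanest route is probably not to compare the two Blanchfield forms directly, but to use Proposition~\ref{prop:hillman-metabolic}: form the closed $3$-manifold $N:=X_L\sqcup -(X_{L_1}(t_1)\sqcup\cdots)$... more precisely, build $N$ by gluing $X_L$ to the disjoint union of the knot exteriors $X_{L_i}$ along the torus boundary components, or equivalently consider the double $\partial W$ of an appropriate $4$-manifold $W$. The natural $W$ to use is the one obtained by attaching $2$- and $3$-handles realizing the (obvious) fact that a split link is split: concretely, $S^3\times I$ with the balls $B_i\times\{1\}$ removed, or a standard cobordism from $X_L$ to $\bigsqcup X_{L_i}$. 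One checks that $\phi_L$ extends over $\pi_1(W)$, and that the exactness hypothesis $TH_2(W,N;\L_S)\xrightarrow{\partial}TH_1(N;\L_S)\to TH_1(W;\L_S)$ holds — this is again a Mayer--Vietoris / excision computation for the split decomposition, where $W$ deformation retracts onto something whose $\L_S$-homology is controlled. Proposition~\ref{prop:hillman-metabolic} then yields a metabolizer for the $\L_S$-Blanchfield form of $N$, i.e.\ for $\Bl_L\oplus\bigl(-\bigl(\Bl_{L_1}(t_1)\oplus\cdots\oplus\Bl_{L_m}(t_m)\bigr)\bigr)$, which is precisely the assertion that $\Bl_L\sim\Bl_{L_1}(t_1)\oplus\cdots\oplus\Bl_{L_m}(t_m)$.

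The main obstacle I expect is bookkeeping at the two boundary/localization steps: verifying that the relevant $\L_S$-homology groups of $C$, the $S_i$, and the cobordism $W$ vanish or are torsion as needed, and that the Mayer--Vietoris connecting maps behave so that the pseudo-null contributions disappear upon applying $\hat t$. One must be careful that the torsion submodule and the maximal pseudo-null submodule interact well with the direct-sum decomposition and with tensoring up from $\Z[t_i^{\pm1}]$ to $\L_S$; Lemmas~\ref{lem:sameorder} and~\ref{lem:ordermultiplicative} handle the order-theoretic side, but one genuinely needs the homological input that $C$ and $S_i$ contribute nothing. A secondary subtlety is choosing $W$ so that both the "extends over $\pi_1(W)$" condition and the exactness condition in Proposition~\ref{prop:hillman-metabolic} hold simultaneously; the product cobordism coming from the definition of "split" is the natural candidate, and I would verify the hypotheses there. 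As in the proof of Lemma~\ref{lem:alexsplit}, many of the routine diagram chases can reasonably be left to the reader once the structure is in place.
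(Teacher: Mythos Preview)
Your plan is sound and would prove the lemma, but it takes a genuinely different route from the paper's own argument. The paper does not invoke Proposition~\ref{prop:hillman-metabolic} or any $4$-dimensional cobordism at all. Instead, after using the Mayer--Vietoris argument of Lemma~\ref{lem:alexsplit} to identify the torsion part of $H_1(X_L;\L)$ with the direct sum of the (tensored-up) Alexander modules of the components, the paper simply observes that \emph{the Blanchfield form of a $3$-manifold $N$ is isomorphic to that of $N$ with a $3$-ball removed}. Since each knot exterior $X_{L_i}$ is obtained from $B_i\sm\nu L_i$ by gluing back a $3$-ball along $S_i=\partial B_i$, this one observation immediately upgrades the module splitting to a splitting of Blanchfield forms, and in fact yields an honest isomorphism $\Bl_L\cong\bigoplus_i\Bl_{L_i}(t_i)$, not merely a Witt equivalence.

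By contrast, your approach builds a $4$-dimensional cobordism $W$ from $X_L$ to $\bigsqcup_i X_{L_i}$ (e.g.\ by attaching $3$-handles along the separating spheres $S_i$ and a $4$-handle) and then applies the metabolic criterion of Proposition~\ref{prop:hillman-metabolic}. This is very much in the spirit of the proof of Theorem~\ref{mainthm}, and it works, but it is considerably heavier machinery for the situation at hand: you have to close up along tori, verify the exactness hypothesis of Proposition~\ref{prop:hillman-metabolic}, and then only conclude Witt equivalence. The paper's $3$-ball trick sidesteps all of that bookkeeping and gives a stronger conclusion. That said, your route has the pedagogical virtue of being a warm-up for the main theorem, and it avoids needing to know (or prove) the $3$-ball invariance of the Blanchfield form.
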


\begin{proof}
We only provide a sketch of the proof.
The proof of Lemma~\ref{lem:alexsplit} shows that the torsion part of $H_1(X_L;\L)$ is the direct sum of the Alexander modules of the components, tensored up into $\L$.  Each component lives in a 3-ball.  The lemma follows from the observation that the Blanchfield form of a 3-manifold $N$ is isomorphic to that of $N$ with a 3-ball removed. We leave the details to the reader. \end{proof}

\subsection{Brief review of Reidemeister torsion}\label{section:review-reidemeister-torsion}

In this section we remind the reader of the definition and some basic properties of Reidemeister torsion, which we shall apply later to compute $\L$ coefficient homology.
For a comprehensive and readable introduction the reader is referred to~\cite{Tu01}.

Let $(C_*,\{c_i\})$ be a based finite chain complex of finitely generated free $\Omega$-modules. If $C_*$ is not acyclic, then we define $\tau(C_*,\{c_i\})=0$.
Otherwise we pick a basis $b_i$  of each $B_i := \im(\partial_i \colon C_{i+1} \to C_i)$, and we pick a lift $\wt{b}_{i-1}$  of $b_{i-1}$ to $C_i$. The \emph{Reidemeister torsion} of $(C_*,\{c_i\})$ is defined as
\[\tau(C_*,\{c_i\}) := \prod \, \det([b_i\wt{b}_{i-1}/c_i])^{(-1)^{i+1}} \in \Omega^{\times} = \Omega\sm\{0\},\]
where $[d/e]$ is the change of basis matrix between bases $d$ and $e$.  The torsion is independent of the choices of $b_i$ and of the lifts $\wt{b}_{i-1}$.

Let $X$ be a CW complex together with a homomorphism $\varphi \colon \pi_1(X) \to \Z^m$.
 Such a representation induces a homomorphism of rings $$\Z[\pi_1(X)] \to \L \to \Omega.$$
Choose an orientation of each cell and choose a lift of each cell to a cell in the  cover $\wti{X}$ corresponding to $\varphi$.  This determines a basis
$\{c_i\}$ for the chain complex
\[C_* (X;\Omega) = \ol{C_*(\wti{X})} \otimes_{\L} \Omega.\] By Chapman's theorem \cite{Ch74}, the torsion:
\[\tau(X) := \tau(C_*^\a(X;\Omega),\{c_i\})\,\, \in\,\, \{0\}\,\,\cup \,\,\Omega^\times/ \pm t_1^{k_1}\dots t_m^{k_m} \]
is a well-defined homeomorphism invariant of $(X,\varphi)$, that is up to the indeterminacy indicated it is independent of the choice of cellular decomposition, the choice of orientations and the choice of lifts.

An important property of the torsion is multiplicativity in short exact sequences.

\begin{theorem}\label{thm:mult_of_torsions_in_ses}
  Let \[0 \to C \to D \to E \to 0\] be a short exact sequence of finite
 acyclic chain complexes of finitely generated $\Omega$-modules.  Let $\{c_i\}$ and $\{e_i\}$ be bases for $C$ and $E$ respectively, let $\{\wt{e}_i\}$ be a lift of $\{e_i\}$ to $D$, and define the basis $d_i = c_i\wt{e}_i$.  Then $$\tau(D,\{d_i\}) = \pm \tau(C,\{c_i\})\tau(E,\{e_i\}).$$
\end{theorem}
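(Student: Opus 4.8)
The final statement, Theorem~\ref{thm:mult_of_torsions_in_ses}, is the classical multiplicativity of Reidemeister torsion in a short exact sequence of acyclic based complexes, and the plan is to prove it by the standard argument (see e.g.~\cite{Tu01}), which simplifies here because $\Omega$ is a field. The first step is to reduce the claim to a degreewise determinant identity. Since $\Omega$ is a field the sequence $0\to C\to D\to E\to 0$ splits in each degree, and the chosen basis $d_i=c_i\wt{e}_i$ is adapted to a splitting $D_i\cong C_i\oplus E_i$ in which the $c_i$ are a basis of the subspace $C_i$ and the $\wt{e}_i$ project isomorphically onto $E_i$. Writing $B_i(C):=\im(\partial_{i+1}\colon C_{i+1}\to C_i)$, and likewise $B_i(D)$ and $B_i(E)$, recall that $\tau(C,\{c_i\})$, $\tau(D,\{d_i\})$, $\tau(E,\{e_i\})$ are each computed by choosing bases $b_i$ of the corresponding $B_i$ and lifts $\wt{b}_{i-1}$ of $b_{i-1}$ to the degree-$i$ chain group, and forming the alternating product of the $\det([b_i\wt{b}_{i-1}/\,\bullet\,])$. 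By the independence of $\tau$ from these choices, recalled in the excerpt, it suffices to exhibit one compatible system of choices for which the degree-$i$ change-of-basis matrix for $D$ factors through those for $C$ and $E$.

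The second step is to set up the compatible choices. Because $C_*,D_*,E_*$ are acyclic we have $B_i=Z_i=\ker\partial_i$ in each complex, and a short diagram chase (lift a cycle of $E$ to $D$; its boundary lies in $Z_{i-1}(C)=B_{i-1}(C)$ by acyclicity of $C$, so it can be corrected to a cycle) shows that $0\to B_i(C)\to B_i(D)\to B_i(E)\to 0$ is exact for every $i$. I would then make the choices in the order: pick bases $b_i(C)$ of $B_i(C)$ and $b_i(E)$ of $B_i(E)$; pick lifts $\wt{b}_{i-1}(C)\in C_i$ of $b_{i-1}(C)$ and $\wt{b}_{i-1}(E)\in E_i$ of $b_{i-1}(E)$; pick a lift $v_{i-1}\in D_i$ of $\wt{b}_{i-1}(E)$; set $u_{i-1}:=\partial v_{i-1}\in B_{i-1}(D)$, which projects to $b_{i-1}(E)$, so that by exactness $b_{i-1}(D):=b_{i-1}(C)\,u_{i-1}$ is a basis of $B_{i-1}(D)$; and set $\wt{b}_{i-1}(D):=\wt{b}_{i-1}(C)\,v_{i-1}$, which is a lift of $b_{i-1}(D)$ since $\partial(\wt{b}_{i-1}(C)\,v_{i-1})=b_{i-1}(C)\,u_{i-1}$. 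With these choices the $D$-data genuinely refine the $C$- and $E$-data.

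The third step is the determinant computation. In each degree $i$ the basis $(b_i(D),\wt{b}_{i-1}(D))=(b_i(C),u_i,\wt{b}_{i-1}(C),v_{i-1})$ of $D_i$, after reordering to $(b_i(C),\wt{b}_{i-1}(C),u_i,v_{i-1})$ — a permutation, hence only a sign — has change-of-basis matrix to $d_i=(c_i,\wt{e}_i)$ that is block triangular with respect to $D_i\cong C_i\oplus E_i$: the first two vectors lie in $C_i$, and the last two project to $(b_i(E),\wt{b}_{i-1}(E))$ in $E_i$. Its diagonal blocks are $[b_i(C)\wt{b}_{i-1}(C)/c_i]$ and $[b_i(E)\wt{b}_{i-1}(E)/e_i]$, so
\[\det([b_i(D)\wt{b}_{i-1}(D)/d_i])=\pm\,\det([b_i(C)\wt{b}_{i-1}(C)/c_i])\cdot\det([b_i(E)\wt{b}_{i-1}(E)/e_i]),\]
and taking the alternating product over $i$ yields $\tau(D,\{d_i\})=\pm\,\tau(C,\{c_i\})\,\tau(E,\{e_i\})$. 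The only real obstacle is the bookkeeping: arranging the choices in the right order so the $D$-data refine the $C$- and $E$-data, and controlling the signs coming from the reorderings. This is harmless in the present setting because the assertion is only made up to sign, so neither the permutation signs nor the off-diagonal block need be computed — only the block-triangularity after a reordering matters.
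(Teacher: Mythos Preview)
Your proof is correct and follows the standard argument. The paper itself does not prove this theorem; it simply refers the reader to \cite[Theorem~1.5]{Tu01}. What you have written is essentially the proof given there, specialised to the case where the ground ring is a field (so that the degreewise splitting is automatic and the long exact sequence in homology of the short exact sequence of complexes is trivially exact). Your bookkeeping is accurate: the key points are the exactness of $0\to B_i(C)\to B_i(D)\to B_i(E)\to 0$, the compatible choice of lifts, and the block-triangular structure of the change-of-basis matrix, all of which you handle correctly. Since the statement is only asserted up to sign, the permutation signs from reordering are indeed irrelevant, as you note.
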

\begin{proof}
See \cite[Theorem~1.5]{Tu01}.
\end{proof}

We will also need the following useful formula.

\begin{lemma}\label{lemma:torsion-X-times-S1}
  Let $X$ be any CW complex, and let $\a \colon \pi_1(X \times S^1) \to \Z^m$ be a homomorphism such that  the composition $ \pi_1(S^1) \to \pi_1(X \times S^1) \xrightarrow{\a} \Z^m\to \Z[\Z^m]=\Lambda$ sends
   a generator of $\pi_1(S^1)$ to a non-trivial element $z\ne 1\in \Lambda$.
  Then $$\tau(X \times S^1) = (z-1)^{-\chi(X)}.$$
\end{lemma}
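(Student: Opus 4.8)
The plan is to compute the torsion of $X \times S^1$ by using the multiplicativity of Reidemeister torsion together with the standard CW structure on $S^1$ with one $0$-cell and one $1$-cell. First I would recall that, since $X \times S^1$ is built from $X$ by the formula $C_*(X \times S^1) \cong C_*(X) \otimes C_*(S^1)$ at the level of cellular chain complexes (over the appropriate group ring), the $\Omega$-coefficient chain complex of $X \times S^1$ fits into a short exact sequence
\[
0 \to C_*(X;\Omega) \to C_*(X \times S^1;\Omega) \to C_*(X;\Omega)[-1] \to 0,
\]
where the sub and quotient complexes correspond to the $0$-cell and $1$-cell of $S^1$ respectively, and the quotient carries the usual degree shift. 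The key point here is that the connecting differential between the two copies of $C_*(X;\Omega)$ is multiplication by $(z-1)$, where $z$ is the image under $\a$ of the $S^1$-generator; this is exactly where the hypothesis $z \neq 1$ in $\Lambda$ enters, since it guarantees that $z - 1$ is invertible in $\Omega$ and hence that $C_*(X \times S^1;\Omega)$ is acyclic.

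Next I would apply Theorem~\ref{thm:mult_of_torsions_in_ses} to this short exact sequence. If $C_*(X;\Omega)$ happens to be acyclic, both the sub and quotient complexes are acyclic, their torsions are $\tau(X)$ and $\tau(X)^{\pm 1}$ (the shift inverts the torsion), and these cancel, so one has to be careful: the correct statement is that $\tau(X \times S^1)$ is computed from the torsion of the mapping-cone-type complex, which is genuinely $(z-1)^{-\chi(X)}$ regardless of whether $C_*(X;\Omega)$ is acyclic. A cleaner route that sidesteps the acyclicity issue for $X$ is to compute $\tau$ directly from the complex $C_*(X \times S^1;\Omega)$: after choosing the product basis (lifts of products of cells), the complex is the algebraic mapping cone of multiplication by $(z-1)$ on $C_*(X;\Omega)$, and the torsion of the mapping cone of an isomorphism-up-to-scalar $f \colon D \to D$ is $\prod_i \det(f|_{D_i})^{(-1)^{i}}$ (up to sign), which for $f = (z-1)\cdot \id$ on a complex with $\dim_\Omega D_i$ equal to the number of $i$-cells of $X$ gives $(z-1)^{\sum_i (-1)^{i+1} \#\{i\text{-cells}\}} = (z-1)^{-\chi(X)}$.

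So the concrete steps are: (1) set up the product CW structure and the resulting identification of $C_*(X \times S^1;\Omega)$ with the mapping cone of $(z-1)\cdot\id$ on $C_*(X;\Omega)$, taking care with signs in the differential; (2) observe acyclicity of this cone from invertibility of $z-1$; (3) compute the torsion of a mapping cone of a scalar multiplication, either by a direct determinant bookkeeping or by citing the standard formula for the torsion of $0 \to D \xrightarrow{f} D \to 0$-type situations; (4) read off the exponent as $-\chi(X) = -\sum_i (-1)^i \#\{i\text{-cells of } X\}$. I expect the main obstacle to be purely bookkeeping: getting the signs and the direction of the degree shift right so that the exponent comes out as $-\chi(X)$ rather than $+\chi(X)$, and making sure the indeterminacy $\pm t_1^{k_1}\cdots t_m^{k_m}$ is respected throughout (in particular that $(z-1)^{-\chi(X)}$ is well-defined only up to this indeterminacy, which is fine since the statement is an equality in $\Omega^\times / \pm t_1^{k_1}\cdots t_m^{k_m}$). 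No single step is deep; the content is entirely in assembling the mapping-cone description correctly and invoking multiplicativity.
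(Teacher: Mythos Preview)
Your approach is correct. The paper itself does not give a proof of this lemma; it simply cites \cite[Example~2.7]{Ni03}. Your mapping-cone computation is essentially the standard argument one finds there: identify $C_*(X\times S^1;\Omega)$ with the cone of $(z-1)\cdot\id$ on $C_*(X;\Omega)$, observe acyclicity from invertibility of $z-1$ in $\Omega$, and read off the torsion as $\pm(z-1)^{-\chi(X)}$ by a direct determinant count.

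Two small remarks. First, there is a minor inconsistency in your write-up: you state the general mapping-cone torsion formula with exponent $(-1)^i$ but then apply it with exponent $(-1)^{i+1}$. The latter is what actually comes out of the computation (as you can verify on the case $X=\mathrm{pt}$, where the two-term complex $\Omega\xrightarrow{z-1}\Omega$ has torsion $(z-1)^{-1}$ in the paper's convention), so just make sure your stated formula matches. Second, you are right that the short exact sequence route via Theorem~\ref{thm:mult_of_torsions_in_ses} only applies when $C_*(X;\Omega)$ is itself acyclic; in that case $\chi(X)=0$ and multiplicativity gives $\tau=1$, consistent with the claim, but for general $X$ the direct cone computation is the way to go. Your step-by-step plan (1)--(4) is exactly what is needed, and your assessment that the only obstacle is sign bookkeeping is accurate.
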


\begin{proof}
  See \cite[Example~2.7]{Ni03}.
\end{proof}

In particular this lemma implies that the torsion of a torus is $\tau(S^1 \times S^1) = 1$, provided at least one of the generators maps nontrivially to~$\Z^m$.

\section{Blanchfield forms and the 4-dimensional clasp number}
\label{section:clasp}
For the reader's convenience we recall Theorem \ref{mainthm}.\\

\noindent \textbf{Theorem \ref{mainthm}.}\emph{
Let $L$ and $J$ be  $m$-component links.  Then  $|\beta(L)-\beta(J)| \leq c(L,J)$.
Moreover, if $c(L,J) = |\beta(L) - \beta(J)|$, then the Witt  sum of Blanchfield forms $\Bl_L \oplus -\Bl_{J}$ is metabolic. }\\

Note that Corollary \ref{maincor} is an immediate consequence of Theorem \ref{mainthm} and Lemmas \ref{lem:sameorder} and  (\ref{lem:ordernorm}).

The first part of the theorem has been shown by Kawauchi \cite{Ka13}, however our argument will lead into the proof of the second part of the theorem, so we give a complete argument.

Let $L$ and $J$ be  $m$-component links. We write $c:=c(L,J)$.
We start out by introducing notation, especially for the combinatorics of an immersed concordance.

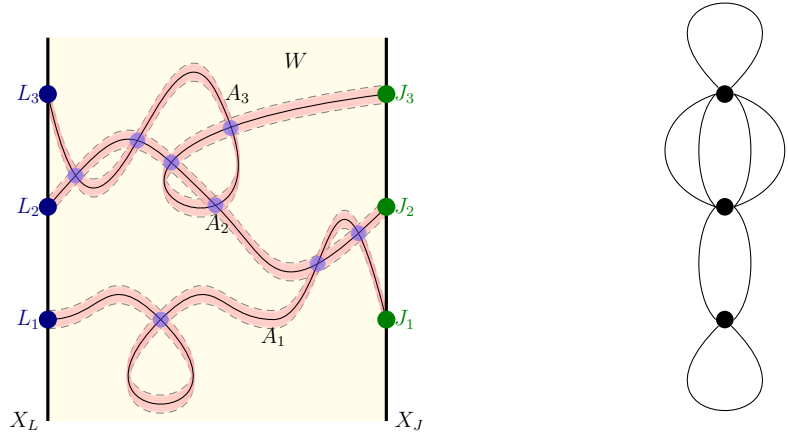
\begin{figure}
\begin{tikzpicture}
\begin{scope}[scale=1.5]
\fill[yellow!20,opacity=0.5] (-4,1.5) rectangle (-1,-1.9);
\draw (-1.8,1.3) node [scale=0.7] {$W$};
\foreach \y in {-0.08,0.08} {
\begin{scope}[shift={(0,\y)}, densely dashed]
\path[draw,opacity=0.5] (-4,1) .. controls (-3.5,-1.3) and (-3,2) .. (-2.5,1)  .. controls (-1.5,-1) and (-5,0.5) .. (-1,1);
\path[draw,opacity=0.5] (-4,0) .. controls (-3.3,0.8) .. (-2.5,0)  .. controls (-2,-0.5) and (-2,-1) .. (-1,0);
\path[draw,opacity=0.5] (-4,-1) .. controls (-3.5,-1) and (-3.5,-0.5) .. (-3,-1) .. controls (-2,-2) and (-4,-2) .. (-3,-1) .. controls (-2.5,-0.5) and (-2.5,-1).. (-2,-1) .. controls (-1.5,-1) and (-1.5,1) .. (-1,-1);
\end{scope}
}
\foreach \y in {-0.06,-0.04,-0.02,0,0.02,0.04,0.06} {
\begin{scope}[shift={(0,\y)}]
\path[draw,opacity=0.5,red!30,very thick] (-4,1) .. controls (-3.5,-1.3) and (-3,2) .. (-2.5,1)  .. controls (-1.5,-1) and (-5,0.5) .. (-1,1);
\path[draw,opacity=0.5,red!30,very thick] (-4,0) .. controls (-3.3,0.8) .. (-2.5,0)  .. controls (-2,-0.5) and (-2,-1) .. (-1,0);
\path[draw,opacity=0.5,red!30,very thick] (-4,-1) .. controls (-3.5,-1) and (-3.5,-0.5) .. (-3,-1) .. controls (-2,-2) and (-4,-2) .. (-3,-1) .. controls (-2.5,-0.5) and (-2.5,-1).. (-2,-1) .. controls (-1.5,-1) and (-1.5,1) .. (-1,-1);
\end{scope}
}
\path[draw,name path=a3] (-4,1) .. controls (-3.5,-1.3) and (-3,2) .. (-2.5,1) node[scale=0.7,above=0.22,right=0.05]{$A_3$} .. controls (-1.5,-1) and (-5,0.5) .. (-1,1);
\path[draw,name path=a2] (-4,0) .. controls (-3.3,0.8) .. (-2.5,0) node [right=0.01,below=0.06,scale=0.7]{$A_2$} .. controls (-2,-0.5) and (-2,-1) .. (-1,0);
\path[draw,name path=a1] (-4,-1) .. controls (-3.5,-1) and (-3.5,-0.5) .. (-3,-1) .. controls (-2,-2) and (-4,-2) .. (-3,-1) .. controls (-2.5,-0.5) and (-2.5,-1).. (-2,-1)
node[below=0.08, scale=0.7]{$A_1$} .. controls (-1.5,-1) and (-1.5,1) .. (-1,-1);
\fill[blue!70,opacity=0.5] (-3,-1) circle (0.07);
\fill[blue!70,opacity=0.5] (-2.38,0.7) circle (0.07);
\fill[name intersections={of=a1 and a2, by=x, name=int, total=\t}] [blue!70,opacity=0.5] \foreach \s in {1,...,\t}{ (int-\s) circle (0.07)};
\fill[name intersections={of=a3 and a2, by=x, name=int, total=\t}] [blue!70,opacity=0.5] \foreach \s in {1,...,\t}{ (int-\s) circle (0.07)};
\draw[very thick] (-4,1.5) -- (-4,-1.9) node[left=0.1,scale=0.7] {$X_L$};
\draw[very thick] (-1,1.5) -- (-1,-1.9) node[right=0.1,scale=0.7] {$X_J$};
\foreach \x/\xa in {-1/1,0/2, 1/3} {\fill[blue!50!black] (-4,\x) circle[radius=0.08] node[left=0.08,scale=0.7] {$L_{\xa}$}; \fill[green!50!black] (-1,\x) circle[radius=0.08] node[right=0.08,scale=0.7] {$J_{\xa}$};}
\node[circle, fill=black,  scale=0.6] (A3) at (2,1) {};
\node[circle, fill=black, scale=0.6] (A2) at (2,0) {};
\node[circle, fill=black, scale=0.6] (A1) at (2,-1) {};
\draw (A3.north west) .. controls +(-1,1) and +(1,1) ..  (A3.north east);
\draw (A3.west)  .. controls +(-0.6,-0.2) and +(-0.6,0.2) .. (A2.west);
\draw (A3.west)  .. controls +(-0.2,-0.2) and +(-0.2,0.2) .. (A2.west);
\draw (A3.east) .. controls +(0.6,-0.2) and +(0.6,0.2) ..(A2.east);
\draw (A3.east) .. controls +(0.2,-0.2) and +(0.2,0.2) .. (A2.east);
\draw (A2.west)  .. controls +(-0.2,-0.2) and +(-0.2,0.2) .. (A1.west);
\draw (A2.east) .. controls +(0.2,-0.2) and +(0.2,0.2) ..(A1.east);
\draw (A1.south west) .. controls +(-1,-1) and +(1,-1) ..  (A1.south east);
\end{scope}
\end{tikzpicture}
\caption{Left: an example of an immersed concordance. The picture is a sketch in dimension 2.
The annuli are represented by curves and the links are represented as points. The manifold~$W$ is the part of the picture outside the neighbourhood
of the annuli. The dashed curves represent $P$. Right: the corresponding graph $\Gamma$ (the labelling of the edges is not drawn).}\label{fig:notation1}
\end{figure}

\begin{itemize}
  \item Let $A_1,\dots,A_m \looparrowright S^3 \times I$ be $m$ properly immersed annuli giving an immersed concordance between $L$ and $J$ with $c$ double points.
  Define $A := \bigcup_{i=1}^m A_i$. to be their union.
   So $A \cap (S^3 \times \{0\}) = L_i$ and $A \cap (S^3 \times \{1\}) = -J$.
  (Here we identify $L\times \{0\}$ with $L$ and similarly we identify $J\times \{1\}$ with $J$.)
  \item Define $d_{ij} := |A_i \cap A_j|$ to be the number of double points between $A_i$ and $A_j$, for $i \neq j$.  Let $d_{ii}$ be the number of self-intersections of $A_i$. Note that $d_{ij} = d_{ji}$.  We have $\sum_{i \leq j} d_{ij} = c$.
  \item Let $\G$ be the graph defined by the combinatorics of the intersections of the $A_i$.  That is, take $m$ vertices corresponding to each of the annuli $A_i$, and add $d_{ij}$ edges between the $i$th and $j$th vertices.  There are $c$ edges in total.
  \item We introduce a labelling of edges of $\Gamma$. An edge corresponding to a positive double point is labelled with ``$+$'', while an edge corresponding
  to a negative double point is labelled with ``$-$''.
 \item Let $E := \beta_0(\G)$ be the number of connected components of $\G$ and let 
 $D:= 
 \beta_1(\G)$ be the first Betti number of~$\G$.
 We have $\chi(\G) = E-D = m-c$.
 \item Define $W := S^3 \times I \sm \nu A$ to be the exterior of the annuli $A$.
 \item The boundary of $W$ decomposes as $\partial W = X_L \cup_{\partial X_L} P \cup_{\partial X_J} X_J$, where this defines $P$.  That is, $P:= \cl( \partial W \sm (X_L \cup X_J))$, or equivalently $P = \partial \nu A \sm (\nu L \cup \nu J)$.
 \end{itemize}

Some of the notation is sketched in Figure~\ref{fig:notation1}.
We have three lemmata which lead to the proof of Theorem~\ref{mainthm}.  The first looks at the integral homology of $P$.

\begin{lemma}\label{lem:homology-of-P}
  The integral homology of 
$P$ is given by
$H_0(P;\Z) \cong \Z^E$, $H_1(P;\Z) \cong \Z^{2m+D}$ and $H_2(P;\Z) \cong \Z^{2m+D-E}$.
\end{lemma}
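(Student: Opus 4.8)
The plan is to understand $P$ as a boundary piece built out of the normal circle bundles of the immersed annuli $A_i$, with small tubes removed around each double point, and then to compute its homology via a Mayer--Vietoris argument driven by the combinatorics of the intersection graph $\Gamma$. First I would observe that each annulus $A_i$, being an immersed copy of $S^1 \times I$, is homotopy equivalent to $S^1$, and its boundary normal $S^1$-bundle (after deleting the two end-tori $\partial X_L$ and $\partial X_J$, i.e. the meridional tori over $L_i$ and $J_i$) is homotopy equivalent to $S^1 \times S^1$ away from the double points. At each double point of $A$ — whether a self-intersection of one annulus or an intersection between two annuli — the local model is two disk-sheets meeting transversely in $S^3 \times I$, and the link of this singularity in $\partial \nu A$ is a copy of $S^3$ from which we delete two Hopf-linked solid-torus neighbourhoods; so the local contribution to $P$ near each double point is $S^3 \setminus (\nu(\text{Hopf link}))$, a circle bundle over a twice-punctured disk, with $H_*$ that of $T^2$ in degrees $0,1$ and $\Z$ in degree $2$ — or more simply, $P$ is obtained from the disjoint union of the $m$ "plumbing-free" pieces $\p\nu A_i \setminus (\nu L_i \cup \nu J_i) \simeq T^2$ by performing one connect-sum-like identification along a solid torus for each edge of $\Gamma$.

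Concretely I would set up the Mayer--Vietoris spectral sequence (or an iterated long exact sequence) for the decomposition of $P$ into $m$ pieces $P_i := \partial \nu A_i \setminus (\nu L_i \cup \nu J_i)$, glued together along $c$ tube regions, one for each double point; the overlaps are the local Hopf-link complements, which are homotopy equivalent to $T^2$. The key input is that $\chi(\Gamma) = E - D = m - c$, which will govern the ranks: intuitively, each $P_i$ contributes an $H_0 = \Z$, an $H_1 = \Z^2$ coming from the meridian and the longitude of the annulus, plus two further boundary circles $\mu_{L_i}, \mu_{J_i}$; gluing along a tube for each edge of $\Gamma$ kills one $H_0$ per edge within a connected component (leaving $H_0(P) = \Z^E$) and creates one new $H_1$ class per independent cycle of $\Gamma$ (contributing the $\Z^D$ to $H_1$). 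A careful count then yields $H_1(P;\Z) \cong \Z^{2m + D}$, where the $2m$ records the two boundary tori $\partial X_L$ and $\partial X_J$ (each a disjoint union of $m$ tori, each $T^2$ contributing one meridian class, and the $m$ longitudes of the annuli accounting for the rest); and Poincaré--Lefschetz duality for the $3$-manifold-with-boundary $P$, together with the Euler characteristic calculation $\chi(P) = 0$ (since $P$ fibres in $S^1$'s over a $2$-complex, or directly $\chi(P) = \chi(T^2) \cdot m - \chi(\text{glued tubes}) = 0$), forces $H_2(P;\Z) \cong \Z^{2m + D - E}$ and $H_3(P;\Z) = 0$ (as $P$ has nonempty boundary). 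I would also check torsion-freeness: each local piece and each gluing circle has free integral homology, and the Mayer--Vietoris boundary maps are, up to sign, incidence-type maps of the graph $\Gamma$, which are integral and have free cokernel after we identify the image, so no torsion is introduced.

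The main obstacle I expect is bookkeeping the gluing precisely: the immersed annuli may have self-intersections (so a single $A_i$ can contribute a loop in $\Gamma$), the double points come with signs (though signs are irrelevant for the unoriented homology computation, they must be tracked to keep the later Blanchfield argument honest), and one must be careful that the connecting maps in Mayer--Vietoris genuinely have the expected rank — i.e. that the meridians of the annuli glued at a double point are identified in the Hopf-link overlap exactly as the combinatorics of $\Gamma$ predicts, with no unexpected extra relations. The cleanest way to control this is to build $P$ by starting from $\bigsqcup_i P_i$ and attaching the $c$ overlap regions one at a time, at each stage invoking the Mayer--Vietoris sequence with the overlap $\simeq T^2$; attaching an edge within an already-connected piece raises $b_1$ by $1$ and $b_2$ by $1$ and leaves $b_0$ unchanged, while attaching an edge joining two distinct components lowers $b_0$ by $1$ and leaves $b_1, b_2$ as dictated, and these two cases happen $D$ and $m - E$ times respectively — yielding exactly the claimed ranks $\Z^E, \Z^{2m+D}, \Z^{2m+D-E}$.
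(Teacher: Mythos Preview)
Your approach is essentially the same as the paper's: both recognise $P$ as a plumbed $3$-manifold assembled from the circle bundles $Y_j\times S^1$ over punctured annuli (one per vertex of $\Gamma$) together with Hopf-link-complement pieces $T_i\times[-1,1]\simeq T^2$ (one per edge), and both compute $H_*(P;\Z)$ by a Mayer--Vietoris argument whose combinatorics are those of~$\Gamma$; the paper does one global Mayer--Vietoris and proves the key map $\bigoplus_i H_1(T_i)\to H_1(Y\times S^1)$ splits, while you iterate edge-by-edge, which is equivalent. One small point to tighten: in your inductive step the phrase ``leaves $b_1,b_2$ as dictated'' for a connecting edge is too vague --- in fact such an edge leaves $b_1$ unchanged but raises $b_2$ by~$1$ (as one sees from the Mayer--Vietoris with $T^2$ overlap, or simply from $\chi(P)=0$), and this is exactly what is needed to land on $b_2=2m+D-E$.
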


The next lemma computes the integral homology of $W$.

\begin{lemma}\label{lem:homology-of-W}
  The integral homology of $W$ is given by $$H_k(W;\Z) \cong \begin{cases}
    \Z & k=0 \\ \Z^{m} & k=1 \\ \Z^{m + c - 1} &k=2\\ 0 & \text{otherwise.}
  \end{cases}$$
Furthermore there exists an isomorphism $\phi_W \colon H_1(W;\Z) \toiso \Z^m$ such that the diagram below commutes, where the other maps are either induced by the inclusions or they are given by the canonical isomorphisms $\phi_L$ and $\phi_J$ induced by the orientations of the links:
\[\xymatrix @R+0.5cm @C+0.5cm {
 H_1(X_L;\Z) \ar[r]^-{\cong} \ar[dr]_-{\phi_L}^-{\cong} & H_1(W;\Z) \ar[d]_{\cong}^{\phi_W} & H_1(X_J;\Z) \ar[l]_-{\cong} \ar[ld]^-{\phi_J}_-{\cong} \\
 & \Z^m &
 }\]
 In particular $H_1(W;\Z)$ is generated by the meridians to $L$, or to $J$, and the maps $H_1(X_L;\Z) \to H_1(W;\Z)$ and $H_1(X_J;\Z) \to H_1(W;\Z)$ are isomorphisms.
\end{lemma}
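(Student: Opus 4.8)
The plan is to compute $H_*(W;\Z)$ by a Mayer--Vietoris argument applied to the decomposition $S^3\times I = W \cup \nu A$, whose intersection deformation retracts onto $P$. Since $S^3\times I$ is contractible, the Mayer--Vietoris sequence degenerates into isomorphisms $H_{k}(W;\Z)\oplus H_k(\nu A;\Z)\cong \wti H_{k-1}(P;\Z)$ for $k\geq 2$, together with a short exact sequence in low degrees. The regular neighbourhood $\nu A$ deformation retracts onto $A$, which is homotopy equivalent to the graph $\G$ (collapse each annulus $A_i$ to its core circle; an annulus with a transverse double point to another annulus becomes a wedge of a circle with an edge, etc.), so $H_0(\nu A)\cong \Z^E$, $H_1(\nu A)\cong \Z^{m+D}$ (the $m$ core circles plus the $D=\beta_1(\G)$ loops in the graph), and $H_{\geq 2}(\nu A)=0$. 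Feeding in Lemma~\ref{lem:homology-of-P} ($H_0(P)\cong\Z^E$, $H_1(P)\cong\Z^{2m+D}$, $H_2(P)\cong\Z^{2m+D-E}$) then reads off $H_2(W;\Z)\cong\Z^{2m+D-E}\cong\Z^{m+c-1}$ (using $E-D=m-c$), $H_{\geq 3}(W;\Z)=0$, and $H_0(W;\Z)\cong\Z$ since $W$ is connected.

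For $H_1(W;\Z)$ I would first observe $W$ is connected and then compute its rank from the remaining segment of the Mayer--Vietoris sequence: $0\to H_1(W)\to H_1(W)\oplus H_1(\nu A)$ is not quite it — more precisely the tail gives $H_2(S^3\times I)=0\to H_1(P)\to H_1(W)\oplus H_1(\nu A)\to H_1(S^3\times I)=0$, hence $H_1(W;\Z)\oplus \Z^{m+D}\cong \Z^{2m+D}$. Since everything is free, $H_1(W;\Z)\cong \Z^m$. Alternatively, and more usefully for the second half of the statement, I would argue directly: a meridian of $L_i$ in $X_L\subset W$ is isotopic in $W$ to a meridian of the $i$th annulus $A_i$, and these $m$ meridians generate $H_1(W;\Z)$; the map $H_1(X_L;\Z)\to H_1(W;\Z)$ sends the standard meridian basis to this generating set.

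For the commuting diagram and the assertion that $H_1(X_L;\Z)\to H_1(W;\Z)$ is an isomorphism: half-lives-half-dies (or direct inspection) shows $H_1(X_L;\Z)\cong\Z^m$ is freely generated by meridians, and the inclusion $X_L\hookrightarrow W$ carries the $i$th meridian of $L$ to a meridian of $A_i$. Since the $m$ meridians of the annuli generate $H_1(W;\Z)\cong\Z^m$ and this is a surjection between free abelian groups of the same finite rank, it is an isomorphism; the same argument applies to $X_J$. Defining $\phi_W$ so that the meridian of $A_i$ maps to the $i$th standard basis vector of $\Z^m$ makes both triangles commute by construction, because $\phi_L$ and $\phi_J$ are defined by sending the $i$th meridian to the $i$th basis vector. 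The main obstacle is the homotopy-type identification $\nu A \simeq \G$ and correctly bookkeeping $H_1(\nu A)$: one must be careful that the self-intersection points $d_{ii}$ also contribute loops, so that $\beta_1$ of the collapsed complex is exactly $D=\beta_1(\G)$ where $\G$ has an edge for every double point including self-intersections; once the homotopy equivalence is set up, everything else is linear algebra over $\Z$ feeding on Lemma~\ref{lem:homology-of-P}.
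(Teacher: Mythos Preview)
Your approach via Mayer--Vietoris for $S^3\times I = W\cup_P \nu A$ is the same as the paper's, and your computation of $H_1(W;\Z)\cong\Z^m$ together with the commuting triangle is essentially identical to theirs. However your treatment of $H_2$ and $H_3$ contains a genuine error.

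The claim that $S^3\times I$ is contractible is false: $S^3\times I\simeq S^3$, so $H_3(S^3\times I;\Z)\cong\Z$. The Mayer--Vietoris sequence therefore does \emph{not} degenerate in degree $\geq 2$; the relevant stretch reads
\[0\longrightarrow H_3(W;\Z)\longrightarrow H_3(S^3\times I;\Z)\cong\Z\longrightarrow H_2(P;\Z)\longrightarrow H_2(W;\Z)\longrightarrow 0.\]
There is also an arithmetic slip: from $E-D=m-c$ one gets $2m+D-E=m+c$, not $m+c-1$. These two errors happen to cancel on the rank of $H_2(W)$, but the argument as written is broken: Mayer--Vietoris alone neither gives $H_3(W;\Z)=0$ (you would need the connecting map $\Z\to H_2(P;\Z)$ to be injective) nor shows that $H_2(W;\Z)$ is torsion-free (the image of that map might fail to be primitive).

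The paper circumvents this by arguing differently in degrees $2$ and $3$. After obtaining $H_1(W;\Z)\cong\Z^m$ generated by meridians via Mayer--Vietoris, it uses Poincar\'e--Lefschetz duality $H_3(W;\Z)\cong H^1(W,\partial W;\Z)$ together with the vanishing $H_1(W,\partial W;\Z)=0$ (the meridians generate $H_1(W;\Z)$ and lie in $\partial W$) to conclude $H_3(W;\Z)=0$. Torsion-freeness of $H_2(W;\Z)$ then follows from $H^3(W;\Z)\cong H_1(W,\partial W;\Z)=0$ and the universal coefficient theorem, and the rank is read off from $\chi(W)=c$, computed via $\chi(S^3\times I)=\chi(W)+\chi(\nu A)-\chi(P)$. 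Your Mayer--Vietoris route can be completed, but it requires an independent argument for one of these two facts.
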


For any subset $V \subseteq W$, a $\L$ coefficient system is defined with the representation $\pi_1(V) \to H_1(V;\Z) \to H_1(W;\Z) \xrightarrow{\phi_W} \Z^m$.

The third and final lemma needed for the proof of Theorem~\ref{mainthm} looks at the $\L$ coefficient homology of $P$, where the coefficient system is defined with the representation $\pi_1(P) \to H_1(P;\Z)  \to H_1(W;\Z) \xrightarrow{\phi_W} \Z^m.$

\begin{lemma}\label{lem:ord-Lambda-hom-P-negligible}
The homology $H_*(P;\Omega)$
is trivial. Moreover the order of the homology $\ord H_1(P;\L)$ is a negligible polynomial.
\end{lemma}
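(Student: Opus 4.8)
The plan is to combine the homology computations of the previous two lemmas with a Reidemeister torsion argument. First, to see that $H_*(P;\Omega)$ is trivial: $P$ is built as a union of pieces sitting inside $\partial \nu A$. Concretely, $P = \partial\nu A \setminus (\nu L\cup \nu J)$, and $\nu A$ is a regular neighbourhood of a union of immersed annuli, so $\partial \nu A$ is assembled out of pieces of the form $S^1\times(\text{surface})$ together with the parts near the double points. The cleanest route is to use the fact that $\Omega$ is a field together with Lemma~\ref{lem:h0}: each connected piece of $P$ has a circle factor (coming from the meridian of the corresponding annulus) that maps nontrivially to $\Z^m$ — indeed the map $\pi_1(P)\to H_1(W;\Z)\xrightarrow{\phi_W}\Z^m$ sends each such meridian to a standard basis vector by Lemma~\ref{lem:homology-of-W}. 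Hence $H_0$ of each piece with $\Omega$ coefficients vanishes, and then a Mayer--Vietoris/Euler characteristic argument forces all the $\Omega$-homology to vanish: the total $\Omega$-Euler characteristic of $P$ equals its rational Euler characteristic, which by Lemma~\ref{lem:homology-of-P} is $E - (2m+D) + (2m+D-E) = 0$, and since the top and bottom groups vanish for the obvious reasons (it's a $2$-complex with $H_0(P;\Omega)=0$ and $H_2(P;\Omega)=0$ by duality or direct inspection), $H_1(P;\Omega)=0$ as well.

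Second, to control $\ord H_1(P;\L)$: since $H_*(P;\Omega)=0$, the chain complex $C_*(P;\Omega)$ is acyclic, so Reidemeister torsion $\tau(P)\in\Omega^\times/\pm t_1^{k_1}\cdots t_m^{k_m}$ is defined, and by the standard relation between torsion and the orders of homology modules (the ``Milnor--Turaev'' formula, $\tau(P)\doteq \prod_i \ord(H_i(P;\L))^{(-1)^{i+1}}$ when all $H_i(P;\L)$ are torsion), it suffices to show $\tau(P)$ is negligible, i.e.\ a unit in $\L_S$. To compute $\tau(P)$ I would decompose $P$ into manageable pieces along tori and use multiplicativity of torsion (Theorem~\ref{thm:mult_of_torsions_in_ses}). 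The building blocks of $\partial\nu A$ away from the double points are products $F\times S^1$ where $F$ is a surface with boundary and the $S^1$ is a meridian mapping to a nontrivial element $z=t_i-1$-type factor; by Lemma~\ref{lemma:torsion-X-times-S1} the torsion of such a piece is $(t_i-1)^{-\chi(F)}$, which is negligible. The pieces glued in are tori (near double points), with torsion $1$ by the remark after Lemma~\ref{lemma:torsion-X-times-S1}, hence again negligible. The gluing tori along which all these are assembled also have torsion $1$. Since negligibility is preserved under products, inverses, and the multiplicativity formula, $\tau(P)$ is negligible, and therefore $\ord H_1(P;\L)$ is negligible.

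The main obstacle I expect is the bookkeeping near the double points of the immersed annuli: the local model of $\partial\nu A$ near a transverse double point in a $4$-manifold is not simply a product, and one has to identify this piece correctly (it is a thrice-punctured sphere times $S^1$, or more precisely the boundary of a neighbourhood of two transverse $2$-disks in $\R^4$, which is $S^3$ minus two Hopf-linked solid tori, a $T^2$-bundle piece) and verify its torsion is negligible and its $\Omega$-homology vanishes. One must also be careful that the decomposition of $P$ into pieces is along tori on which the coefficient system restricts nontrivially, so that each piece individually has vanishing $\Omega$-homology and the Mayer--Vietoris splitting of torsion is legitimate. Once the local models are pinned down, everything reduces to Lemma~\ref{lemma:torsion-X-times-S1} and the multiplicativity of torsion, and the conclusion that the product of negligible elements is negligible, which is immediate since the negligible elements are exactly the units of $\L_S$.
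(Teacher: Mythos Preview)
Your overall strategy matches the paper's: decompose $P$ into plumbing pieces, show each piece has trivial $\Omega$-homology, compute $\tau(P)$ by multiplicativity, and extract $\ord H_1(P;\L)$ from the torsion formula. But there are two genuine gaps in the execution.

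First, your argument for $H_*(P;\Omega)=0$ does not close. You only conclude that $H_0$ of each piece vanishes, and then try to finish globally via Euler characteristic, asserting $H_2(P;\Omega)=0$ ``by duality or direct inspection''. Duality gives $H_2(P;\Omega)\cong H^1(P,\partial P;\Omega)$, and since $H_*(\partial P;\Omega)=0$ this is $H_1(P;\Omega)^*$; that is consistent with $\chi(P)=0$ but does not force vanishing. The point you are missing is that every piece in the decomposition is literally of the form $X\times S^1$ with the $S^1$ factor mapping nontrivially to $\Z^m$, so by Lemma~\ref{lemma:torsion-X-times-S1} (or K\"unneth) \emph{all} of its $\Omega$-homology vanishes, not just $H_0$. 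Then Mayer--Vietoris gives $H_*(P;\Omega)=0$ directly, and the Euler characteristic detour is unnecessary.

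Second, the formula $\tau(P)\doteq\prod_i(\ord H_i(P;\L))^{(-1)^{i+1}}$ involves degrees $0$ and $2$ as well. Knowing that $\tau(P)$ is negligible tells you only that $\ord H_1(P;\L)\cdot(\ord H_0(P;\L)\cdot\ord H_2(P;\L))^{-1}$ is negligible; you must still check the other two orders. The paper does this: $\ord H_0(P;\L)$ is read off from Lemma~\ref{lem:h0} (equal to $1$ if $m\ge 2$ and $t-1$ if $m=1$, hence negligible), and $H_2(P;\L)=0$ because $P$ is homotopy equivalent to a $2$-complex so $H_2(P;\L)$ is a submodule of a free $\L$-module, hence torsion-free, while $H_2(P;\L)\otimes_\L\Omega=H_2(P;\Omega)=0$.

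A minor correction: the local model near a double point is not a thrice-punctured sphere times $S^1$. It is the exterior of the Hopf link in $S^3$, which is $T^2\times[-1,1]$; your second description (``$S^3$ minus two Hopf-linked solid tori'') is the right one. This piece has torsion $1$, as you say, so the rest of your torsion computation goes through once the two gaps above are filled.
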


Next we give the proofs of the three lemmata above.

\begin{proof}[Proof of Lemma~\ref{lem:homology-of-P}]
The key observation is that since each $P$ is a boundary of a tubular neighbourhood of a
surface with double points, it is a (possibly disconnected) plumbed
3-manifold; see \cite[Example 4.6.2]{GS99}. We remark that if we add, to each vertex in the graph $\Gamma$, two edges ending in arrowhead vertices, then we obtain the plumbing diagram for~$P$
in the sense of \cite[Appendix]{Ne81}. Note that the framings are irrelevant because none of the plumbed components are closed.  Also note that in
our convention the plumbing corresponding to a disconnected graph is a disjoint union of the plumbed manifolds
corresponding to the connected components of the graph.  Elsewhere in the literature it has been the connected sum instead of a disjoint union.  Computation of $H_1(P;\Z)$ is a standard procedure.  We recall it for the reader's convenience and for future reference in the proof of Lemma~\ref{lem:ord-Lambda-hom-P-negligible}.

\begin{figure}
\begin{tikzpicture}
\draw [top color=red!50!white,bottom color=red!50!white, middle color=red!40!white, thick,draw=red!70!black](-2,1) .. controls (-1.7,0.5) and (-1.7,-0.5) .. (-2,-1) -- (-4,-1).. controls (-3.7,-0.5) and (-3.7,0.5) .. (-4,1) -- (-2,1);
\draw[top color=red!50!white,bottom color=red!50!white, middle color=red!40!white, thick,draw=red!70!black] (2,1) .. controls (2.3,0.5) and (2.3,-0.5) .. (2,-1) -- (4,-1).. controls (4.3,-0.5) and (4.3,0.5) .. (4,1) -- (2,1);
\draw[left color=blue!50!white,right color=blue!50!white, middle color=blue!40!white, thick,draw=blue!70!black] (1,2) .. controls (0.5,1.7) and (-0.5,1.7) .. (-1,2)
-- (-1,4) .. controls (-0.5,3.7) and (0.5,3.7) .. (1,4) -- (1,2);
\draw[left color=blue!50!white,right color=blue!50!white, middle color=blue!40!white, thick,draw=blue!70!black] (1,-2) .. controls (0.5,-2.3) and (-0.5,-2.3) .. (-1,-2)
-- (-1,-4) .. controls (-0.5,-4.3) and (0.5,-4.3) .. (1,-4) -- (1,2);
\draw[very thick,green!70!black,inner color=green!30!white, outer color=green!50!white] (-2,-1) arc [start angle=90, delta angle=-90, radius=1] .. controls (-0.5,-2.3) and (0.5,-2.3)  .. (1,-2)
arc [start angle=180, delta angle=-90, radius=1] .. controls (2.3,-0.5) and (2.3,0.5) .. (2,1) arc[start angle=270, delta angle=-90, radius =1]
.. controls (0.5,1.7) and (-0.5,1.7) .. (-1,2) arc [start angle=0, delta angle=-90, radius=1] .. controls (-1.7,0.5) and (-1.7,-0.5) .. (-2,-1);
\shade[left color=blue!20!white, right color=blue!20!white, middle color=blue!15!white] (-1,4) .. controls (-0.5,3.7) and (0.5,3.7) .. (1,4) .. controls (0.5,4.2)
and (-0.5,4.2) .. (-1,4);
\shade[top color=red!20!white, bottom color=red!20!white, middle color=red!15!white] (-4,1) .. controls (-3.7,0.5) and (-3.7,-0.5) .. (-4,-1) .. controls (-4.2,-0.5)
and (-4.2,0.5) .. (-4,1);
\draw[very thick, densely dashed] (-1,-2) .. controls (-0.5,-1.8) and (0.5,-1.8) .. (1,-2);
\draw[very thick, densely dashed] (-1,2) .. controls (-0.5,2.2) and (0.5,2.2) .. (1,2);
\draw[very thick, densely dashed] (-1,4) .. controls (-0.5,4.2) and (0.5,4.2) .. (1,4);
\draw[very thick, densely dashed] (-1,-4) .. controls (-0.5,-3.8) and (0.5,-3.8) .. (1,-4);
\draw[very thick] (-1,2) .. controls (-0.5,1.7) and (0.5,1.7) .. (1,2);
\draw[very thick] (-1,-2) .. controls (-0.5,-2.3) and (0.5,-2.3) .. (1,-2);
\draw[very thick] (-1,-4) .. controls (-0.5,-4.3) and (0.5,-4.3) .. (1,-4);
\draw[very thick] (-1,4) .. controls (-0.5,3.7) and (0.5,3.7) .. (1,4);
\foreach \x/\xt in {-4/-3.7,-2/-1.7,2/2.3,4/4.3} \draw[very thick] (\x,-1) .. controls (\xt,-0.5) and (\xt,0.5) .. (\x,1);
\foreach \x/\xt in {-4/-4.2,-2/-2.2,2/1.8,4/3.8} \draw[very thick, densely dashed] (\x,-1) .. controls (\xt,-0.5) and (\xt,0.5) .. (\x,1);
\begin{scope}[ultra thick]
\draw (-4.5,0) -- (-3.9,0);
\draw (-3.6,0) -- (-1.9,0);
\draw (-1.6,0) -- (2.1,0);
\draw (2.4,0) -- (4.1,0);
\draw (4.4,0) -- (4.9,0);
\draw (0,-4.9) -- (0,-4.4);
\draw (0,-4.1) -- (0,-2.4);
\draw (0,-2.1) -- (0,1.6);
\draw (0,1.9) -- (0,3.6);
\draw (0,3.9) -- (0,4.5);
\draw (4.7,0.2) node[scale=0.8] {$A_1$};
\draw (0.28,-4.7) node [scale=0.8] {$A_2$};
\draw (1.5,1.5) node [scale=0.8] {$T$};
\draw (1.25,3) node [scale=0.8] {$Y_2$};
\draw (1.25,-3) node [scale=0.8] {$Y_2$};
\draw (3,1.2) node [scale=0.8] {$Y_1$};
\draw (-3,1.2) node [scale=0.8] {$Y_1$};
\end{scope}
\fill (0,0) circle [radius=0.1];
\end{tikzpicture}
\caption{A schematic of the manifold $P$ near a double point. Two annuli, here $A_1$ and $A_2$, intersect at a single point in the middle. The part of $P$ near this point
(denoted by $T$ in the picture) is the complement of the Hopf link, that is, of the link of the singularity of type `ordinary double point'. }\label{fig:nearadoublepoint}
\end{figure}
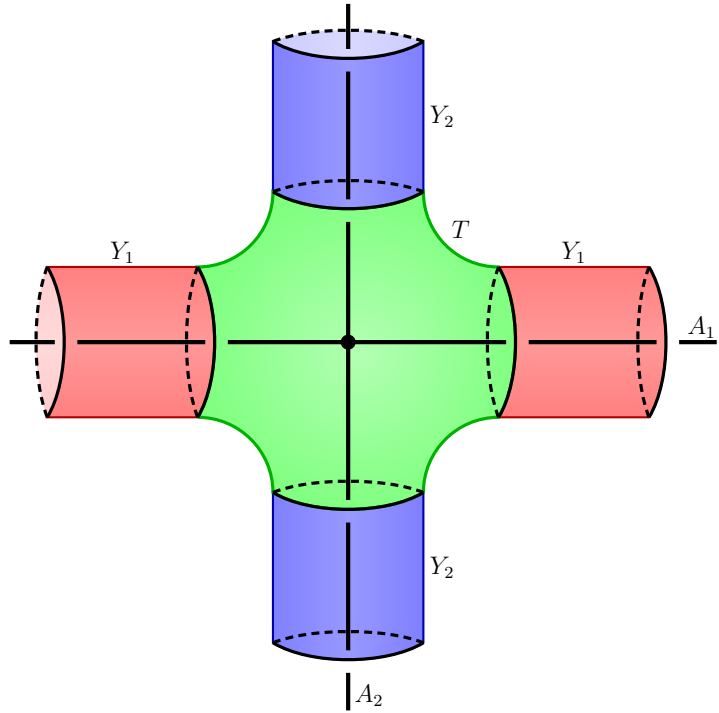
Let $\S$ be a disjoint union of $m$ annuli. If there are no double points, then $P=\S\times S^1$. Otherwise we construct $P$ as follows; compare
\cite[Section 1]{Ne81} and see Figure~\ref{fig:nearadoublepoint}.
For each vertex of the
graph $\Gamma$ we take an annulus with as many discs removed, as is the valency (number of incident edges) of the vertex. Let $Y_1,\ldots,Y_m$ be these punctured
annuli. The number of removed discs in $Y_j$ is $\Delta_j:=d_{jj}+\sum_{i=1}^m d_{ij}$. Let $\p_{j1},\ldots,\p_{j\Delta_j}$ be the boundary components of $Y_j$
corresponding to these discs. The total number of these boundary components
is equal to $2c$.  (Note that each $Y_j$ has two additional boundary components, namely the boundary of the annulus.)

In the reconstruction of $P$ it is convenient to temporarily orient the edges of $\Gamma$, however the output is independent of this orientation.
For each edge of $\Gamma$ take a torus  $S^1\times S^1$. Let $T_1,\ldots,T_c$ be these tori.
Then the manifold $P$ is a union of the products $Y_j\times S^1$ and thickened tori $T_i\times[-1,1]$. The glueing data is encoded in the graph $\Gamma$.
Namely, if the edge corresponding
to $T_i$ starts at the vertex corresponding to $Y_j$, we identify $T_i\times\{-1\}$ with $\p_{jk}\times S^1\subset\partial Y_j\times S^1$, where $k$ is determined
by the combinatorics of the graph. If the edge corresponding to $T_i$ ends at the vertex corresponding to $Y_j$, we identify $T_i\times\{1\}$ with $S^1\times \p_{jk}$.
We remark that this time the coordinates are swapped. Moreover, if the edge is marked with a ``$-$'', the last identification reverses the orientation both of $S^1$ and of $\p_{jk}$;
see \cite[Section 1]{Ne81}.

Denote the canonical maps $T_i\to T_i\times \{\pm 1\}$ by $\iota_{\pm}$.
The Mayer--Vietoris sequence thus gives rise to the following long exact sequence
\begin{equation}\label{eq:mvforplumbing}
\xymatrix@C0.8cm@R-1.0cm{ \dots\ar[r]& \bigoplus\limits_{i=1}^c H_1(T_i;\Z)\ar[r]^-{\iota_--\iota_+} &H_1\big(Y\times S^1;\Z\big)\ar[r] & H_1 (P;\Z)\ar[r]&\\
\ar[r]& \bigoplus\limits_{i=1}^c H_0(T_i;\Z)\ar[r]^-{\iota_--\iota_+} & H_0\big(Y \times S^1;\Z\big)\ar[r]& H_0(P;\Z)\ar[r]&0}
\end{equation}
where $Y := \bigcup_{i=1}^m Y_i$.
Now we have the following claim.

\begin{claim}
The homomorphism
\[  \bigoplus\limits_{i=1}^c H_1(T_i;\Z)\xrightarrow{\iota_--\iota_+} H_1\big(Y\times S^1;\Z\big)\]
splits.
\end{claim}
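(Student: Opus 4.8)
The plan is to postcompose with the homomorphism $\pi_*\colon H_1(Y\times S^1;\Z)\to H_1(Y;\Z)$ induced by the projection $\pi\colon Y\times S^1\to Y$, and to show that this composite already carries a basis of $\bigoplus_{i=1}^c H_1(T_i;\Z)$ bijectively onto part of a basis of $H_1(Y;\Z)$. Since a homomorphism of free abelian groups with this property is an isomorphism onto a direct summand, $\pi_*\circ(\iota_--\iota_+)$ then admits a retraction, and composing that retraction with $\pi_*$ produces a retraction of $\iota_--\iota_+$ itself; so $\iota_--\iota_+$ is a split monomorphism, which is the claim.

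First I would record the structure of $Y=\bigsqcup_{j=1}^m Y_j$. Each $Y_j$ is a planar surface --- a $2$-sphere with $\Delta_j+2$ open discs removed, two of whose boundary circles, say $a_j$ and $b_j$, come from the original annulus, the remaining $\Delta_j$ being the circles $\p_{j1},\dots,\p_{j\Delta_j}$. Hence $H_1(Y_j;\Z)$ is free of rank $\Delta_j+1$, the classes $[\p_{j1}],\dots,[\p_{j\Delta_j}]$ are linearly independent, and they extend to a basis of $H_1(Y_j;\Z)$: the only relation among boundary classes is $[a_j]+[b_j]+\sum_k[\p_{jk}]=0$, so $\{[\p_{j1}],\dots,[\p_{j\Delta_j}],[a_j]\}$ is a basis. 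In particular $\{[\p_{jk}]\}_{j,k}$ extends to a basis of $H_1(Y;\Z)=\bigoplus_{j=1}^m H_1(Y_j;\Z)$.

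Next I would compute $\pi_*\circ(\iota_--\iota_+)$ on generators. Orient the edges of $\Gamma$ as in the construction of $P$, fix a product structure $T_i=S^1\times S^1$, and let $e_i,f_i\in H_1(T_i;\Z)$ be the classes of the first and second $S^1$-factors. By the plumbing construction $T_i\times\{-1\}$ is identified with $\p_{jk}\times S^1$ for the puncture $\p_{jk}$ of $Y_j$ assigned to the tail of edge $i$, and --- with the coordinates swapped --- $T_i\times\{1\}$ is identified with $S^1\times\p_{j'k'}$ for the puncture assigned to the head (some orientations being reversed when the edge is labelled ``$-$''). Since $\pi$ collapses the $S^1$-fibres, we get $\pi_*(\iota_-(e_i))=[\p_{jk}]$, $\pi_*(\iota_+(e_i))=0$, $\pi_*(\iota_-(f_i))=0$ and $\pi_*(\iota_+(f_i))=\pm[\p_{j'k'}]$, so $\pi_*((\iota_--\iota_+)(e_i))=[\p_{jk}]$ and $\pi_*((\iota_--\iota_+)(f_i))=\mp[\p_{j'k'}]$; that is, $e_i$ and $f_i$ map (up to sign) to the tail and head punctures of edge $i$. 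The combinatorial heart of the matter is that assigning to each oriented edge its tail puncture and its head puncture gives a bijection between the $2c$ edge-ends of $\Gamma$ and the $2c$ punctures $\{\p_{jk}\}_{j,k}$ --- this is precisely how the $\Delta_j$ punctures of the various $Y_j$ were allotted. Consequently $\pi_*\circ(\iota_--\iota_+)$ sends the basis $\{e_i,f_i\}_{i=1}^c$ bijectively, up to signs, onto the set $\{[\p_{jk}]\}_{j,k}$, which by the previous paragraph is part of a basis of $H_1(Y;\Z)$; hence this composite is split injective, and so is $\iota_--\iota_+$.

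I expect the only real difficulty to be the bookkeeping in the middle step: pinning down which coordinate circle of each $T_i$ is glued to which boundary circle of which $Y_j$ under the plumbing identifications, and verifying the bijection between edge-ends and punctures. The orientation reversals at negative double points are harmless here, since they only introduce signs on basis vectors and therefore cannot obstruct the passage from ``sends a basis onto part of a basis'' to ``split injective'' --- but it is worth saying this explicitly. Note also that the argument never uses the $S^1$-fibre classes directly, only that $\pi$ kills them, which is what makes factoring through $H_1(Y;\Z)$ so clean.
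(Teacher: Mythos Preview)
Your proof is correct and follows essentially the same approach as the paper: both postcompose $\iota_--\iota_+$ with the projection $H_1(Y\times S^1;\Z)\to H_1(Y;\Z)$ and observe that the result carries the standard basis of $\bigoplus_i H_1(T_i;\Z)$ bijectively (up to signs) onto the set $\{[\p_{jk}]\}$, which generates a direct summand of $H_1(Y;\Z)$. Your write-up is more explicit than the paper's about the individual gluing computations and the bijection between edge-ends and punctures, but the underlying idea is identical.
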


A straightforward argument shows that
the curves $\p_{11},\ldots,\p_{m\Delta_m}$ (their number is $2c$) freely generate a summand
 of $H_1(Y;\Z)$. In particular there is a splitting
\[  s\co H_1(Y;\Z)\to \bigoplus_{\substack{i=1,\ldots,m\\j=1,\ldots,\Delta_i}} \Z \p_{ij}.\]
It follows easily from the glueings that the map
\[ \bigoplus\limits_{i=1}^c H_1(T_i;\Z)\xrightarrow{\iota_--\iota_+} H_1\big(Y\times S^1;\Z\big)
\to H_1(Y;\Z)\xrightarrow{s}\bigoplus_{i,j} \Z \p_{ij}\]
is an isomorphism. This concludes the proof of the claim.

The lemma is now an immediate consequence of the exact sequence
(\ref{eq:mvforplumbing}), the definitions and the fact that the above homomorphism splits. Indeed, the $H_0$ terms in the exact sequence are precisely the terms and maps which compute $H_0(\Gamma)$. Since $H_0(\Gamma;\Z)\cong \Z^E$,
the Mayer--Vietoris exact sequence \eqref{eq:mvforplumbing} reduces to
\[  \Z^{2c}\to \Z^{2c+2m}\to H_1(P;\Z)\to \Z^c\to \Z^m\to \Z^E\to 0\]
where the left hand homomorphism splits. We thus see that $H_1(P;\Z)\cong \Z^{2m}\oplus \Z^{c-m+E}$.
By $E-D=m-c$ it follows that $H_1(P)\cong \Z^{2m+D}$. The statement that $H_2(P;\Z)\cong \Z^{2m+D-E}$ also follows from the Mayer--Vietoris sequence. Alternatively it follows from an Euler characteristic argument and the observation that $H_2(P;\Z)\cong H^1(P,\partial P;\Z)=\hom(H_1(P,\partial P;\Z),\Z)$ is torsion-free.
\end{proof}

\begin{proof}[Proof of Lemma~\ref{lem:homology-of-W}]
To compute the integral homology of $W$ we use the Mayer--Vietoris sequence associated to the decomposition $S^3 \times I \cong W \cup_P \nu A$.  For $H_1(W;\Z)$, we have an exact sequence:
\[H_2(S^3 \times I;\Z) \to H_1(P;\Z) \to H_1(W;\Z) \oplus H_1(\nu A;\Z) \to H_1(S^3 \times I;\Z).\]
As $\nu A$ strongly retracts onto $A$,
there is an homotopy equivalence
$$\nu A \simeq ((\Gamma \vee S^1)\vee S^1) \vee \dots \vee S^1,$$
where there are $m$ copies of $S^1$, one attached to each vertex of $\G$.  That is, we change the basepoint for each wedge sum.  Therefore $H_1(\nu A;\Z) \cong \Z^{m+D}$ and the exact sequence above becomes:
\[0 \to \Z^{2m+D} \to H_1(W;\Z) \oplus \Z^{m+D} \to 0.\]
It follows that $H_1(W;\Z) \cong \Z^m \cong \ker(H_1(P;\Z) \to H_1(\nu A;\Z))$.  This kernel is freely generated by the meridians to $L$ (or to $J$).  Therefore the inclusion induced maps $H_1(X_L;\Z) \to H_1(W;\Z)$ and $H_1(X_L;\Z) \to H_1(W;\Z)$ are isomorphisms.  It is now straightforward to see that the
maps $H_1(W;\Z)\xleftarrow{\cong}H_1(X_L;\Z)\xrightarrow{\phi_L}\Z^m$ and
$H_1(W;\Z)\xleftarrow{\cong}H_1(X_J;\Z)\xrightarrow{\phi_J}\Z^m$ agree.
We denote this isomorphism $H_1(W;\Z)\to \Z^m$ by $\phi_W$.
By Poincar\'{e}-Lefschetz duality, $H_3(W;\Z) \cong H^1(W;\partial W;\Z)$, which fits into the short exact sequence
\[0 \to \Ext^1_{\Z}(H_0(W,\partial W;\Z),\Z) \to H^1(W,\partial W;\Z)  \to \Ext^0_{\Z}(H_1(W,\partial W;\Z),\Z)  \to 0\]
by the universal coefficient theorem.  Since $\partial W$ is connected we have an isomorphism $H_0(\partial W;\Z) \toiso H_0(W;\Z)$.  Therefore $H_0(W,\partial W;\Z) =0$
and the map $H_1(W;\Z) \to H_1(W,\partial W;\Z)$ is surjective.  However we just saw that the meridians of $L$ generate  $H_1(W;\Z)$.  Since the meridians of~$L$ lie in $\partial W$, the image of the map $H_1(W;\Z) \to H_1(W,\partial W;\Z)$ is zero.  Thus $H_1(W,\partial W;\Z)$ vanishes, from which we see that $H_3(W;\Z) =0$.

Next note that $H_2(W;\Z)$ is torsion free.  To see this, observe that the torsion subgroup of $H_2(W;\Z)$ is a subgroup of $H^3(W;\Z)$ by the universal coefficient theorem, but $H^3(W;\Z) \cong H_1(W,\partial W;\Z) =0$ by Poincar\'{e}-Lefschetz duality.  Therefore to find $H_2(W;\Z)$ it suffices to know its rank.

Now we may compute with the Euler characteristic.  First $\chi(S^3 \times I) = \chi(S^3) = 0$.  Also by Lemma \ref{lem:homology-of-P} we have $\chi(P) = 0$, so $0 = \chi(S^3 \times I) = \chi(W) + \chi(\nu A) - \chi(P) = \chi(W) + \chi(\nu A)$.  As above, $\nu A$ is homotopy equivalent to a graph $\G'$ with $\beta_0(\G')=E$ and $\beta_1(\G') = m+D$.  Therefore $\chi(\nu A) = E-D-m$, from which we see that $\chi(W) = m+D-E$.   Then note that $\chi(\G) = E-D = m-c$, computing in the first instance using the Betti numbers of $\G$ and in the second instance using the fact that there are $m$ 0-cells and $c$ 1-cells in a cell decomposition of $\G$.  Thus $$\chi(W) = m+D-E = m+c-m = c.$$
From this we may compute the rank of $H_2(W;\Z)$. Since $\beta_0(W) =1$ and $\beta_1(W) = m$, we have $c=\chi(W) = 1-m+\rk H_2(W;\Z)$, so that $H_2(W;\Z) \cong \Z^{m+c-1}$ as claimed.
\end{proof}

\begin{proof}[Proof of Lemma~\ref{lem:ord-Lambda-hom-P-negligible}]
In the proof of Lemma~\ref{lem:homology-of-P} we constructed $P$ as a union of $Y_1\times S^1,\ldots,Y_m\times S^1$, where $Y_1,\ldots,Y_m$ are punctured
annuli, and thickened tori $T_1\times[-1,1],\ldots,T_c\times[-1,1]$. In the present proof we use the same description of $P$.

First we show that $H_*(P;\O)=0$. To this end, consider the short exact sequence of chain complexes
\begin{multline*}
0 \to \bigoplus_{\substack{i=1,\ldots,c\\u=\pm 1}} C_*(T_i\times\{u\};\O) \to \\[-5mm]
\to \bigoplus_{i=1}^m C_*(Y_i \times S^1;\Omega) \oplus \bigoplus_{i=1,\ldots,c} C_*(T_i;\O)
 \to C_*(P;\Omega) \to  0. \end{multline*}
The $\O$ coefficient system for any subset $V \subseteq P$ is defined via the map $\pi_1(V) \to \pi_1(P) \to \L \to \O$.  All of these chain complexes are acyclic.  To see this, note that by the associated Mayer--Vietoris sequence, this will follow once we see that the $\O$ coefficient homology is trivial for all complexes apart from $C_*(P;\O)$.  The homology $H_*(S^1;\O)=0$ whenever the generator of $\pi_1(S^1)$ maps nontrivially into $\O$.  Then $H_*(X \times S^1;\O) =0$ for any $X$ by
Lemma \ref{lemma:torsion-X-times-S1}.
This accounts for all the remaining terms, and thus completes the proof of the first part of the lemma, that $H_*(P;\O) =0$.

Given that $H_*(P;\O)=0$, we can compute the Reidemeister torsion of $P$;
from this we will be able to deduce the order of the first homology with $\L$ coefficients.
The representation $\pi_1(P) \to H_1(P;\Z) \to H_1(W;\Z) \to \Z^m = \ll t_1,\dots,t_m \rr$ sends a meridian of $A_i$ to $t_i$. Therefore we may apply
Lemma~\ref{lemma:torsion-X-times-S1} to see that
\[\tau(Y_i \times S^1) = (t_i-1)^{-\chi(Y_i)} = (t_i-1)^{\Delta_i}\]
since $Y_i$ is an annulus with $\Delta_i := d_{ii}+\sum_{j=1}^m d_{ij}$ punctures.  Both $S^1 \times S^1$ and $S^1 \times S^1 \times I$ have Reidemeister torsion $1$, by a further application of Lemma~\ref{lemma:torsion-X-times-S1} with $X=S^1$ and $X=S^1 \times I$ respectively.

As $P$ is presented as a union of thickened tori $T_1\times[-1,1],\ldots,T_c\times[-1,1]$ and $Y_1\times S^1,\ldots,Y_m\times S^1$ along tori,
    the glueing formula of Theorem~\ref{thm:mult_of_torsions_in_ses} yields the formula
\be \label{equ:torsionp} \tau(P) = \prod_{i=1}^m (t_i-1)^{\Delta_i},\ee
in particular  $\tau(P)$ is negligible. By \cite[Theorem~4.7]{Tu01} we have
\[\tau(P) = \prod_{i=0}^2 (\ord H_i(P;\L))^{(-1)^i}.\]
Thus it suffices to show that $\ord H_0(P;\L)$ and $\ord H_2(P;\L)$ are negligible.

It follows immediately from Lemma \ref{lem:h0} and from the definitions that the order of  $H_0(P;\L)$  is $1$ if $m \geq 2$, and $t-1$ if $m=1$. In both cases $\ord H_0(P;\L)$ is negligible.
Now we turn to $H_2(P;\L)$. Note that $P$ is homotopy equivalent to a 2-complex $P'$. Therefore $H_2(P;\L)=H_2(P';\L)$ is a submodule of the free $\L$-module $C_2(P';\L)$. In particular $H_2(P;\L)$ is torsion-free.
By the first part of the lemma
 we know that $H_2(P;\L)\otimes_{\L}\Omega=H_2(P;\Omega)=0$. It follows that $H_2(P;\L)=0$, in particular $\ord H_2(P;\L)=1$.
\end{proof}

\begin{remark}
  Since $(t_i-1)^2$ is a norm, and each self-intersection contributes $(t_i-1)^2$, we see the linking number differences between the two links $L$ and $J$ determine the negligible terms up to norms, just as in~\cite{Ka13}.
\end{remark}

\smallskip

Now we have assembled the necessary ingredients, we throw them into the pre-heated sizzling pan of long exact sequences that is the proof of Theorem~\ref{mainthm}.

\begin{proof}[Proof of Theorem~\ref{mainthm}]
Let $L$ and $J$ be  $m$-component links. We write $c=c(L,J)$.
We begin by studying the ranks $\b(L)$ and $\b(J)$. Without loss of generality we can assume that $\b(J)\geq \b(L)$.  We then have the following claim which in particular proves the first statement of the theorem.

\begin{claim}
We have $\b(J)\in \{\b(L),\dots,\b(L)+c\}$.
\end{claim}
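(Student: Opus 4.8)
The plan is to work entirely with the exterior $W$ of the immersed concordance and the boundary decomposition $\partial W = X_L\cup_{\partial X_L} P\cup_{\partial X_J} X_J$, exploiting that $P$ is $\Omega$-acyclic. Each torus component of $\partial X_L$ and of $\partial X_J$ carries a nontrivial coefficient system (a meridian hits some $t_i\neq 1$), so it is $\Omega$-acyclic by the remark after Lemma~\ref{lemma:torsion-X-times-S1}; together with $H_*(P;\Omega)=0$ from Lemma~\ref{lem:ord-Lambda-hom-P-negligible} and a Mayer--Vietoris sequence this shows that the inclusions $X_L\hookrightarrow X_L\cup P$ and $X_J\hookrightarrow X_J\cup P$ are $\Omega$-homology isomorphisms. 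Also, by Lemma~\ref{lem:h0}, $H_0$ of $X_L$, $X_J$ and $W$ with $\Omega$-coefficients vanishes, and the last part of Theorem~\ref{thm:pd} applied to the $3$-manifolds $X_L$, $X_J$ gives $\dim_\Omega H_1(X_L;\Omega)=\dim_\Omega H_2(X_L;\Omega)=\beta(L)$ with the groups in degrees $0,3$ vanishing, and likewise for $J$.

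The main point is to prove that $H_*(W,X_L;\Omega)$ and $H_*(W,X_J;\Omega)$ are each isomorphic to $\Omega^{\,c}$ concentrated in degree $2$. View $W$ as a cobordism from $X_L$ to $X_J$ with lateral boundary $P$; it is obtained from $X_L\times I$ by attaching handles of index $1,2,3$, and since $X_L$, $X_J$ are connected one can dispense with $0$- and $4$-handles and, by a standard handle-slide and cancellation argument (valid here since the attaching circle of a $2$-handle meets the belt $2$-sphere of a $1$-handle inside a $3$-manifold, using $H_1(W,X_L;\mathbb{Z})=0$ from Lemma~\ref{lem:homology-of-W}), with all $1$-handles as well. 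By the same lemma the inclusion induces an isomorphism $H_1(X_L;\mathbb{Z})\xrightarrow{\cong}H_1(W;\mathbb{Z})$, so attaching the remaining $2$-handles changes nothing in $H_1(-;\mathbb{Z})$; hence every $2$-handle attaching circle is null-homologous, in particular maps trivially to $\mathbb{Z}^m$, hence lifts to the $\mathbb{Z}^m$-cover. Therefore $C_*(W,X_L;\Lambda)$ is concentrated in degrees $2$ and $3$, and tensoring with the flat module $\Omega$ shows $H_*(W,X_L;\Omega)$ is concentrated in degrees $2$ and $3$ with $\dim H_2-\dim H_3=\chi(W,X_L)=\chi(W)=c$ (using $\chi(X_L)=0$ and Lemma~\ref{lem:homology-of-W}). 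Finally Poincar\'e--Lefschetz duality for $W$ with $\partial W$ split as $X_L\sqcup(X_J\cup P)$ gives $H_3(W,X_L;\Omega)\cong\overline{H^1(W,X_J\cup P;\Omega)}\cong\overline{H^1(W,X_J;\Omega)}$, using the $\Omega$-homology equivalence $X_J\hookrightarrow X_J\cup P$; but running the handle argument with $X_J$ in place of $X_L$ gives $H_1(W,X_J;\Omega)=0$, so the degree-$3$ term vanishes and $\dim_\Omega H_2(W,X_L;\Omega)=c$. By the symmetric argument the same holds for $H_*(W,X_J;\Omega)$.

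Now feed $H_*(W,X_L;\Omega)=\Omega^{\,c}$ (in degree $2$) into the long exact sequence of $(W,X_L)$ with $\Omega$-coefficients. Since $H_1(W,X_L;\Omega)=0$, the map $H_1(X_L;\Omega)\to H_1(W;\Omega)$ is onto, so $\dim_\Omega H_1(W;\Omega)\leq\beta(L)$; and its kernel is the image of $\partial\colon\Omega^{\,c}=H_2(W,X_L;\Omega)\to H_1(X_L;\Omega)$, of dimension at most $c$, so $\dim_\Omega H_1(W;\Omega)\geq\beta(L)-c$. The same argument with $X_J$ (for the same $W$) gives $\beta(J)-c\leq\dim_\Omega H_1(W;\Omega)\leq\beta(J)$. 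Comparing the two pairs of inequalities yields $\beta(J)-c\leq\beta(L)$ and $\beta(L)-c\leq\beta(J)$, i.e.\ $|\beta(L)-\beta(J)|\leq c$; with the standing hypothesis $\beta(J)\geq\beta(L)$ this is exactly the Claim. The main obstacle is the middle paragraph: identifying $H_*(W,X_L;\Omega)$, which is where the count $c$ of double points genuinely enters and which needs both a handle-theoretic input (to control $C_*(W,X_L;\Lambda)$ in low degrees) and the duality-plus-$P$-acyclicity trick (to kill the potential degree-$3$ contribution); everything after that is routine diagram chasing.
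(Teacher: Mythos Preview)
Your overall strategy --- bounding $\dim_\Omega H_1(W;\Omega)$ from both sides via the long exact sequences of $(W,X_L)$ and $(W,X_J)$, after showing $H_*(W,X_L;\Omega)$ is $\Omega^{\,c}$ concentrated in degree~$2$ --- is sound and does yield the claim. But the handle-cancellation step has a genuine gap.

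You assert that $H_1(W,X_L;\Z)=0$ lets you cancel all $1$-handles by ``a standard handle-slide and cancellation argument''. In a $4$-dimensional cobordism, cancelling a $1$/$2$ pair requires the $2$-handle attaching circle to run over the $1$-handle \emph{geometrically} once. Your hypothesis only makes the $\Z$-intersection matrix surjective, so after slides you get \emph{algebraic} intersection $\pm 1$. Reducing algebraic to geometric intersection of a circle with a nonseparating $S^2$ in a $3$-manifold is a Whitney-type problem: the arc of the attaching circle between two cancelling intersection points must be homotoped rel endpoints into the belt sphere, and that is a $\pi_1$ condition on the complement, not an $H_1$ condition. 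The usual alternative (trading $1$-handles for $3$-handles) likewise needs $\pi_1(X_L)\to\pi_1(W)$ onto, which you have not established. So ``$C_*(W,X_L;\Lambda)$ concentrated in degrees $2,3$'' is not justified.

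The paper sidesteps this entirely: it gets $H_1(W,X_L;\Omega)=0$ and $H_1(W,X_J;\Omega)=0$ directly from $H_i(-;\Z)=0$ for $i\le 1$ by \cite[Proposition~2.10]{COT03}, a purely algebraic statement for PTFA coefficient systems (in particular for $\Z^m$). With that in hand your Poincar\'e--Lefschetz step (using $H_*(P;\Omega)=0$ to pass from $X_J\cup P$ to $X_J$) gives $H_3(W,X_L;\Omega)=0$, and then $\chi(W,X_L)=c$ gives $\dim H_2(W,X_L;\Omega)=c$; your final inequalities are then correct. The paper's own endgame is a minor rearrangement of yours: rather than computing $H_2(W,X_L;\Omega)$, it uses $H_3(W,X_J;\Omega)=0$ to inject $H_2(X_J;\Omega)\hookrightarrow H_2(W;\Omega)$ and combines $\dim H_1(W;\Omega)\le\beta(L)$ with $\chi(W)=c$. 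So: replace your handle paragraph by a citation to \cite[Proposition~2.10]{COT03}, and the argument is complete and essentially parallel to the paper's.
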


Consider the long exact sequences of the pairs $(W,X_L)$ and $(W,X_{J})$ with $\Omega$ coefficients:
\begin{eqnarray*}
 \dots  \longrightarrow & H_3(X_L;\O) \longrightarrow & H_3(W;\O) \longrightarrow H_3(W,X_L;\O) \longrightarrow \\
 \longrightarrow &H_2(X_L;\O)  \longrightarrow & H_2(W;\O) \longrightarrow H_2(W,X_L;\O) \longrightarrow \\
 \longrightarrow & H_1(X_L;\O)  \longrightarrow & H_1(W;\O) \longrightarrow H_1(W,X_L;\O) \longrightarrow \dots \\
\end{eqnarray*}
and
\begin{eqnarray*}
\dots \longrightarrow & H_3(X_J;\O) \longrightarrow & H_3(W;\O) \longrightarrow H_3(W,X_J;\O) \longrightarrow \\
 \longrightarrow &H_2(X_J;\O)  \longrightarrow & H_2(W;\O) \longrightarrow H_2(W,X_J;\O) \longrightarrow \\
 \longrightarrow & H_1(X_J;\O)  \longrightarrow & H_1(W;\O) \longrightarrow H_1(W,X_J;\O) \longrightarrow \dots \\
\end{eqnarray*}

We investigate the dimensions of the terms in these sequences.  Let
\[\beta:= \beta(L) = \dim H_1(X_L;\O).\]
By Lemma \ref{lem:h0} we have $H_0(X_L;\O)= 0$. Since $X_L$ is homotopy equivalent to a 2-complex, we have $H_3(X_L;\O) =0$.  The Euler characteristic of $X_L$ is zero since $X_L$ is a 3-manifold with a toroidal boundary.  Therefore $\dim H_2(X_L;\O) =\beta$.

Next by Lemma~\ref{lem:homology-of-W} we have $H_i(W,X_L;\Z) =0$ for $i=0,1$.
Therefore $H_1(W,X_L;\O) =0$ by \cite[Proposition~2.10]{COT03}.

\begin{claim}
We have  $H_3(W,X_J;\Omega)=0$.
\end{claim}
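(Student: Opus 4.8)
The plan is to deduce the claim from Poincar\'e--Lefschetz duality together with the vanishing $H_1(W,X_L;\Omega)=0$ established in the preceding paragraph.

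First I would record two auxiliary vanishing statements. Each component of $\partial X_L$ is a torus on which the coefficient system sends the meridian to a nontrivial element $t_i\ne 1$, so by Lemma~\ref{lemma:torsion-X-times-S1} (applied with $X=S^1$) each such torus is $\Omega$-acyclic; hence $H_*(\partial X_L;\Omega)=0$. (Equivalently, this is the already-observed $H_*(\partial X_L;\L_S)=0$ base-changed along the flat inclusion $\L_S\hookrightarrow\Omega$.) Together with $H_*(P;\Omega)=0$ from Lemma~\ref{lem:ord-Lambda-hom-P-negligible}, the Mayer--Vietoris sequence of the decomposition $X_L\cup_{\partial X_L}P$ shows that inclusion induces an isomorphism $H_*(X_L;\Omega)\xrightarrow{\cong}H_*(X_L\cup_{\partial X_L}P;\Omega)$. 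Feeding this, together with the identity on $H_*(W;\Omega)$, into the five lemma comparison of the long exact sequences of the pairs $(W,X_L)$ and $(W,X_L\cup_{\partial X_L}P)$, we obtain an isomorphism $H_*(W,X_L;\Omega)\xrightarrow{\cong}H_*(W,X_L\cup_{\partial X_L}P;\Omega)$. In particular $H_1(W,X_L\cup_{\partial X_L}P;\Omega)=0$.

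Now $W$ is a compact oriented $4$-manifold whose boundary decomposes as $\partial W=X_J\cup(X_L\cup_{\partial X_L}P)$, the two pieces meeting along $\partial X_J$. Poincar\'e--Lefschetz duality (Theorem~\ref{thm:pd}, with $i=3$, $k=4$) therefore gives
\[ H_3(W,X_J;\Omega)\;\cong\;\ol{H^{1}(W,X_L\cup_{\partial X_L}P;\Omega)}.\]
Since $\Omega$ is a field, the universal coefficient theorem identifies the right-hand group with $\hom_\Omega\!\big(H_1(W,X_L\cup_{\partial X_L}P;\Omega),\Omega\big)$, there being no $\ext$ contribution over a field; by the previous paragraph this vanishes. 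Hence $H_3(W,X_J;\Omega)=0$, proving the claim.

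The only genuine idea is the first reduction: because $P$ and $\partial X_L$ are $\Omega$-acyclic, the complementary half of the boundary $X_L\cup_{\partial X_L}P$ is interchangeable with $X_L$ as far as $\Omega$-coefficient (co)homology is concerned, so that under duality the claim becomes the vanishing we already have. The remaining work---the Mayer--Vietoris and five lemma bookkeeping, and checking that $(W;\,X_J,\,X_L\cup_{\partial X_L}P)$ is a manifold triad to which Theorem~\ref{thm:pd} applies---is routine, and I expect the triad check (smoothing the corners of $\partial W$ along the tori $\partial X_J$) to be the main, and rather mild, obstacle.
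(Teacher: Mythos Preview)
Your argument is correct. Both you and the paper begin with the same Poincar\'e--Lefschetz duality step, identifying $H_3(W,X_J;\Omega)$ with $H^1(W,\ol{\partial W\sm X_J};\Omega)$ and then with $H_1(W,\ol{\partial W\sm X_J};\Omega)$ (note $\ol{\partial W\sm X_J}=X_L\cup_{\partial X_L}P$). The difference lies in how the vanishing of this last group is obtained. The paper appeals a second time to \cite[Proposition~2.10]{COT03}, reducing to $H_1(W,\ol{\partial W\sm X_J};\Z)=0$, and then computes this integral group via the long exact sequence of the triple $(W,\ol{\partial W\sm X_J},X_L)$. You instead stay with $\Omega$ coefficients throughout: using the $\Omega$-acyclicity of $P$ (Lemma~\ref{lem:ord-Lambda-hom-P-negligible}) and of $\partial X_L$, you show that $H_*(W,X_L;\Omega)\cong H_*(W,X_L\cup_{\partial X_L}P;\Omega)$ and then invoke the already-established $H_1(W,X_L;\Omega)=0$. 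Your route is arguably cleaner here, since it recycles the $\Omega$-acyclicity of $P$ rather than making a fresh appeal to \cite{COT03}; the paper's route, on the other hand, yields the slightly stronger integral statement $H_1(W,\ol{\partial W\sm X_J};\Z)=0$ along the way.
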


By Theorem \ref{thm:pd} the module  $H_3(W,X_J;\Omega)$
is  isomorphic to $H^1(W,\ol{\partial W\sm X_J};\O) \cong H_1(W,\ol{\partial W\sm X_J};\O)$. Appealing again to
\cite[Proposition~2.10]{COT03} we see that
  $H_3(W,X_J;\Omega) =0$  if $H_1(W,\ol{\partial W\sm X_J};\Z) =0$. To see that  $H_1(W,\ol{\partial W\sm X_J};\Z) =0$, we consider the long exact sequence of the triple $(W, \ol{\partial W\sm X_J}, X_L)$:
\[H_1(\ol{\partial W\sm X_J}, X_L;\Z) \to H_1(W,X_L;\Z) \to H_1(W, \ol{\partial W\sm X_J};\Z) \to H_0(\ol{\partial W\sm X_J}, X_L;\Z). \]
As we saw above, by Lemma~\ref{lem:homology-of-W}, $H_1(W,X_L;\Z) =0$.  Also $\ol{\partial W\sm X_J}$ is connected, so $H_0(\ol{\partial W\sm X_J}, X_L;\Z) =0$.  Thus $H_1(W,\ol{\partial W\sm X_J};\Z) =0$, as desired. This concludes the proof of the claim.

Using the exact sequence of the pair $(W,X_J)$ with $\Omega$ coefficients
provided  above and the facts that $H_3(W,X_J;\Omega) =0 \cong H_3(X_J;\O)$, we find that $H_3(W;\O) =0$.
We may also reverse the r\^{o}les of $L$ and $J$, so that also $H_3(W,X_L;\Omega) =0$.

Suppose that $\dim H_1(X_J;\O) = \beta + \ell$, where $\ell >0$: recall that without loss of generality we supposed that $\beta(J) \ge \beta(L)$.
It follows from $H_0(X_J;\O)=0$ and the usual Euler characteristic argument that $\dim H_2(X_J;\O) = \beta + \ell$.
  Next, since $H_1(W,X_L;\O) =0$ the map $$H_1(X_L;\O) \to H_1(W;\O)$$ is a surjection, so $\dim H_1(W;\O) \leq \beta$.  We also see that $H_3(W,X_J;\O)  =0$ implies $$H_2(X_J;\O) \to H_2(W;\O)$$ is an injection, so $\dim H_2(W;\O) \geq \beta + \ell$.

The only potentially nontrivial homology groups of $W$ with $\Omega$ coefficients are $H_1(W;\O)$ and $H_2(W;\O)$, since $H_0(W;\O) =0$, again by Lemma \ref{lem:h0}.  The Euler characteristic of $W$ is $\chi(W) = c$ by Lemma~\ref{lem:homology-of-W}, from which it follows that $\dim H_2(W;\O) = \dim H_1(W;\O) + c$.  Combining this with the fact that $\dim H_1(W;\O) \leq \beta$ yields $\dim H_2(W;\O) \leq \beta +c$.   Together the inequalities $$\beta +\ell \leq \dim H_2(W;\O) \text{ and } \dim H_2(W;\O) \leq \beta +c$$ yield $\ell \leq c$, which says that $\beta(J) - \beta(L) \leq c$.  We assumed without loss of generality that $\beta(J) \geq \beta(L)$, so this  completes the proof of the claim above that $\b(J)\in \{\b(L),\dots,\b(L)+c\}$, and therefore also the proof of the first part of the theorem.

\smallskip

Now we turn to the proof of the second statement.
First we note that  the Witt sum $\Bl_{L} \oplus -\Bl_J$ is isomorphic to $\Bl_{\partial W}$.  Indeed, by  Lemma~\ref{lem:ord-Lambda-hom-P-negligible} the  homology of $P$ with $\L_S$ coefficients is trivial. Using this observation, the argument on \cite[p.~39]{Hi12} carries over to give the desired statement on Blanchfield forms.
We leave the details to the reader.

Thus in light of Proposition \ref{prop:hillman-metabolic}, in order to see that the Witt sum of Blanchfield forms of the links $L$ and $-J$ is metabolic, it suffices to prove the following claim.

\begin{claim}
If $\beta(J) = c(L,J) + \beta(L)$, then the sequence
\[TH_2(W,\partial W;\L_S) \to TH_1(\partial W;\L_S) \to TH_1(W;\L_S)\]
is exact.
\end{claim}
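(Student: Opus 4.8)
The plan is to obtain exactness of the stated torsion sequence formally from the ordinary long exact sequence of the pair $(W,\partial W)$, using two ingredients already in hand. The first is that $\Omega$, being the quotient field of $\L_S$, is a flat $\L_S$-algebra, so that $H_*(-;\Omega)=H_*(-;\L_S)\otimes_{\L_S}\Omega$ and, in each degree, the kernel of the localisation map $M\to M\otimes_{\L_S}\Omega$ is precisely the torsion submodule $TM$; here every module in sight is finitely generated over the Noetherian ring $\L_S$ because $W$ is compact. The second is the dimension count carried out in the first part of this proof: it shows that in the equality case $\b(J)=c(L,J)+\b(L)$ one has $\dim_\Omega H_2(W;\Omega)=\b(L)+c=\dim_\Omega H_2(X_J;\Omega)$, and it also establishes $H_3(W,X_J;\Omega)=0$.

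First I would dispose of the easy inclusion: if $y\in TH_2(W,\partial W;\L_S)$ then $\partial y$ is torsion and lies in $\im\partial=\ker\bigl(H_1(\partial W;\L_S)\to H_1(W;\L_S)\bigr)$, so $\partial\bigl(TH_2(W,\partial W;\L_S)\bigr)$ is contained in the kernel of $TH_1(\partial W;\L_S)\to TH_1(W;\L_S)$. For the reverse inclusion, take $x\in TH_1(\partial W;\L_S)$ mapping to $0$ in $H_1(W;\L_S)$. Exactness of the ordinary sequence gives $x=\partial y$ for some $y\in H_2(W,\partial W;\L_S)$, and the point is to show that $y$ may be replaced by a torsion class. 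Let $\bar y$ be the image of $y$ in $H_2(W,\partial W;\Omega)$. Since $x$ is torsion it vanishes in $H_1(\partial W;\Omega)$, so by naturality of the connecting map $\partial\bar y=0$, and hence by the $\Omega$-coefficient long exact sequence $\bar y$ lies in the image of the map $j\colon H_2(W;\Omega)\to H_2(W,\partial W;\Omega)$, say $\bar y=j(\bar z)$.

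The crux is then the assertion that the inclusion-induced map $H_2(\partial W;\Omega)\to H_2(W;\Omega)$ is surjective in the equality case. To prove this I would restrict attention to the submanifold $X_J\subseteq\partial W$: the inclusion $X_J\hookrightarrow W$ factors through $\partial W$, so it suffices to show $H_2(X_J;\Omega)\to H_2(W;\Omega)$ is onto. This map is injective because $H_3(W,X_J;\Omega)=0$, and it is a linear map between $\Omega$-vector spaces of the same dimension $\b(L)+c$ by the dimension count recalled above, hence an isomorphism. Therefore $\bar z$ comes from $H_2(\partial W;\Omega)$, and since the composite $H_2(\partial W;\Omega)\to H_2(W;\Omega)\xrightarrow{j}H_2(W,\partial W;\Omega)$ is zero, we get $\bar y=j(\bar z)=0$. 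Consequently $y\in\ker\bigl(H_2(W,\partial W;\L_S)\to H_2(W,\partial W;\Omega)\bigr)=TH_2(W,\partial W;\L_S)$ while still $\partial y=x$, which finishes the proof of the claim.

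I do not expect any single step to be hard; the part that will need the most care is the book-keeping identifying, slot by slot, the torsion submodule of an $\L_S$-coefficient homology group with the kernel of the natural map to the corresponding $\Omega$-coefficient group, and keeping the two long exact sequences compatible under the localisation maps. It is worth noting that the hypothesis $\b(J)=c(L,J)+\b(L)$ enters exactly once, to make $H_2(\partial W;\Omega)\to H_2(W;\Omega)$ surjective; without it this surjectivity, and hence the claim, can fail.
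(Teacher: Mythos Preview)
Your proof is correct and follows essentially the same strategy as the paper: both reduce the claim to the injectivity of the connecting map $\partial\colon H_2(W,\partial W;\Omega)\to H_1(\partial W;\Omega)$, equivalently the surjectivity of $H_2(\partial W;\Omega)\to H_2(W;\Omega)$, and then use that torsion is the kernel of localisation. The paper packages the diagram chase as an appeal to the sharp $3\times 3$ lemma, whereas you carry it out by hand; the one genuine difference is in how surjectivity is established. The paper computes $\dim_\Omega H_i(\partial W;\Omega)$ via the Mayer--Vietoris decomposition $\partial W=X_L\cup P\cup X_J$ (invoking $H_*(P;\Omega)=0$) and then observes that the alternating dimension count forces the long exact sequence of $(W,\partial W)$ to split into two short exact sequences. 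Your route through $X_J$ alone---using that $H_2(X_J;\Omega)\to H_2(W;\Omega)$ is injective (since $H_3(W,X_J;\Omega)=0$) between spaces of equal dimension $\beta+c$---is slightly more economical, as it bypasses the Mayer--Vietoris step.
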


In the notation of our proof the assumption that $\beta(J) = c(L,J) + \beta(L)$ implies that $\beta + \ell = c + \beta$, so that $c=\ell$. Therefore $\dim H_2(W;\Omega) = \beta+c$.  The Euler characteristic implies that $\dim H_1(W;\O) = \beta$.  Now we consider (\ref{eqn:les-W-partial-W-Omega-coeffs}), i.e.\ the long exact sequence of the pair $(W,\partial W)$ with $\Omega$ coefficients.  Underneath each entry we write its dimension, for the convenience of the reader, which we will then proceed to justify.

\begin{equation}\label{eqn:les-W-partial-W-Omega-coeffs}
\begin{array}{ccccccccc}
0\longrightarrow&H_3(W,\partial W;\Omega) & \longrightarrow & H_2(\partial W;\Omega) &\longrightarrow& H_2(W;\Omega) & \longrightarrow \\
&\beta &&2\beta +c&&\beta +c&\\
&H_2(W,\partial W;\Omega) & \longrightarrow & H_1(\partial W;\Omega) &\longrightarrow& H_1(W;\Omega) & \longrightarrow &0\\
&\beta +c&&2\beta +c&&\beta &
\end{array}
\end{equation}

By Theorem \ref{thm:pd} and by the above calculations we have $\dim H_3(W,\partial W;\O) = \dim H_1(W;\O) = \beta$ and $\dim H_2(W,\partial W;\O) = \dim H_2(W;\O) = \beta+c$.

Finally we also have  $\dim H_1(\partial W;\O) =2\b+c$. Indeed, by Lemma \ref{lemma:torsion-X-times-S1} and Lemma \ref{equ:torsionp} we have $H_*(\partial X_L;\Omega)=H_*(\partial X_J;\Omega)=H_*(P;\Omega)=0$. The  Mayer--Vietoris sequence for $\partial W = X_L \cup P \cup X_J$  with $\Omega$ coefficients then implies the desired equality
\[  \dim H_1(\partial W;\O) = \dim H_1(X_L;\O)  + \dim H_1(X_J;\O)    = 2\beta + c.\]

A quick look at the dimensions in the  long exact sequence (\ref{eqn:les-W-partial-W-Omega-coeffs})
  shows that the long exact sequence splits into two short exact sequences.
Now consider the following commutative diagram:
\[ \xymatrix{
&0\ar[d]&0\ar[d]&0\ar[d]\\
&TH_2(W,\partial W;\L_S)\ar[d]\ar[r] &TH_1(\partial W;\L_S)\ar[d]\ar[r] &TH_1(W;\L_S)\ar[d] \\
&H_2(W,\partial W;\L_S)\ar[d]\ar[r] &H_1(\partial W;\L_S)\ar[d]\ar[r] &H_1(W;\L_S)\ar[d] \\
0\ar[r]&H_2(W,\partial W;\Omega) \ar[r]&H_1(\partial W;\Omega)\ar[r] &H_1(W;\Omega).}\]
Note that the vertical sequences are exact. Also note that the middle horizontal sequence is exact. Furthermore, we have just shown that the  bottom  horizontal sequence is also exact. It follows from elementary diagram chasing (this is known as the sharp $3 \times 3$ lemma~\cite[Lemma~2]{FHH89}) that the top horizontal sequence is also exact.
\end{proof}

\section{The Gordian distance between links}
\label{section:gordian}

\subsection{Proof  of Theorem \ref{lem:torsionpolynomials}}
For the reader's convenience we recall the statement of Theorem \ref{lem:torsionpolynomials}.
\\

\noindent \textbf{Theorem \ref{lem:torsionpolynomials}.}\emph{
Let $L$ and $J$ be two $m$-component links.
Then
\[ |\b(L)-\b(J)|\leq g(L,J).\]
Furthermore, if  $\b(J)=\b(L)+g(L,J)$, then
\[  \deltator_L=\deltator_{J}\cdot f\, \ol{f}\cdot n\]
for some $f\in \L$ and some negligible  $n\in\L$. In particular
$\deltator_{L}$ divides $\deltator_{J}$.}\\

\begin{proof}
We write $L'=J$.
In light of Theorem \ref{mainthm} and the inequality $c(L,L')\leq g(L,L')$ it suffices to prove the second statement.
Let $L$ and $L'$ be two $m$-component links with $\b(L')=\b(L)+g(L,L')$.
We have to show that
\[ \deltator_L=\deltator_{L'}\cdot f\, \ol{f}\cdot n\]
for some $f\in \L$ and some negligible $n\in \L$.

We first consider the case that $g(L,L')=1$. We start out with the following claim.

\begin{claim}
There exists a non-zero $p\in \L$ such that
\[  \deltator_{L}=\deltator_{L'}\cdot p.\]
\end{claim}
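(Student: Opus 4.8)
The plan is to establish the Claim (the case $g(L,L')=1$) by comparing the two link exteriors through a common $3$-manifold, and then to deduce Theorem~\ref{lem:torsionpolynomials} from the Claim together with Theorem~\ref{mainthm} and an induction on $g(L,L')$. For the Claim, realise the single crossing change geometrically: there is an unknot $c\subset S^3\setminus \nu L$ bounding a disc that meets $L$ transversely in two points, such that $\pm 1$-surgery on $c$ carries $(S^3,L)$ to $(S^3,L')$. Set $N:=S^3\setminus \nu(L\cup c)$, the exterior of the $(m+1)$-component link $L\cup c$. Both $X_L$ and $X_{L'}$ are Dehn fillings of $N$ along the torus $\partial\nu c$: along the meridian $\mu_c$ of $c$ to recover $X_L$, and along $\mu_c\pm\lambda_c$ (with $\lambda_c$ the Seifert longitude of $c$) to recover $X_{L'}$. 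Write $\vec v=(\lk(c,L_1),\dots,\lk(c,L_m))\in\Z^m$; then $\lambda_c$ represents $\sum_i v_i\mu_i$ in $H_1(N;\Z)$, and in $H_1(X_L;\Z)=H_1(X_{L'};\Z)=\Z^m$ the core of the filling solid torus is sent to the monomial $t^{\vec v}=\prod_i t_i^{v_i}$.

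First I would compute the relative homology. By excision $H_*(X_L,N;\Lambda)\cong H_*(\nu c,\partial\nu c;\Lambda)$, and since the coefficient system is trivial on the meridian and sends the core circle to $t^{\vec v}$, the relative K\"unneth theorem applied to $(\nu c,\partial\nu c)\cong S^1\times(D^2,S^1)$ shows this group is concentrated in degree $2$, equal to $\Lambda/(t^{\vec v}-1)$ when $\vec v\neq 0$ and to $\Lambda$ when $\vec v=0$; in both cases it is cyclic, and the analogous statement holds for $H_*(X_{L'},N;\Lambda)$ with the surgery-dual core again sent to $t^{\vec v}$. Since $X_L$, $X_{L'}$ and $N$ all have the homotopy type of $2$-complexes, their homology vanishes above degree $2$ and the torsion of their second homology is zero; feeding this into the long exact sequences of the pairs $(X_L,N)$ and $(X_{L'},N)$ with $\Lambda$-coefficients then exhibits $H_1(X_L;\Lambda)$ as the quotient of $H_1(N;\Lambda_{\psi_L})$ by a cyclic submodule, and $H_1(X_{L'};\Lambda)$ as the quotient of $H_1(N;\Lambda_{\psi_{L'}})$ by a cyclic submodule, where $\psi_L,\psi_{L'}\colon\pi_1(N)\to\Z^m$ are the two induced coefficient systems. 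Using Lemma~\ref{lem:ordermultiplicative}, the Claim becomes a comparison of $\tor_\Lambda H_1(N;\Lambda_{\psi_L})$ and $\tor_\Lambda H_1(N;\Lambda_{\psi_{L'}})$, up to the cyclic correction terms, which are divisors of $t^{\vec v}-1$.

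The essential subtlety is that $\psi_L$ and $\psi_{L'}$ are genuinely different: they agree on the meridians $\mu_i$, but $\psi_L(\mu_c)=1$ whereas $\psi_{L'}(\mu_c)=t^{\mp\vec v}$. To compare the two homologies I would pass to the multivariable Alexander module $B:=H_1(N;\Lambda^+)$ of $L\cup c$, where $\Lambda^+=\Z[t_1^{\pm},\dots,t_m^{\pm},s^{\pm}]$ and $s$ records $\mu_c$; the systems $\psi_L$ and $\psi_{L'}$ are the specialisations $s\mapsto 1$ and $s\mapsto t^{\mp\vec v}$ of $\Lambda^+$, and a universal-coefficient argument identifies $H_1(N;\Lambda_{\psi_L})$ and $H_1(N;\Lambda_{\psi_{L'}})$ with $B\otimes_{\Lambda^+}\Lambda$ under these two specialisations, up to an extension by $\tor_1^{\Lambda^+}(H_0(N;\Lambda^+),\Lambda)$, which by Lemma~\ref{lem:h0} has negligible order. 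One is then comparing the order of the torsion of a single $\Lambda^+$-module under two one-variable specialisations of $s$, and this is precisely where the hypothesis $\beta(L')=\beta(L)+1$ is used: it forces the specialisation belonging to $L'$ to be strictly more degenerate (the rank of the module jumps by one), which channels all of the discrepancy between the two specialised orders into an honest polynomial factor --- the extra vanishing of the determinant of a square presentation matrix along $s=t^{\mp\vec v}$ --- so that $\deltator_{L'}$ divides $\deltator_L$ in $\Lambda$ and the required non-zero $p$ exists.

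I expect the hard part to be exactly this last step: pinning down how the order of the torsion submodule of $B$ changes under a one-variable specialisation, and in particular showing that the rank-jump hypothesis forces the discrepancy to be an element of $\Lambda$ rather than merely of $\Lambda_S$, and that the $t^{\vec v}-1$ correction factors do not obstruct divisibility on the nose. Granting the Claim, the case $g(L,L')=1$ follows by substituting $\deltator_L=\deltator_{L'}\cdot p$ into Corollary~\ref{maincor} (applicable since $c(L,L')\le g(L,L')=1=|\beta(L)-\beta(L')|$): cancelling the non-zero factor $\deltator_{L'}$ yields $p\cdot f\,\ol{f}=g\,\ol{g}\cdot n$, and unique factorisation in $\Lambda_S$ forces $p$ to be a norm times a negligible element, as required. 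For general $g=g(L,L')$ I would then induct along an optimal sequence $L=L^{(0)}\rightsquigarrow_1 L^{(1)}\rightsquigarrow_1\dots\rightsquigarrow_1 L^{(g)}=L'$: since $\beta$ changes by at most one under a crossing change and the total change is $+g$, it increases by exactly one at each step, so the $g=1$ case applies to each $L^{(i)}\rightsquigarrow_1 L^{(i+1)}$ and the resulting factorisations multiply.
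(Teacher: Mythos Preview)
Your geometric setup via the crossing circle $c$ and the common exterior $N=X_{L\cup c}$ is the right one, and your observation that the induced coefficient systems $\psi_L,\psi_{L'}\colon\pi_1(N)\to\Z^m$ genuinely differ (since $\psi_L(\mu_c)=0$ while $\psi_{L'}(\mu_c)=\mp\vec v$) is correct. But your proposed resolution---passing to the extra variable $s$ and comparing two specialisations of $H_1(N;\Lambda^+)$---is not carried out; you yourself flag the last step as the hard part, and indeed controlling how the order of the torsion submodule behaves under specialisation, and showing that the rank jump forces honest divisibility in $\Lambda$ rather than merely in $\Lambda_S$, is delicate. As written, this is a genuine gap.

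What you are missing is that the two coefficient systems are equivalent under a self-homeomorphism of $N$: a twist of $N$ supported in a collar of the boundary torus $\partial\nu c$ (tapering a Dehn twist in the $\lambda_c$-direction) gives $\Phi\colon N\to N$ with $\Phi_*(\mu_i)=\mu_i$ and $\Phi_*(\mu_c)=\mu_c\pm\lambda_c$, hence $\psi_{L'}\circ\Phi_*=\psi_L$. Thus $H_1(N;\Lambda_{\psi_L})\cong H_1(N;\Lambda_{\psi_{L'}})$ as $\Lambda$-modules, and one may call this common module $M$. Then both $H=H_1(X_L;\Lambda)$ and $H'=H_1(X_{L'};\Lambda)$ are quotients of $M$ by a single element, which is exactly the diagram the paper imports from \cite[Proposition~4.1]{CFP13}. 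From there the paper's argument is short and avoids specialisation entirely: the hypothesis $\rk H'=\rk H+1$ forces $\rk M=\rk H+1$, so in the exact sequence $\Lambda\to M\to H\to 0$ the left map is injective and hence $TM\hookrightarrow TH$; on the other side $M\twoheadrightarrow H'$ with $\rk M=\rk H'$, so $M\otimes\Omega\to H'\otimes\Omega$ is an isomorphism and a diagram chase yields $TM\twoheadrightarrow TH'$. Multiplicativity of orders (Lemma~\ref{lem:ordermultiplicative}) then gives $\ord(TH')\mid\ord(TM)\mid\ord(TH)$, which is the Claim.

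Your subsequent deduction of the norm form from the Claim together with Corollary~\ref{maincor}, and the induction along an optimal crossing-change sequence for general $g(L,L')$, are correct and match the paper.
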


We write $H=H_1(S^3\sm \nu L;\L)$,  $H'=H_1(S^3\sm \nu L';\L)$ and $\b=\b(L)$. By assumption we have $\rk_{\L}(H)=\b$ and $\rk_{\L}(H')=\b+1$.
In \cite[Proposition~4.1]{CFP13} we showed that there exists a diagram
\[\xymatrix @R-0.1cm { & \Lambda  \ar[d]_{f} &  & \\
\Lambda \ar[r]^{g} & M \ar[r]^{p'} \ar[d]_{p} & H' \ar[r] & 0   \\
 & H \ar[d]  & & \\
  & 0 & &}\]
where $M$ is some $\L$-module and where the horizontal and vertical sequences are exact. 
It follows from the horizontal exact sequence that $\rk_{\L}(M)\geq \b+1$. On the other hand from
considering the vertical exact sequence we see that $\rk_{\L}(M)\leq \b+1$.
Thus we deduce that $\rk_{\L}(M)=\b+1$. It then follows again from the vertical sequence that  $f$ is injective, which in turn  implies that
$ TM\to TH$ is a monomorphism. By Lemma \ref{lem:ordermultiplicative} we have that
\begin{equation} \label{equ:div1} \ord(TM) \,|\, \ord(TH).\end{equation}
 Consider the following commutative diagram
\[ \xymatrix{ 0\ar[r] & TM\ar[d]^{p'|}\ar[r]& M\ar[d]^{p'}\ar[r] & M\otimes_{\L}\Omega \ar[d]^{p' \otimes \id} \\
0\ar[r] & TH'\ar[r]& H'\ar[r] & H'\otimes_{\L}\Omega .}\]
The middle vertical map is an epimorphism and the right hand map is a monomorphism since $p'$ is a surjective homomorphism between two $\Omega$-vector spaces of the same dimension. Some mild diagram chasing shows that $p\colon TM\to TH'$ is an epimorphism.  Lemma~\ref{lem:ordermultiplicative}  then implies that
\begin{equation} \label{equ:div2}  \ord(TH') \,|\, \ord(TM).\end{equation}
The combination of (\ref{equ:div1}) and (\ref{equ:div2}) implies that
\[  \ord(TH') \,|\, \ord(TH).\]
 But this is exactly the desired statement. This concludes the proof of the claim.

\smallskip
We just showed that
$\deltator_{L}=\deltator_{L'}\cdot p$ for some non-zero $p\in \L$.
Moreover by Corollary \ref{maincor} we know  that
\[  \deltator_{L}\cdot g\,\ol{g}= \deltator_{L'}\cdot g'\,\ol{g'}\cdot  n\]
for some $g,g'\in \L$ and some negligible~$n$. If we combine these two statements we see that $g\,\ol{g}$ divides $g'\,\ol{g'}\cdot n$. Since $\L$ is a UFD we have that
$g'\,\ol{g'}\cdot n=g\,\ol{g}\cdot f\,\ol{f}\cdot m$ for some $f\in \L$ and some negligible~$m$. Simplifying, we obtain
$ \deltator_{L}= \deltator_{L'}\cdot f\,\ol{f}\cdot  m$.
This concludes the proof of the theorem in the case $g(L,L')=1$.

\smallskip
Now suppose that $g(L,L')=g>1$. Then there exists a sequence $L=L_0,L_1,\dots,L_g=L'$ of links such that each $L_i$ is obtained from the previous link by a single crossing change. By Theorem \ref{mainthm}  we have $|\b(L_{i+1})-\b(L_i)|\leq 1$ for each $i$.
It follows from the assumption  $\b(L')=\b(L)+g(L,L')$ that for each $i$ we have in fact  $\b(L_{i+1})=\b(L_i)+1$.
The desired statement follows easily  from applying the above result to the $g$ pairs of links.
\end{proof}

\subsection{Applications of Theorem \ref{lem:torsionpolynomials}}
In this section we will discuss applications of  Theorem \ref{lem:torsionpolynomials} to various special cases of determining the Gordian distance between links.  We start out with the following well-known lemma.

\begin{lemma} \label{lem:rankmminus1}
For an $m$-component link $L$ we have $\b(L)\leq m-1$.
\end{lemma}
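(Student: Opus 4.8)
The plan is to read $\b(L)$ off the cellular chain complex of the universal abelian cover of $X_L$, and then to bound it by comparing this complex with the ordinary (untwisted) chain complex obtained by setting $t_1=\dots=t_m=1$.

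First I would invoke the standard fact that a compact connected $3$-manifold with nonempty boundary is homotopy equivalent to a finite $2$-complex. Applied to $X_L$, whose boundary is a disjoint union of $m$ tori, this gives a finite connected $2$-complex $Y\simeq X_L$, say with $p$ cells in dimension $0$, $g$ in dimension $1$ and $h$ in dimension $2$. The cellular chain complex of the $\Z^m$-cover of $Y$ determined by $\phi_L$ is a complex of finitely generated free $\L$-modules
\[ \L^h\xrightarrow{\p_2}\L^g\xrightarrow{\p_1}\L^p,\]
whose first homology is $\ker\p_1/\operatorname{im}\p_2$, and (up to the module involution, which does not change ranks) this computes $H_1(X_L;\L)$. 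Hence
\[ \b(L)=\rk_\L(\ker\p_1)-\rk_\L(\p_2)=(g-\rk_\L(\p_1))-\rk_\L(\p_2).\]
Since $\phi_L$ is nontrivial and $\Omega$ is a field, Lemma~\ref{lem:h0} gives $H_0(X_L;\Omega)=\coker(\p_1\otimes_\L\Omega)=0$, so $\p_1\otimes_\L\Omega$ is surjective and $\rk_\L(\p_1)=p$; therefore $\b(L)=(g-p)-\rk_\L(\p_2)$.

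The heart of the argument is a lower bound on $\rk_\L(\p_2)$. Applying the augmentation $\L\to\Z$, $t_i\mapsto 1$, identifies the chain complex of the cover with the ordinary cellular chain complex of $Y$, which computes $H_*(X_L;\Z)$; in particular $\p_2\otimes_\L\Z$ becomes the ordinary cellular boundary $C_2(Y)\to C_1(Y)$. Tensoring further with $\Q$, connectedness of $X_L$ gives $\rk(\p_1\otimes_\L\Q)=p-1$, and the key input $H_1(X_L;\Z)\cong\Z^m$ — valid because $L\subset S^3$, e.g.\ by Alexander duality $H_1(X_L;\Z)$ is free on the meridians — yields
\[ \rk(\p_2\otimes_\L\Q)=\dim_\Q\ker(\p_1\otimes_\L\Q)-\dim_\Q H_1(X_L;\Q)=(g-p+1)-m.\]
Because the rank of a matrix over $\L$ cannot drop under a ring homomorphism (a nonzero minor of the specialisation lifts to a nonzero minor over $\L$), we get $\rk_\L(\p_2)\ge\rk(\p_2\otimes_\L\Q)=g-p+1-m$. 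Combining with $\b(L)=(g-p)-\rk_\L(\p_2)$ gives $\b(L)\le(g-p)-(g-p+1-m)=m-1$, as claimed.

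The one thing to be careful about is the bookkeeping in the middle step: identifying $\p_2\otimes_\L\Z$ with the untwisted cellular boundary map and keeping the two rank counts — over $\L$ via $\Omega$, and over $\Q$ after setting $t_i=1$ — consistent. I would also emphasise that the hypothesis $L\subset S^3$ is genuinely needed, and enters only through $H_1(X_L;\Z)\cong\Z^m$: for a general compact $3$-manifold with $m$ toroidal boundary components the bound fails (for instance $\Sigma\times S^1$, with $\Sigma$ a surface of positive genus and $m$ boundary circles, has first $\L$-homology of rank exceeding $m-1$).
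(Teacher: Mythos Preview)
Your proof is correct and takes a genuinely different route from the paper's. The paper embeds a wedge of $m$ meridional circles $Y=\bigvee_m S^1\hookrightarrow X_L$, observes that $H_i(X_L,Y;\Z)=0$ for $i\le 1$, and invokes \cite[Proposition~2.10]{COT03} to conclude $H_1(X_L,Y;\Omega)=0$; hence $H_1(Y;\Omega)$ surjects onto $H_1(X_L;\Omega)$, and an Euler-characteristic count on the wedge gives $\dim_\Omega H_1(Y;\Omega)=m-1$. You instead compare the ranks of the cellular boundary $\partial_2$ over $\L$ and over $\Q$ directly, using only that the rank of a matrix cannot increase under a ring specialisation. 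Both arguments rest on the same input $H_1(X_L;\Z)\cong\Z^m$ and on a ``rank does not go up under specialisation'' principle --- indeed the cited COT03 lemma is a packaged, chain-level version of exactly this. Your route is more elementary and self-contained; the paper's is more geometric, making the source of the bound (the wedge of meridians) explicit. One phrasing nit: ``the rank of a matrix over $\L$ cannot drop under a ring homomorphism'' reads backwards; you mean it cannot \emph{increase}, as your parenthetical correctly justifies.
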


\begin{proof}
The statement of the lemma is well-known to the experts, we will therefore just provide a sketch of an argument.
Let $L$ be an $m$-component link.
  Consider the inclusion of a wedge of $m$ circles $Y:=\bigvee_m S^1 \to X_L$ which sends each circle to a meridian of a different component of $L$.  The induced map on zeroth and first homology is an isomorphism.  In particular $H_i(X_L,\bigvee_m S^1;\Z) =0$ for $i=0,1$. It follows from \cite[Proposition~2.10]{COT03} that $H_1(X_L,Y;\O)=0$, which in turn implies that $H_1(Y;\Omega)\to H_1(X_L;\Omega)$ is surjective. Thus it suffices to show that $H_1(Y;\Omega) \cong \Omega^{m-1}$.  Note that by Lemma \ref{lem:h0} we have $H_0(Y;\Omega)=0$, therefore an Euler characteristic argument shows that indeed $H_1(Y;\Omega) \cong \Omega^{m-1}$.
\end{proof}

The following corollary to Theorem \ref{lem:torsionpolynomials} says in particular that the gap between the rank $\b(L)$ of the Alexander module  and the maximal possible rank $m-1$ gives a lower bound on the unknotting number.
Note that this particular corollary is in fact a special case of \cite[Theorem~1.1]{Ka13}.

\begin{corollary}\label{cor:unlink}
Let $L$ be an $m$-component link. Then the following hold:
\bn
\item We have $m-1-\b(L)\leq u(L)$.
 In particular if $\Delta_L\ne 0$, then $u(L)\geq m-1$.

\item If $\Delta_L\ne 0$ and $u(L)=m-1$, then
\[ \Delta_L= p\, \ol{p}\cdot n\]
for some $p\in \L$ and some negligible $n$.
\en
\end{corollary}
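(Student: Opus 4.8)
The plan is to prove both parts by comparing $L$ with the $m$-component unlink, which I will write as $U_m$, and invoking Theorem~\ref{lem:torsionpolynomials}. The two facts I need about $U_m$ come from Lemma~\ref{lem:alexsplit}: since every component of $U_m$ is an unknot we have $\b(U_m)=m-1$ and $\deltator_{U_m}\doteq 1$. I also use $\b(L)\le m-1$ from Lemma~\ref{lem:rankmminus1}. Finally, by definition of the unlinking number, $u(L)=g(L,U_m)$.

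For part (1) I would apply the inequality $|\b(L)-\b(J)|\le g(L,J)$ of Theorem~\ref{lem:torsionpolynomials} with $J=U_m$. Because $\b(L)\le m-1=\b(U_m)$, this reads $m-1-\b(L)=|\b(L)-\b(U_m)|\le g(L,U_m)=u(L)$, which is the first inequality. If in addition $\Delta_L\ne 0$, then $\b(L)=0$ (recall $\Delta_L=0$ if and only if $\b(L)>0$), so the inequality specialises to $u(L)\ge m-1$.

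For part (2) assume $\Delta_L\ne 0$ and $u(L)=m-1$. Then $\b(L)=0$, so $H_1(X_L;\L)$ is entirely $\L$-torsion and hence $\deltator_L\doteq\Delta_L$. Moreover $\b(U_m)=m-1=\b(L)+u(L)=\b(L)+g(L,U_m)$, so the hypothesis of the second part of Theorem~\ref{lem:torsionpolynomials}, applied with $J=U_m$, is met. It gives $\deltator_L=\deltator_{U_m}\cdot f\,\ol f\cdot n=f\,\ol f\cdot n$ for some $f\in\L$ and some negligible $n$. Combining with $\deltator_L\doteq\Delta_L$, and absorbing the resulting unit (which is negligible, and a product of negligibles is negligible) into $n$, we obtain $\Delta_L=p\,\ol p\cdot n$ with $p:=f$.

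This argument is essentially a direct specialisation of Theorem~\ref{lem:torsionpolynomials}, so I do not expect a genuine obstacle. The only points that need attention are bookkeeping ones: in the divisibility conclusion of Theorem~\ref{lem:torsionpolynomials} one must make sure $L$ plays the role with the smaller value of $\b$ (here automatic, since $\b(U_m)=m-1$ is maximal by Lemma~\ref{lem:rankmminus1}), and one must use the identification $\deltator_L\doteq\Delta_L$, valid precisely when $\b(L)=0$, in order to pass from the torsion Alexander polynomial to the Alexander polynomial in the final statement.
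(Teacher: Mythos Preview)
Your proof is correct and follows essentially the same approach as the paper: compare $L$ with the $m$-component unlink, use Lemma~\ref{lem:alexsplit} to compute $\b$ and $\deltator$ for the unlink, invoke Lemma~\ref{lem:rankmminus1}, and apply Theorem~\ref{lem:torsionpolynomials}. The only cosmetic difference is that the paper cites Theorem~\ref{mainthm} for the rank inequality in part~(1), whereas you cite the equivalent inequality stated in Theorem~\ref{lem:torsionpolynomials}; both are fine.
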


\begin{proof}
We denote by $J$ the unlink with $m$-components. It follows from Lemma \ref{lem:alexsplit} that $\b(J)=m-1$ and
$\tordelta_J\doteq 1$.
The first  statement of the corollary follows immediately from the first statement of Theorem \ref{mainthm} together with Lemma \ref{lem:rankmminus1}.

Now suppose that $u(L)=m-1$ and $\Delta_L\ne 0$.
In this case $\b(L)=0$ and $\deltator_L=\Delta_L$. It thus follows that $|\b(L)-\b(J)|=m-1=u(L)=g(L,J)$.
The desired statement follows immediately from Theorem \ref{lem:torsionpolynomials} and
$\tordelta_J\doteq 1$.
\end{proof}

We also have the following corollary which significantly strengthens~\cite[Theorem~4.2]{CFP13}.

\begin{corollary}\label{cor:split}
Let $L$ be an $m$-component link. Then the following hold:
\bn
\item \label{item:corsplit-one} We have $m-1-\b(L)\leq \sp(L)$. In particular if $\Delta_L\ne 0$, then $\sp(L)\geq m-1$.
\item \label{item:corsplit-two} If $\Delta_L\ne 0$ and $\sp(L)=m-1$,   then
\[ \Delta_L=  \prod_{i=1}^m \Delta_{L_i}(t_i)\,\,\,\cdot \,\,\, p\, \ol{p}\cdot n\]
for some $p\in \L$ and some negligible $n$.
\en
\end{corollary}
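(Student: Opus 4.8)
The plan is to run the argument of Corollary~\ref{cor:unlink} with the split link $J:=L_1\sqcup\dots\sqcup L_m$ in place of the unlink. The one point that needs justification at the outset is that a crossing change between strands belonging to two \emph{different} components of $L$ does not alter the isotopy type of any single component: the projection of each individual $L_i$ is untouched by such a move. Hence, if $L$ is turned into a split link by $\sp(L)$ crossing changes between different components, the resulting split link is exactly $J=L_1\sqcup\dots\sqcup L_m$, and this sequence of crossing changes witnesses $g(L,J)\leq \sp(L)$. By Lemma~\ref{lem:alexsplit} we have $\b(J)=m-1$ and $\deltator_J\doteq \prod_{i=1}^m\Delta_{L_i}(t_i)$.

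For part~(\ref{item:corsplit-one}), I would simply combine the inequality $|\b(L)-\b(J)|\leq g(L,J)\leq \sp(L)$ from Theorem~\ref{lem:torsionpolynomials} with $\b(J)=m-1$ and the bound $\b(L)\leq m-1$ of Lemma~\ref{lem:rankmminus1}, giving $m-1-\b(L)\leq \sp(L)$. If $\Delta_L\neq 0$ then $\b(L)=0$, and the ``in particular'' clause follows.

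For part~(\ref{item:corsplit-two}), assume $\Delta_L\neq 0$ and $\sp(L)=m-1$. Then $\b(L)=0$, so $\deltator_L=\Delta_L$, and the inequalities above are forced to be equalities, i.e.\ $g(L,J)=m-1=\b(J)-\b(L)$. Applying the second part of Theorem~\ref{lem:torsionpolynomials} to the pair $(L,J)$ yields $\deltator_L=\deltator_J\cdot f\,\ol{f}\cdot n$ for some $f\in\L$ and some negligible $n$. Substituting $\deltator_L=\Delta_L$ and $\deltator_J\doteq\prod_{i=1}^m\Delta_{L_i}(t_i)$, and absorbing the monomial unit of $\L$ relating $\deltator_J$ to $\prod_i\Delta_{L_i}(t_i)$ (itself a negligible element) into $n$, gives $\Delta_L=\prod_{i=1}^m\Delta_{L_i}(t_i)\cdot p\,\ol{p}\cdot n$ with $p:=f$, as claimed.

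The only genuinely non-formal step is the initial observation that the split link produced by splitting-number crossing changes has components exactly $L_1,\dots,L_m$; once that is in hand, the corollary is a direct translation of Corollary~\ref{cor:unlink} via Lemma~\ref{lem:alexsplit}, and the remaining bookkeeping (the $\doteq$ versus $=$ discrepancy, absorbing units into $n$) is routine.
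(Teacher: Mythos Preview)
Your proof is correct and follows precisely the approach the paper indicates: deduce the corollary from Theorem~\ref{lem:torsionpolynomials} just as Corollary~\ref{cor:unlink} was, replacing the unlink by the split link $J=L_1\sqcup\dots\sqcup L_m$ and invoking Lemma~\ref{lem:alexsplit} for its rank and torsion polynomial. Your explicit observation that crossing changes between distinct components preserve each $L_i$, together with the bookkeeping that forces $g(L,J)=m-1$ in part~(\ref{item:corsplit-two}), fills in exactly the details the paper leaves to the reader.
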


The corollary is deduced from Theorem~\ref{lem:torsionpolynomials} in almost the same way as Corollary  \ref{cor:unlink}, except that we now apply Lemma \ref{lem:alexsplit} to the split link whose components are precisely the components of $L$ when they are considered as individual knots.

Finally the following corollary is also proved in the same way as Corollary~\ref{cor:unlink}, except here the knot types occurring in some putative split link, obtained by $m-1$ crossing changes on $L$, are unknown. We leave the details to the reader.

\begin{corollary}\label{cor:wsp}
Let $L$ be an $m$-component link. Then the following hold:
\bn
\item We have $m-1-\b(L)\leq \wsp(L)$. In particular if $\Delta_L\ne 0$, then $\wsp(L)\geq m-1$.
\item If $\Delta_L\ne 0$ and $\wsp(L)=m-1$,   then
\[ \Delta_L= \prod_{i=1}^m p_i(t_i)\,\,\,\cdot \,\,\, p\,\ol{p}\cdot n\]
for some $p_i(t_i)\in \Z[t_i^{\pm 1}]$, $i=1,\dots,m$, some $p \in \L$ and some negligible $n$.
\en
\end{corollary}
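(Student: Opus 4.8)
The statement to prove is Corollary~\ref{cor:wsp}, and the natural plan is to mimic the proof of Corollary~\ref{cor:unlink}, substituting the unlink with a split link whose components are \emph{unknown} knots $K_1,\dots,K_m$.

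\textbf{Outline of the argument.} Let $L$ be an $m$-component link with $\wsp(L)=m-1-\b(L)+r$ would be too optimistic; instead, start from the definition. By definition of the weak splitting number, there is a sequence of $\wsp(L)$ crossing changes turning $L$ into some split link $J = K_1 \sqcup \dots \sqcup K_m$, where the $K_i$ are knots (possibly non-trivial). By Lemma~\ref{lem:blanchfield-split} (or just Lemma~\ref{lem:alexsplit}) we have $\b(J) = m-1$ and $\deltator_J \doteq \prod_{i=1}^m \Delta_{K_i}(t_i)$. Since $g(L,J) \le \wsp(L)$, the first part of Theorem~\ref{lem:torsionpolynomials} combined with Lemma~\ref{lem:rankmminus1} gives
\[ m-1-\b(L) = \b(J)-\b(L) \le |\b(J)-\b(L)| \le g(L,J) \le \wsp(L), \]
which is statement (1). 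For the ``in particular'' part, note $\Delta_L \ne 0$ forces $\b(L)=0$, hence $\wsp(L) \ge m-1$.

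\textbf{The equality case.} Now suppose $\Delta_L \ne 0$ and $\wsp(L)=m-1$. Then $\b(L)=0$, so $\deltator_L = \Delta_L$, and the chain of inequalities above collapses: $g(L,J) = \wsp(L) = m-1 = \b(J)-\b(L)$, so the hypothesis $\b(J)=\b(L)+g(L,J)$ of Theorem~\ref{lem:torsionpolynomials} holds. Therefore
\[ \Delta_L = \deltator_L = \deltator_J \cdot f\,\ol{f}\cdot n = \Big(\prod_{i=1}^m \Delta_{K_i}(t_i)\Big)\cdot f\,\ol{f}\cdot n \]
for some $f \in \L$ and some negligible $n$. Setting $p_i(t_i) := \Delta_{K_i}(t_i) \in \Z[t_i^{\pm1}]$ and $p := f$ gives exactly the claimed formula. (One should also absorb any unit ambiguity $\doteq$ coming from Lemma~\ref{lem:blanchfield-split} into the negligible factor $n$, since units of $\L$ are monomials, hence negligible.)

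\textbf{Main obstacle.} There is essentially no serious obstacle here; the content is entirely in Theorem~\ref{lem:torsionpolynomials} and the split-link computation, both already available. The only point requiring a moment's care is that the split link $J$ realising the weak splitting number is not canonical — its knot components $K_i$ depend on the chosen sequence of crossing changes — so the $p_i(t_i)$ in the conclusion are existentially quantified (``for some $p_i$''), matching the statement; one cannot pin them down further without more information about $L$. This is precisely the sense in which this corollary is weaker than Corollary~\ref{cor:split}, where the components of the target split link are prescribed to be the components of $L$ viewed as individual knots. Given that the authors already say ``proved in the same way as Corollary~\ref{cor:unlink}... We leave the details to the reader,'' the expected write-up is just the two displayed computations above.
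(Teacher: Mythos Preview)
Your proposal is correct and follows exactly the approach the paper indicates: apply Theorem~\ref{lem:torsionpolynomials} with $J$ taken to be some split link reached by $\wsp(L)$ crossing changes, use Lemma~\ref{lem:alexsplit} to compute $\b(J)=m-1$ and $\deltator_J$, and absorb the $\doteq$ ambiguity into the negligible factor. The paper itself omits the details and simply says it is ``proved in the same way as Corollary~\ref{cor:unlink}, except here the knot types occurring in some putative split link \dots\ are unknown,'' which is precisely what you wrote out.
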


The reader may compare Corollary~\ref{cor:wsp} to \cite[Corollary~4.1]{Ka13}.

\section{Examples of unlinking and splitting number computations}\label{section:examples-unlinking-splitting}

\subsection{Unlinking numbers}\label{section:application-unlinking-numbers}
Kohn \cite{Koh93} considered the unlinking numbers of 2-component links with 9 or fewer crossings.  For most 3-component links with 9 or fewer crossings, the deduction of the unlinking number follows easily from elementary considerations of linking numbers, unknotting numbers of components, and certain sublinks being nontrivial.  In this section we show that Alexander modules enable a quick calculation of the unlinking numbers of the remaining five 3-component links with 9 or fewer crossings.  These five links are $L6a4$, $L8a16$, $L9a46$, $L9a53$ and $L9a54$.  We remark that the conclusions of this subsection already follow from~\cite{Ka13}, so we will be brief.

\begin{itemize}
\item The 3-component link $L8a16$ has unknotted components and Alexander polynomial
\[ (t_1-1)(t_2-1)(t_3-1)(t_2t_3-1).\]
Since $t_2t_3-1$ is not a norm it follows from Corollary \ref{cor:unlink} that the unlinking number is at least three.  In fact the unlinking number is equal to three.

\item We now consider the 3-component link $L9a54$, which has unknotted components. Its Alexander polynomial is
\[ (t_3 - 1)(t_2 - 1)(t_1 - 1)(t_3^2  - t_3 + 1).\]
Again, since $t_3^2  - t_3 + 1$ is not a norm it follows from Corollary \ref{cor:unlink} that the unlinking number is at least three.  In fact the unlinking number is equal to three.

\item The 3-component links $L6a4$, $L9a46$ and $L9a53$ have nonzero Alexander polynomial, hence unlinking numbers at least two
by Corollary \ref{cor:unlink}.  In fact the unlinking numbers of these links are equal to two.

\item We also briefly consider one 2-component link, the link $L9a1$. It has two unknotted components, and its Alexander polynomial is
\[ (t_2 - 1)(t_1 - 1)(2t_2^2  - 3t_2 + 2).\]
So it follows from Corollary \ref{cor:unlink} that the unlinking number is at least two.
In fact the unlinking number is equal to two.  This was already shown by Kohn~\cite{Koh93} using other methods.
\end{itemize}

\subsection{Band-claspings of split links}\label{section:hopfbandlinking}

Let $K\sqcup J$ be a 2-component split link. Pick an embedding $f\co D=D^2\to S^3$ such that $f(D)\cap K=f(\partial D)\cap K$ is an interval and such that $f(D)$ intersects $J$ transversally in one point in the interior of $f(D)$. Then we write
\[ K'=K\sm f(\partial D) \cup \overline{f(\partial D)\sm K} \]
and we refer to $K'\cup J$ as a \emph{band-clasping of $K$ and $J$}.  See Figure~\ref{fig:band-clasping}.

\begin{figure}[h]
 \begin{center}
  \labellist\small
    \pinlabel{$J$} at 235 198
    \pinlabel{$K$} at 7 198
    \pinlabel{$D$} at 100 160
   \pinlabel{$J$} at 565 198
    \pinlabel{$K'$} at 344 198
    \endlabellist
\includegraphics[scale=0.5]{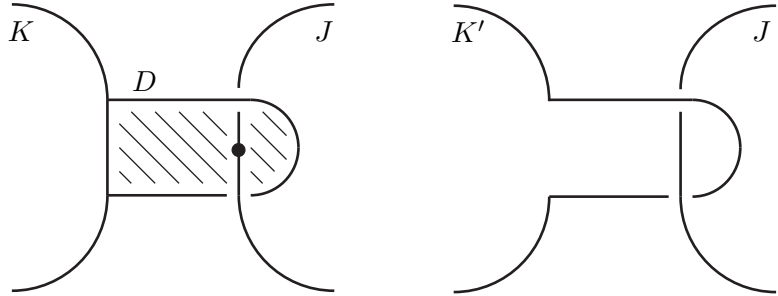}
\caption{Band-clasping.}
\label{fig:band-clasping}
\end{center}
\end{figure}

In Figure \ref{fig:split1} we show a band-clasping of two trefoils.
If we can find a projection onto a plane such that the projections of $K$ and $f(D)$ intersect only in the projection of $K\cap f(D)$, then we say that  $K'\cup J$ is the \emph{trivial band-clasping of $K$ and $J$}. It is straightforward to see that in that case the resulting link does not depend on the choice of~$f$.

\begin{figure}[h]
 \begin{center}
  \labellist\small
    \pinlabel{$J$} at 185 67
    \pinlabel{$K$} at 120 83
    \endlabellist
\includegraphics[scale=1]{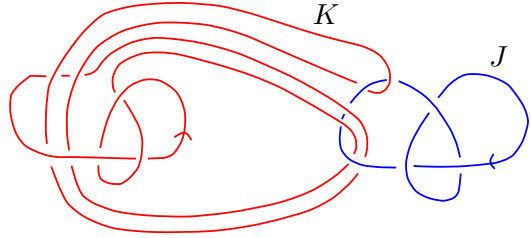}
\caption{A band-clasping of two trefoils.}
\label{fig:split1}
\end{center}
\end{figure}

We have the following observation about Alexander polynomials of band-claspings.

\begin{proposition}\label{lem:alexhopflinkings}
Let $L$ be a band-clasping of $K$ and $J$, then
\[ \Delta_L(s,t)=\Delta_K(t)\cdot \Delta_J(s)\cdot g\,\ol{g}\]
for some non-zero $g\in \L$. Furthermore $g=1$ if the band-clasping is trivial.
\end{proposition}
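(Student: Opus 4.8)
The plan is to realise $L$ as one crossing change away from the split link $Y:=K\sqcup J$, to invoke Theorem~\ref{lem:torsionpolynomials}, and then to remove the resulting ambiguity using the Torres condition. I would begin with two elementary observations about the band-clasping. Since $\op{int}(f(D))$ is disjoint from $K$, the arc $\ol{f(\partial D)\sm K}$ is isotopic rel endpoints, through the disc $f(D)$, to the arc $f(\partial D)\cap K$; hence $K'$ is isotopic to $K$ as a knot in $S^3$, so $\Delta_{K'}\doteq\Delta_K$. Moreover $f(D)$ meets $J$ transversally in a single point, so $f(\partial D)$ has linking number $\pm 1$ with $J$, whence $\ell:=\lk(K',J)=\lk(K,J)\pm 1=\pm 1$. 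Sliding the arc $\ol{f(\partial D)\sm K}$ back across $f(D)$ onto $f(\partial D)\cap K$ moves $K'$ through $J$ exactly once, and realising this move as a single crossing change shows that $L$ becomes $Y$ after one crossing change; thus $g(L,Y)\le 1$, and since $\ell\ne 0=\lk(Y_1,Y_2)$ we have $L\ne Y$ and so $g(L,Y)=1$.

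Next I would feed this into the results of the paper. Write $t$ for the meridian variable of $K'$ and $s$ for that of $J$. By the Torres condition for $2$-component links, setting $s=1$ gives $\Delta_L|_{s=1}\doteq\frac{t^{\ell}-1}{t-1}\,\Delta_{K'}(t)\doteq\Delta_K(t)$ (using $\ell=\pm 1$), and symmetrically $\Delta_L|_{t=1}\doteq\Delta_J(s)$. In particular $\Delta_L\ne 0$, so $\b(L)=0$ and $\deltator_L\doteq\Delta_L$. By Lemma~\ref{lem:alexsplit} we have $\b(Y)=1$ and $\deltator_Y\doteq\Delta_K(t)\Delta_J(s)$, so $\b(Y)=\b(L)+g(L,Y)$ and Theorem~\ref{lem:torsionpolynomials}, applied to the pair $(L,Y)$, yields
\[\Delta_L\doteq\Delta_K(t)\,\Delta_J(s)\,f\,\ol f\,n\]
for some $f\in\L$ and some negligible $n$. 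To see that $n$ is a unit, write $n\doteq(1-t)^a(1-s)^b$ with $a,b\geq 0$: setting $s=1$ and comparing with $\Delta_L|_{s=1}\doteq\Delta_K(t)\ne 0$ forces $b=0$, and setting $t=1$ forces $a=0$. Hence $\Delta_L\doteq\Delta_K(t)\,\Delta_J(s)\,f\,\ol f$; since each of the three Alexander polynomials is $\doteq$ to its own conjugate, we may choose representatives so that $\Delta_L=\Delta_K(t)\,\Delta_J(s)\,g\,\ol g$ with $g:=f$, which is the first assertion.

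For the last assertion I would argue that a trivial band-clasping of $K$ and $J$ is exactly the link obtained from a Hopf link by connect-summing $K$ into one component and $J$ into the other, inside balls disjoint from the clasping region; the hypothesis that $K$ and $f(D)$ have a common projection meeting only along $K\cap f(D)$ is what makes these two summands separable. A direct Mayer--Vietoris computation of $H_1(X_L;\L)$ for this decomposition (equivalently, the connect-sum formula for the multivariable Alexander polynomial) then gives $\Delta_L\doteq\Delta_K(t)\,\Delta_J(s)\,\Delta_{\mathrm{Hopf}}(t,s)\doteq\Delta_K(t)\,\Delta_J(s)$, since $\Delta_{\mathrm{Hopf}}\doteq 1$; thus $g=1$.

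The step I expect to be the main obstacle is this last one: making precise what the triviality hypothesis buys---namely that the clasping region is separated by an embedded sphere from two balls carrying $K$ and $J$ respectively---and then organising the Mayer--Vietoris argument to read off $g=1$. By comparison, the appeals to Theorem~\ref{lem:torsionpolynomials} and to the Torres condition are routine; the only other points requiring a little care are the two geometric observations ($K'\simeq K$ and $\ell=\pm 1$) and the claim that a single crossing change suffices to recover $Y$, which should be verified for an arbitrary band-clasping.
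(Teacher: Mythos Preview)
Your argument is correct and complete, but it follows a genuinely different route from the paper's own proof. The paper observes that any band-clasping is ribbon concordant to the \emph{trivial} band-clasping (by adapting the argument of \cite{Mi98}), computes the Alexander polynomial of the latter by Mayer--Vietoris, and then invokes the behaviour of Alexander polynomials under ribbon concordance \cite{Ka78} to obtain the norm factor $g\,\ol g$. You instead realise $L$ as one crossing change from $K\sqcup J$, apply Theorem~\ref{lem:torsionpolynomials}, and then use the Torres condition (together with $\lk(K',J)=\pm 1$) to kill the negligible factor. The paper in fact remarks, just before its proof, that the result ``is a consequence of Corollary~\ref{cor:split}''; your write-up makes that alternative precise, and the Torres step is exactly what is needed to upgrade Corollary~\ref{cor:split} to the sharper statement with no negligible term. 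Your approach has the advantage of being entirely internal to the paper (modulo the classical Torres formula) and of not requiring the ribbon-concordance machinery; the paper's approach has the advantage of explaining \emph{geometrically} why the norm factor appears, and would adapt more readily to related situations where no single crossing change is available. Your treatment of the trivial case (identifying it with a Hopf link with $K$ and $J$ summed onto the two components) coincides with the paper's Mayer--Vietoris computation.
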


For example, using Kodama's program \emph{knotGTK} we can show that for the link $L$ in Figure \ref{fig:split1} we have
\[ \Delta_L(s,t)=(1-s+s^2)(1-t+t^2)(s^{-1}-1+t)(s-1+t^{-1}).\]
Note that  band-claspings have splitting number $1$. The lemma is thus a consequence of Corollary \ref{cor:split}, but we prefer to give a sketch of a proof which is particular to this class of links.

\begin{proof}[Sketched proof of Proposition~\ref{lem:alexhopflinkings}]
First of all, it is well-known, and can be shown using a Mayer--Vietoris argument, that the Alexander polynomial of the trivial band-clasping of $K$ and $J$ equals $\Delta_K(t)\cdot \Delta_J(s)$. Furthermore the proof of \cite[Theorem~1.1]{Mi98} carries over to show that  any  band-clasping $L$ of $K$ and $J$ is in fact ribbon concordant
 to the trivial band-clasping of $K$ and $J$. (We refer to \cite{Tr69} or alternatively \cite[p.~189]{Sav02} for the definition of ribbon concordance.) It then follows from standard arguments, e.g.\ by a variation on~\cite[Theorem~B]{Ka78}, that
\[ \Delta_L(s,t)=\Delta_K(t)\cdot \Delta_J(s)\cdot g\,\ol{g}\]
for some non-zero $g\in \L$.
\end{proof}

It can be shown by an argument completely analogous to that of~\cite[Theorem~1]{Kon79}, that any 2-component link with  splitting number $1$ is a  band-clasping of its components.
Moreover it seems likely, but we will not provide a proof, that  in Proposition \ref{lem:alexhopflinkings}  any non-zero $g$ can be realized by a band-clasping.
If this is correct, then this will  in particular show, except for determining the negligible factor precisely, that the conclusion of Corollary \ref{cor:split} (\ref{item:corsplit-two}) is optimal.

\subsection{Splitting numbers}\label{sec:splitnumbers}
In an earlier paper~\cite{CFP13}, two of us together with Jae Choon Cha already discussed splitting numbers in detail. In this section we will revisit some of the results from that paper.

First we remind the reader that in the calculation of the splitting number, one only allows crossing changes between different components. It is straightforward to show (see \cite[Lemma~2.1]{CFP13}) that the splitting number has the same parity as the sum of all linking numbers $\lk(L_i,L_j)$ with $i> j$. For example, if $L$ is a 2-component link with odd linking number, then the splitting number is also necessarily odd.

In \cite{CFP13} Alexander polynomial techniques were used to derive splitting number conclusions for 2-component linking number one links with at least one knotted component.  When both components were unknotted, covering link calculus was used, in which one studies the preimage of one component of the link in the covering space branched along the other component; see \cite{CK08,Cha09,CO93} for more on covering link calculus.
Some, but not all, of the conclusions obtained in \cite{CFP13} using covering links can be drawn using Corollary~\ref{cor:split}.
For example, in~\cite{CFP13} we investigated the link $L12n1320$, shown in Figure~\ref{fig:L12n1320}.

\begin{figure}
\begin{center}
\includegraphics[scale=0.4]{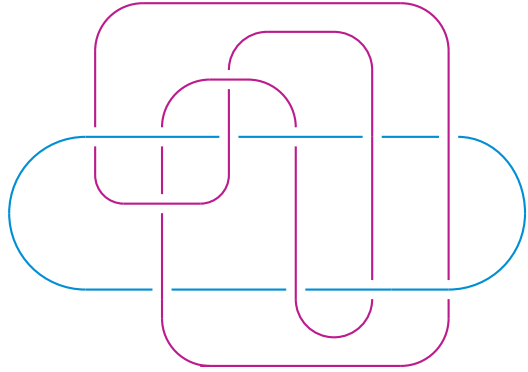}
\caption{The link $L12n1320$.}\label{fig:L12n1320}
\end{center}
\end{figure}

This is a 2-component link with linking number one and unknotted components.  It was shown in \cite[Section~5.2]{CFP13} that the splitting number is 3.  According to \emph{knotGTK}~\cite{Kod}, the Alexander polynomial is:
\[t_1^3t_2^3-2t_1^2t_2^3-t_1^3t_2^2+t_1t_2^3+5t_1^2t_2^2-4t_1t_2^2-4t_1^2t_2+5t_1t_2+t_1^2-t_2-2t_1+1,\]
which factors as
\[(t_1-1)(t_2-1)(t_1^2t_2^2-t_1t_2^2+3t_1t_2-t_1+1).\]
Since the last factor is not a norm, Corollary~\ref{cor:split} says that the splitting number is greater than 1.  In fact by the observation above, the splitting number of $L12n1320$ has to be odd, so it has to be at least 3. In fact  it is easy to verify that it is precisely~3.  The proof of this fact in \cite[Section~5.2]{CFP13} used twisted Alexander polynomials to show that a covering link is not slice, while \cite{BS13} used a Khovanov homology spectral sequence.

Similarly, the links  $L8a16$ and $L9a46$  were shown in \cite{CFP13} to have splitting number 3 using covering links.  They are 3-component links with nonzero Alexander polynomial, hence we also get from Corollary~\ref{cor:split} that the splitting number is at least $3$.

Note that for 2- and 3-component links this approach can only show that the splitting number is at least 3,
whereas the covering link techniques were sometimes sufficient to show that the splitting number is~5.

\subsection{Weak splitting numbers}

The 3-component link $L8a16$, shown in Figure~\ref{fig:l8a16}, has unknotted components and Alexander polynomial
\[ (t_1-1)(t_2-1)(t_3-1)(t_2t_3-1).\]

\begin{figure}
\begin{center}
\includegraphics[scale=0.5]{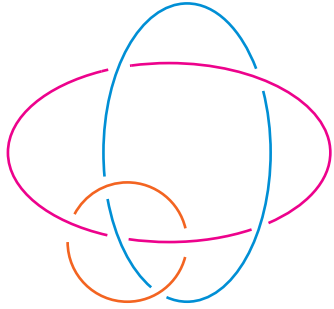}
\caption{The link $L8a16$.}\label{fig:l8a16}
\end{center}
\end{figure}

As well as having unlinking number~3 we see that $L8a16$ also has weak splitting number~3, by Corollary~\ref{cor:wsp}.
Similarly, we can also apply Corollary~\ref{cor:wsp} to prove that the link $L12n1320$ considered in Section~\ref{sec:splitnumbers} does not have weak splitting number~1.

\section{Knot types obtained from weak splitting operations}\label{section:knot-types-weak-splitting}

Recall the following notation from the introduction.
If a link $J$ can be obtained from a link $L$ by a sequence of $r$ crossing changes then we write $L \rightsquigarrow_r J$.
A sequence of crossing changes culminating in a split link is referred to as a \emph{splitting sequence}. Given knots $K_1,\dots,K_m$ we denote the split link whose components are these knots by $K_1 \sqcup \dots \sqcup K_m$.  Also we write $U$ for the unknot.

Given an $m$-component link $L$ with weak splitting number $\wsp(L)=r$, we investigate the question of which knot types can arise in a splitting sequence of length $r$.
 Theorem~\ref{thm:weak-split-no-blanchfield-r-plus-one-component} below concerns the case $r=m-1$.

\begin{theorem}\label{thm:weak-split-no-blanchfield-r-plus-one-component}
  Let $L$ be an $m$ component link with $\Delta_L \neq 0$ and $\wsp(L) = m-1$.
  Then for any two splitting sequences  $L \rightsquigarrow_{m-1} K_1 \sqcup \dots \sqcup K_{m}$ and  $L \rightsquigarrow_{m-1} J_1\sqcup \dots \sqcup J_{m}$ we  have
  \[\bigoplus_{i=1}^{m}\Bl_{K_i}(t_i) \sim \bigoplus_{i=1}^{m}\Bl_{J_i}(t_i),\]
where $\sim$ indicates equivalence in the Witt group of linking forms. In particular
\[\prod_{i=1}^{m} \Delta_{K_i}(t_i) \cdot  f \, \ol{f} = \prod_{i=1}^{m} \Delta_{J_i}(t_i) \cdot g \, \ol{g} \cdot n\]
for some non-zero polynomials $f,g$ and some negligible  $n\in \L$.
\end{theorem}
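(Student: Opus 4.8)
The plan is to apply Theorem~\ref{lem:torsionpolynomials} twice, using the two given splitting sequences as paths between $L$ and two different split links, and then combine the resulting divisibility statements. Concretely, write $S_K := K_1 \sqcup \dots \sqcup K_m$ and $S_J := J_1 \sqcup \dots \sqcup J_m$. Since $\wsp(L) = m-1$, there is a sequence of $m-1$ crossing changes from $L$ to $S_K$, so $g(L, S_K) \leq m-1$. By Lemma~\ref{lem:alexsplit} we have $\b(S_K) = m-1$, and by Lemma~\ref{lem:rankmminus1} we have $\b(L) \leq m-1$. The hypothesis $\Delta_L \neq 0$ forces $\b(L) = 0$, so $\b(S_K) - \b(L) = m-1 \geq g(L,S_K)$. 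Combined with the inequality $|\b(L) - \b(S_K)| \leq g(L,S_K)$ from Theorem~\ref{lem:torsionpolynomials}, this gives $g(L, S_K) = m-1 = \b(S_K) - \b(L)$, putting us exactly in the equality case of the second part of that theorem.

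First I would invoke the equality case of Theorem~\ref{lem:torsionpolynomials} to get $\deltator_L = \deltator_{S_K} \cdot f_1\,\ol{f_1} \cdot n_1$ for some $f_1 \in \L$ and negligible $n_1$. Since $\Delta_L \neq 0$ we have $\deltator_L = \Delta_L$, and by Lemma~\ref{lem:alexsplit}, $\deltator_{S_K} \doteq \prod_{i=1}^m \Delta_{K_i}(t_i)$. Hence $\Delta_L = \prod_{i=1}^m \Delta_{K_i}(t_i) \cdot f_1 \, \ol{f_1} \cdot n_1$. Running the identical argument with $S_J$ in place of $S_K$ yields $\Delta_L = \prod_{i=1}^m \Delta_{J_i}(t_i) \cdot f_2 \, \ol{f_2} \cdot n_2$. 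Equating the two expressions for $\Delta_L$ and setting $f = f_1$, $g = f_2$, $n = n_2 n_1^{-1}$ (a product of a negligible element and an inverse of a negligible element, hence still negligible as an element of $\L_S$; one absorbs any denominator into $f\,\ol f$ using that $\L$ is a UFD, exactly as in the last paragraph of the proof of Theorem~\ref{lem:torsionpolynomials}) gives the desired identity $\prod_{i=1}^m \Delta_{K_i}(t_i) \cdot f\,\ol{f} = \prod_{i=1}^m \Delta_{J_i}(t_i) \cdot g\,\ol{g} \cdot n$ with $n$ negligible in $\L$.

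The Witt-equivalence statement $\bigoplus_i \Bl_{K_i}(t_i) \sim \bigoplus_i \Bl_{J_i}(t_i)$ is the deeper half and requires Theorem~\ref{mainthm} rather than Theorem~\ref{lem:torsionpolynomials}. The argument is parallel: since $c(L, S_K) \leq g(L,S_K) = m-1 = \b(S_K) - \b(L) = |\b(L) - \b(S_K)|$ and also $|\b(L)-\b(S_K)| \leq c(L,S_K)$, we are in the equality case of Theorem~\ref{mainthm}, so $\Bl_L \oplus -\Bl_{S_K}$ is metabolic, i.e.\ $\Bl_L \sim \Bl_{S_K}$ in the Witt group. By Lemma~\ref{lem:blanchfield-split}, $\Bl_{S_K} \sim \bigoplus_{i=1}^m \Bl_{K_i}(t_i)$. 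The same for $S_J$ gives $\Bl_L \sim \bigoplus_{i=1}^m \Bl_{J_i}(t_i)$, and transitivity of Witt equivalence yields $\bigoplus_i \Bl_{K_i}(t_i) \sim \bigoplus_i \Bl_{J_i}(t_i)$. The ``in particular'' polynomial identity then follows formally from this Witt equivalence together with Lemmas~\ref{lem:sameorder} and~\ref{lem:ordernorm} applied to the metabolic form $\big(\bigoplus_i \Bl_{K_i}(t_i)\big) \oplus -\big(\bigoplus_i \Bl_{J_i}(t_i)\big)$, whose order is $\prod_i \Delta_{K_i} \cdot \prod_i \ol{\Delta_{J_i}}$ up to units in $\L_S$.

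The only genuine obstacle is bookkeeping at the level of units: the orders of modules over $\L$ are only defined up to units of $\L$ (monomials), Witt-theoretic statements hold only up to the negligible elements that are units in $\L_S$, and one must check that passing from the single-variable Blanchfield forms $\Bl_{K_i}$ over $\Z[t_i^{\pm 1}]$ to the tensored-up forms $\Bl_{K_i}(t_i)$ over $\L_S$ does not distort the order computation beyond a negligible factor — this is exactly the content of the parenthetical remark before Lemma~\ref{lem:blanchfield-split} and of Lemma~\ref{lem:alexsplit}. None of this is conceptually hard; it is the type of ``leave the details to the reader'' verification the paper performs repeatedly, so I would state it cleanly and cite the relevant lemmas rather than expand it fully.
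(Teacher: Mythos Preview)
Your proposal is correct and, for the main Witt-equivalence statement, follows exactly the paper's approach: verify that $\beta(L)=0$ and $\beta(S_K)=\beta(S_J)=m-1$, invoke the equality case of Theorem~\ref{mainthm} to get $\Bl_L\sim\Bl_{S_K}$ and $\Bl_L\sim\Bl_{S_J}$, apply Lemma~\ref{lem:blanchfield-split}, and conclude by transitivity; the polynomial identity then drops out of Lemmas~\ref{lem:sameorder} and~\ref{lem:ordernorm}. Your preliminary detour through Theorem~\ref{lem:torsionpolynomials} to get the polynomial identity directly is valid but redundant---the paper omits it and derives the polynomial statement purely as a consequence of the Witt equivalence.
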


\begin{proof}
  In this proof write $\K:= K_1 \sqcup \dots \sqcup K_{m}$ and  $\J := J_1\sqcup \dots \sqcup J_{m}$.  Since $\Delta_L \neq 0$ we have $\beta(L) =0$, while $$\beta(\K) = \beta(\J) = r$$ by Lemma~\ref{lem:alexsplit}.  By Theorem~\ref{mainthm} we have that $\Bl_{\K} \oplus -\Bl_L$ and $\Bl_{\J} \oplus - \Bl_L$ are metabolic and therefore both are zero in the Witt group.   In particular they are equivalent in the Witt group, from which it follows that $\Bl_{\K}=\Bl_{\J}$ in the Witt group.  By Lemma~\ref{lem:blanchfield-split} the Blanchfield forms of $\K$ and $\J$ are the Witt sums of the Blanchfield forms of their constituent knots.

The second statement is now a consequence of Lemma \ref{lem:sameorder} and Lemma \ref{lem:ordernorm}.
\end{proof}

Adams \cite{Ad96} gave the first example of a 2-component link $L$ with unknotted components and weak splitting number one, such that any crossing change which splits $L$ necessarily turns one of the two components of $L$ into a nontrivial knot.
In the final paragraph of \cite{Ad96}  Adams asked (see Question \ref{question:adams}) whether there exist such examples, where in addition we may guarantee high complexity of a component arising from a single splitting crossing change.
The following theorem gives an affirmative answer to Adams' question.\\

\noindent \textbf{Theorem \ref{thm:high-crossing-no-knots-from-wsp-one-links}.}\emph{
Fix $c \in \mathbb{N}$.  There exists a 2-component link $L$ with unknotted components such that such for any knot $K$ with $L \rightsquigarrow_1 K \sqcup U$, the crossing number of $K$ is at least~$c$.}\\

The proof of Theorem \ref{thm:high-crossing-no-knots-from-wsp-one-links} will require the remainder of this section.
The examples we construct are inspired by the construction of Adams \cite{Ad96}, but we remark that we have to change the links from \cite{Ad96} slightly, since the links in \cite[Figure~4]{Ad96} are boundary links and therefore have $\Delta_L=0$ and $\beta(L)=1$, whereas we require $\Delta_L \neq 0$ and $\beta(L)=0$ in order to apply our results.
 \begin{figure}[h]
    \labellist\small
    \pinlabel{$T$} at 60 90
    \pinlabel{$T$} at 360 90
    \endlabellist
  \includegraphics[scale=0.5]{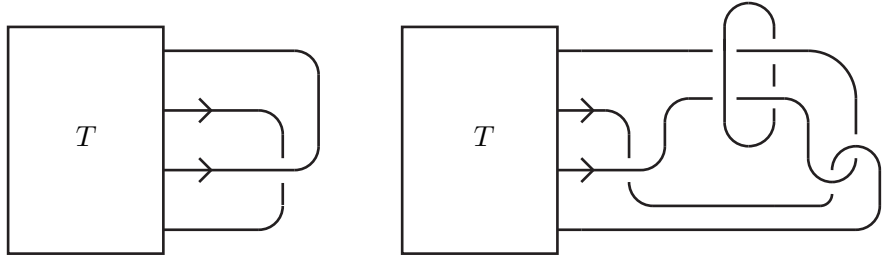}
    \caption{Left: the box denotes a tangle $T$ such that the diagram is an unknotting number one knot with the unknotting crossing isolated.  Right: replace the strands outside the box as shown to get a 2-component link $L_T$ with unknotted components and weak splitting number one.}
    \label{figure:wsp-one-links}
      \end{figure}

Choose $n$ to be such that $2n+1 \geq c$.
Choose an irreducible Laurent polynomial $\Delta(t) = a_0(1+t^{2n}) + a_1(t+t^{2n-1}) + \dots + a_{n-1}(t^{n-1} + t^{n+1}) + a_nt^n$ with $\Delta(1)=1$ and degree $2n$, where $2n=p-1$ for $p$ an odd prime.  For example choosing $n$ so that $2n=p-1$ for $p$ an odd prime greater than or equal to $c$, and taking $a_{2i}=1$ and $a_{2i+1}=-1$ for $i=0,\dots,\lfloor n/2 \rfloor$ gives rise to such a polynomial, since this is a cyclotomic polynomial and cyclotomic polynomials are irreducible.

According to the main theorem of~\cite{Kon79}, there exists
 an unknotting number one knot~$J$ with $\Delta_{J}(t) \doteq \Delta(t)$.  Let~$T$ be a tangle such that the picture on the left hand side of Figure~\ref{figure:wsp-one-links} is a diagram for $J$, where we isolated  a crossing, at which a crossing change results in an unknot.
  If necessary, switch~$J$ for one of either its reverse $rJ$, its mirror image $\ol{J}$ or $r\ol{J}$, in order to arrange that the orientations are as shown on the left of Figure~\ref{figure:wsp-one-links}.  (These orientations will soon be important for simplifying the construction of a Seifert surface.)
Replace the strands outside the box with the arrangement on the right hand side of Figure~\ref{figure:wsp-one-links}, to obtain a 2-component link with unknotted components which we call $L_T$.  Changing one crossing of $L_T$, in the clasp on the right, yields $J \sqcup U$.  This construction is an adaptation of that of \cite[Figure~4]{Ad96}.

\begin{lemma}\label{lem:non-zero-alex-poly}
The links $L_T$ constructed above have nonzero Alexander polynomial $\Delta_{L_T} \neq 0$.
\end{lemma}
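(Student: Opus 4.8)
The plan is to show $\b(L_T)=0$, which is equivalent to $\Delta_{L_T}\neq 0$. By Lemma~\ref{lem:rankmminus1} we have $\b(L_T)\leq 1$, so the only possibility to exclude is $\b(L_T)=1$. Since vanishing of $\Delta_{L_T}(t_1,t_2)\in\L$ would force the specialisation $\Delta_{L_T}(t,t)$ to vanish, it is enough to show $\Delta_{L_T}(t,t)\neq 0$; by the Torres condition this agrees, up to a factor of $t-1$, with the one-variable (total linking number) Alexander polynomial of $L_T$, so equivalently it suffices to show that $\det(tV-V^{\top})$ is a nonzero element of $\Z[t^{\pm 1}]$ for a Seifert matrix $V$ of a connected Seifert surface of $L_T$.

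The reason the orientations of $J$, and hence of the tangle $T$, were fixed earlier is to make such a Seifert surface easy to produce. I would apply Seifert's algorithm to the diagram of $L_T$ coming from the right hand side of Figure~\ref{figure:wsp-one-links}. With the chosen orientations the strands replacing the outside of the box are coherently oriented, so the algorithm returns a connected surface $F$ with $\partial F=L_T$ of controlled genus. Now $L_T$ and $J$ are built from the \emph{same} tangle $T$ and the construction alters only the part of the diagram outside the box; hence the Seifert circles and the bands coming from $T$ are the same in both cases. Choosing a basis of $H_1(F;\Z)$ adapted to this, the Seifert matrix $V$ of $F$ becomes block triangular, with one diagonal block equal to the block $V_T$ contributed by $T$ — the very same block appearing in a Seifert matrix $V_J$ of $J$ — and the complementary block a fixed, small integer matrix $W$ coming from the outside part of the $L_T$-diagram (the second, unknotted component together with the clasp), independent of $n$.

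From the block structure, $\det(tV-V^{\top})\doteq\det(tV_T-V_T^{\top})\cdot\det(tW-W^{\top})$. The analogous block decomposition of $V_J$ shows that $\det(tV_T-V_T^{\top})$ divides $\Delta_J(t)$ up to the usual indeterminacy; since $\Delta_J\doteq\Delta$ is irreducible of degree $2n$ whereas the complement of $V_T$ inside $V_J$ has size bounded independently of $n$, the factor $\det(tV_T-V_T^{\top})$ is nonzero (indeed $\doteq\Delta_J$) once $n$ is large enough, which is harmless since $n$ may be taken arbitrarily large. Finally, an inspection of the fixed, $n$-independent outside picture shows $\det(tW-W^{\top})\neq 0$. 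Therefore $\det(tV-V^{\top})\neq 0$, so $\Delta_{L_T}(t,t)\neq 0$ and hence $\Delta_{L_T}\neq 0$.

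The main obstacle is the middle step: identifying $F$ and $V$ explicitly enough to exhibit the block decomposition, uniformly in $n$, even though the tangle $T$ (and so the block $V_T$) grows with $n$. This is manageable precisely because the passage from the $J$-diagram to the $L_T$-diagram happens entirely outside the box containing $T$, so its contribution to $F$ and to $V$ is local and $n$-independent; all of the $n$-dependence is carried by the factor $\Delta_J$, which we already control.
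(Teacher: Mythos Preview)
Your approach is different from the paper's, and as written it has a genuine gap.

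The paper does not attempt a direct Seifert-matrix computation. Instead it computes the Sato--Levine invariant of $L_T$ to be $-1$ (by exhibiting explicit Seifert surfaces for the two components and checking that their intersection circle has self-linking $-1$), then invokes Cochran's theorem that the Sato--Levine invariant equals minus the coefficient of $z^3$ in the Conway polynomial $\nabla_{L_T}(z)$. Hence $\nabla_{L_T}\ne 0$, so the one-variable polynomial $\Delta_{L_T}(t)\ne 0$, and the reduction you also use (via $\Delta_L(t,t)(t-1)=\Delta_L(t)$) gives $\Delta_{L_T}(t_1,t_2)\ne 0$.

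The gap in your argument is twofold. First, the factorization $\det(tV-V^{\top})\doteq\det(tV_T-V_T^{\top})\cdot\det(tW-W^{\top})$ does \emph{not} follow from $V$ being block \emph{triangular}: if $V=\begin{pmatrix}V_T & B\\ 0 & W\end{pmatrix}$ then $tV-V^{\top}=\begin{pmatrix}tV_T-V_T^{\top} & tB\\ -B^{\top} & tW-W^{\top}\end{pmatrix}$, which is not block triangular unless $B=0$. So you actually need a block \emph{diagonal} Seifert matrix, and you have not shown that the curves coming from inside the tangle box have zero Seifert linking (in both directions) with the curves coming from the outside picture. Second, and more seriously, the assertion that ``an inspection \dots\ shows $\det(tW-W^{\top})\ne 0$'' is precisely the content of the lemma and cannot be waved through: the paper explicitly points out that Adams' original links in \cite[Figure~4]{Ad96} are built by the \emph{same} kind of local modification outside a tangle box, yet are boundary links with $\Delta_L=0$. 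So the nonvanishing of the ``outside'' contribution is exactly where the authors' modification of Adams' construction matters, and it is this step that requires a real argument (the paper supplies one via the Sato--Levine invariant). Your sketch does not provide one.
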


Before giving the proof we recall the definition of the Sato-Levine invariant of a 2-component link $L=L_1\cup L_2$ with linking number zero~\cite{Sat84}. Pick  two Seifert surfaces~$F_1$ and~$F_2$ in $S^3$, with $\partial F_i = L_i$, $F_1 \cap L_2 = F_2 \cap L_1 = \emptyset$ and $F_1\pitchfork F_2$.  The intersection $F_1 \cap F_2$ is a link $J \subset S^3$.  Choose an orientation of $J$, a framing for the normal bundle of $J$ in $F_1$ and a framing for the normal bundle of $J$ in $F_2$, such that the first two agree with the orientation of $F_1$, the first and the third agree with the orientation of $F_2$, and all three agree with the orientation of $S^3$.  Together the framings of the normal bundles to $J$ in $F_1$ and $F_2$ give a framing for the normal bundle of $J$ in $S^3$.  The framed bordism class of the link $J$ then defines the Sato-Levine invariant. Recall that two framed links in $S^3$ are framed bordant if and only if the sums of their framing coefficients are equal, since we can use the Pontryagin-Thom construction to produce an element of $\pi_3(S^2) \cong \Z$, with the Hopf invariant yielding the isomorphism to~$\Z$.

\begin{proof}[Proof of Lemma~\ref{lem:non-zero-alex-poly}]
We start with the following claim.
  \begin{claim}
    The links $L_T$ above have Sato-Levine invariant $-1$.
  \end{claim}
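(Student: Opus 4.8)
The plan is to compute the Sato--Levine invariant of $L_T$ directly from the construction in Figure~\ref{figure:wsp-one-links}, using the fact that $L_T$ differs from the trivial band-clasping-type picture only in a small region near the clasp, and that the band/clasp arrangement there is modelled on (a piece of) the Whitehead link, whose Sato--Levine invariant is $\pm 1$. First I would record that $L_T$ has linking number zero, so the Sato--Levine invariant is defined; this is visible from the diagram, since the two components cross only inside the clasp and there the crossings cancel. Then I would build explicit Seifert surfaces $F_1,F_2$ for the two components. The right-hand component is an unknot and bounds an obvious disc $F_2$; the left-hand component bounds a Seifert surface $F_1$ built from the tangle~$T$ together with a band following the clasp --- this is where the chosen orientations in Figure~\ref{figure:wsp-one-links} are used, to guarantee that the band can be attached compatibly and that $F_1\cap L_2=\emptyset$ and $F_2\cap L_1=\emptyset$ after a small isotopy.

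Next I would analyse the intersection $J:=F_1\pitchfork F_2$. Because $F_2$ is a disc meeting $F_1$ only near the clasp region, the intersection curve $J$ is a single circle sitting inside the clasp, and the tangle $T$ --- which lives entirely outside the clasp region --- is invisible to this computation. Hence $J$ with its induced framing is exactly the same framed link one obtains from the corresponding local model, which is the Whitehead link (or rather the clasp piece of it), for which the framed bordism class is the generator of $\pi_3(S^2)\cong\Z$ with Hopf invariant $\pm 1$. Chasing the orientation conventions fixed in Figure~\ref{figure:wsp-one-links} pins down the sign as $-1$. This proves the claim.

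To finish the proof of Lemma~\ref{lem:non-zero-alex-poly} I would invoke the standard relation between the Sato--Levine invariant and the Alexander polynomial of a linking-number-zero $2$-component link: the Sato--Levine invariant equals (up to sign) the coefficient $\beta_{1,1}$ obtained from $\Delta_L$, equivalently it is the leading term controlling the lowest-order behaviour of $\Delta_L$ near $(1,1)$; in any case a non-zero Sato--Levine invariant forces $\Delta_L\neq 0$ (if $\Delta_L=0$, i.e. $\beta(L)\geq 1$, the Sato--Levine invariant vanishes, as for the boundary links of \cite[Figure~4]{Ad96}). Since we have computed the Sato--Levine invariant of $L_T$ to be $-1\neq 0$, we conclude $\Delta_{L_T}\neq 0$.

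The main obstacle I anticipate is the bookkeeping for the Seifert surface $F_1$ and the framings: one must check carefully that the tangle $T$ genuinely does not contribute to $F_1\cap F_2$, and that the three orientation/framing conventions in the definition of the Sato--Levine invariant can be met simultaneously --- this is precisely why the construction insists on replacing $J$ by $rJ$, $\ol J$ or $r\ol J$ as needed. Once the geometry is set up correctly, identifying the resulting framed circle with the Hopf generator and extracting the sign is routine.
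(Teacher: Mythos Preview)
Your overall strategy matches the paper's: build explicit Seifert surfaces $F_1,F_2$ with $F_i\cap L_j=\emptyset$ for $i\neq j$, then read off the framed bordism class of $F_1\cap F_2$. However, your construction of $F_2$ has a genuine gap. The ``obvious disc'' spanning the right-hand unknot $L_2$ is pierced twice by the band of $L_1$ at the clasp, and these intersections \emph{cannot} be removed by a small isotopy: the clasp region is modelled on the Whitehead link, and for the Whitehead link neither component bounds an embedded disc in the complement of the other (this is exactly what makes that link nontrivial). So your assertion ``$F_2\cap L_1=\emptyset$ after a small isotopy'' fails, and without it the Sato--Levine recipe does not apply to your pair of surfaces. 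The paper repairs this by taking the obvious disc and tubing it along $L_1$, passing the tube around the clasp; the result is a genus-one $F_2$ disjoint from $L_1$. With this $F_2$, the intersection $F_1\cap F_2$ is a single circle (where the tube meets the clasp band of $F_1$), and the induced framing picks up one full negative twist from the tube going around the clasp, giving Sato--Levine invariant $-1$. Your localisation idea (``the tangle $T$ is invisible to the computation'') and the comparison with the Whitehead model are sound once the correct $F_2$ is used; only the surface itself needs fixing.

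On the passage from Sato--Levine $=-1$ to $\Delta_{L_T}\neq 0$: the paper makes this explicit via Cochran's identification of the Sato--Levine invariant with minus the coefficient of $z^3$ in $\nabla_L(z)$, together with Kawauchi's relation $\Delta_L(t,t)(t-1)=\Delta_L(t)$ linking the multivariable and single-variable Alexander polynomials. Your shortcut (``nonzero Sato--Levine forces $\Delta_L\neq 0$'') is correct in spirit but would need a precise reference or this chain of identifications to be complete.
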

To prove the claim, apply the Seifert algorithm to the left hand component of $L_T$, on the right of Figure~\ref{figure:wsp-one-links}.  Call this component $L_1$ and the resulting Seifert surface~$F_1$.  Construct a Seifert surface~$F_2$ for the other component~$L_2$ by taking the obvious disc and tubing along $L_1$ where $L_1$ hits the disc, passing the tube around the clasp.  This makes Seifert surfaces $F_1,F_2$ for $L_1,L_2$ respectively with $F_1 \cap L_2 = \emptyset = F_2 \cap L_1$.  The orientation is important for ensuring that the Seifert algorithm gives a surface $F_1$ disjoint from~$L_2$.  The intersection $F_1 \cap F_2$ is a single circle and the self linking of $F_1 \cap F_2$ from the framing induced by the Seifert surfaces is $-1$; it can be seen that a full negative twist in the induced framing arises when passing around the clasp.  This completes the proof of the claim.

As was shown in~\cite[Theorem~4.1]{Co85}, the Sato-Levine invariant of a 2-component link~$L$ with linking number zero is equal to minus the coefficient of $z^3$ in the Conway polynomial $\nabla_L(z)$.  Thus the Conway polynomial is nonzero.

According to Kawauchi~\cite[Proposition~7.3.14]{Ka96} we may relate the multivariable and single variable Alexander polynomials by:
\[\Delta_L(t,t) (t-1) = \Delta_L(t).\]
 Thus, to show that the multivariable Alexander polynomial is nonzero it suffices to show that $\Delta_L(t) \neq 0$.  Suppose that $V$ is an $m \times m$ Seifert matrix for $L$ arising from a connected Seifert surface.  Then $$t^{-m/2}\Delta_L(t) \doteq \det(t^{1/2}V-t^{-1/2}V^T) = \nabla_L(t^{1/2} - t^{-1/2}) = \nabla_L(z);$$ the change of variables is $z = t^{1/2} - t^{-1/2}$.  Thus if $\Delta_L(t)=0$ then $\nabla_L(z)=0$.  The fact shown above that $\nabla_L(z) \neq 0$ therefore completes the proof of Lemma~\ref{lem:non-zero-alex-poly}.
\end{proof}

The next result follows immediately from Alexander's original definition; compare also \cite[Exercise 8.C.12, page 208]{Ro76}. The proof
is left to the reader.
For a Laurent polynomial $p(t) = \sum_{i \in \Z} a_i t^i \in \Z[t^{\pm 1}]$ we define $\deg(p(t))$ to be the difference
$\deg(p(t)) := \max \{j \in \Z \, | \, a_j \neq 0\} - \min \{k \in \Z \, | \, a_k \neq 0\}$.

\begin{lemma}\label{lem:lower-bound-crossing-number}
Let $K$ be a nontrivial knot and $c$ be its crossing number. Then the degree of the Alexander polynomial satisfies $\deg\Delta_K \le c-1$.
\end{lemma}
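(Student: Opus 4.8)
The plan is to extract the bound from a minimal-crossing diagram, as the statement suggests. Fix a diagram $D$ of $K$ with exactly $c$ crossings; since $K$ is nontrivial, $c\ge 1$ and $D$ has no nugatory crossing. Applying Seifert's algorithm to $D$ produces a connected orientable Seifert surface $F$, built from $s$ discs (one per Seifert circle) and $c$ bands (one per crossing), so $\chi(F)=s-c$; as $\partial F=K$ is connected this gives $b_1(F)=2g(F)=1-\chi(F)=c-s+1$.

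The one step with any content is the inequality $s\ge 2$. If $D$ had a single Seifert circle $C$, every crossing would be a self-band of $C$; the bands then determine a chord diagram on $C$ (with chords drawn inside or outside $C$, and non-crossing within each side), and peeling off an innermost chord yields an embedded disc meeting $D$ in exactly one crossing, i.e.\ a nugatory crossing --- contradicting minimality of $D$. Hence $s\ge 2$ and $b_1(F)\le c-1$.

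Now I would invoke the classical identity $\Delta_K(t)\doteq\det(tV-V^{T})$, where $V$ is a Seifert matrix of $F$, a $b_1(F)\times b_1(F)$ integer matrix. The right-hand side is an element of $\Z[t]$ of ordinary degree at most $b_1(F)$, so its span $\max\{j:a_j\ne 0\}-\min\{k:a_k\ne 0\}$ --- which is what $\deg$ means in the statement --- is also at most $b_1(F)$. Since the Alexander module of a knot is $\Lambda$-torsion we have $\Delta_K\ne 0$, and passing from $\det(tV-V^{T})$ to $\Delta_K$ (i.e.\ multiplying by a unit $\pm t^{k}$ of $\Lambda$) changes neither the vanishing nor the span. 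Therefore $\deg\Delta_K\le b_1(F)\le c-1$, which is the claim.

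There is no real obstacle here beyond the elementary argument for $s\ge 2$; everything else is standard. A variant closer to Alexander's original diagrammatic computation (and to the Rolfsen exercise cited) would instead use a Wirtinger presentation of $\pi_1(S^3\setminus K)$ arising from $D$, with $c$ meridional generators and $c$ relators one of which is redundant: after scaling each row of the associated abelianised Fox matrix by a suitable unit of $\Lambda$ one gets a presentation matrix of the Alexander module with entries in $\Z[t]$ of degree $\le 1$, and then $\Delta_K$, being (up to a unit) the greatest common divisor of the $(c-1)\times(c-1)$ minors of this matrix, again has degree at most $c-1$; the only points to verify in that approach are that these minors lie in $\Z[t]$ and that one Wirtinger relator may be discarded.
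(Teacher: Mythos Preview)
The paper gives no proof of this lemma; it simply says the bound ``follows immediately from Alexander's original definition'' and cites a Rolfsen exercise, leaving the details to the reader.  Your second (Wirtinger/Fox) variant is essentially what that citation points to: from a $c$--crossing diagram one gets a presentation of the Alexander module whose abelianised Fox matrix has entries of $t$--degree at most~$1$, and any $(c-1)\times(c-1)$ minor therefore has span at most $c-1$.  That argument is correct and self-contained.

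Your first (Seifert surface) approach also succeeds, but the step $s\ge 2$ is actually easier than the nugatory--crossing argument you give, and that argument has a small gap.  In \emph{any} knot diagram with at least one crossing, each crossing joins two \emph{distinct} Seifert circles: after the oriented smoothing the two local arcs at a crossing are parallel and cooriented, and two cooriented parallel arcs cannot lie on the same Jordan curve $C$, since the strip between them would have to be simultaneously inside and outside $C$.  Hence $s\ge 2$ whenever $c\ge 1$, with no appeal to reducedness or minimality of the diagram.  Your innermost--chord argument, by contrast, does not quite close: the arc $\alpha$ cut off by an innermost \emph{inside} chord may still be met by \emph{outside} chords, so a curve encircling $\alpha$ can meet $D$ in more than the one crossing, and the crossing need not be nugatory.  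Since $s=1$ is in fact impossible, this issue is moot, but it is worth tightening the justification.
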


%
%
%
%

\begin{proof}[Continuation of the proof of Theorem~\ref{thm:high-crossing-no-knots-from-wsp-one-links}]
Consider the links $L_T$ constructed above.  We have $L \rightsquigarrow_1 J \sqcup U$, where $\deg(\Delta_J) = 2n$ and $2n+1 \geq c$; recall that $n$ was chosen to satisfy this property with respect to $c$.  For any knot $K$ with Alexander polynomial having degree $2n$ we have $2n \leq k-1$, where $k$ is the crossing number of $K$.  Thus we have $c \leq 2n+1 \leq k$.  It therefore suffices to show that any knot $K$ arising from one splitting crossing change on $L_T$ has Alexander polynomial containing $\Delta_J(t)$ as a factor.

Since $\Delta_{L_T} \neq 0$, we have that $\beta(L) = 0$, whereas $\beta(J \sqcup U) =1$ by Lemma~\ref{lem:alexsplit}.  Therefore by Theorem~\ref{lem:torsionpolynomials} and another application of Lemma~\ref{lem:alexsplit} we have that
\[\Delta_{L_T}(t_1,t_2) = \deltator_{J \sqcup U} \cdot f \, \ol{f} \cdot m = \Delta_J(t_1) \cdot f \, \ol{f} \cdot m\]
for some $f \in \L$ and some negligible  $n\in \L$.

Now suppose that we have some other splitting crossing change on $L$ yielding $K \sqcup U$.  Then similarly to above we have
\[\Delta_{L_T}(t_1,t_2) = \deltator_{K \sqcup U} \cdot g \,  \ol{g} \cdot m' = \Delta_K(t_1) \cdot g \, \ol{g} \cdot m'\]
for some $g \in L$ and some negligible  $m'\in \L$.
Therefore
\begin{equation}\label{eq:exponents}
\Delta_J(t_1) \cdot f \,\ol{f} \cdot m = \Delta_K(t_1) \cdot g \,\ol{g} \cdot m'.
\end{equation}
The ring $\L$
is a UFD and $\Delta_J$ is irreducible. Therefore a non--negative number $a_{\Delta_K}$ such that $\Delta_J^{a_{\Delta_K}}$ divides
$\Delta_K$, but $\Delta_J^{1+a_{\Delta_K}}$ does not, is well--defined. Similarly we define $a_{f}$, $a_{\ol{f}}$, $a_{g}$, $a_{\ol{g}}$.
As $\Delta_J$ is symmetric, we infer that $a_{f}=a_{\ol{f}}$ and $a_g=a_{\ol{g}}$. Notice that $\Delta_J$, being a non--trivial knot polynomial, does not divide negligible polynomials $m$ and $m'$.

The maximal exponent $a$ such that $\Delta_J$ divides the left hand side of \eqref{eq:exponents}
is $1+2a_f$. For the right hand side it is $a_{\Delta_K}+2a_g$. This implies
that $a_{\Delta_K}$ is odd, in particular $\Delta_J$ divides $\Delta_K$. This shows that $\deg\Delta_K\ge\deg\Delta_J=2n$.
Recall that $n$ was chosen so that $2n+1 \geq c$, and by Lemma~\ref{lem:lower-bound-crossing-number} this implies that the crossing number of~$K$ is at least $c$.
\end{proof}

\section{Questions}\label{section:conjectures}

Kohn~\cite{Koh93} initiated the study of unlinking numbers for links with more than one component.
There are five 2-component 9 crossing links for which Kohn could not compute the unlinking number, namely
\[ 9^2_3=L9a30, 9^2_{15}=L9a15, 9^2_{27}=L9a17 ,9^2_{31} = L9a2 \mbox{ and }9^2_{36} = L9a10,\]
where the names come from Rolfsen's book~\cite{Ro76} and the Linkinfo tables~\cite{CL} respectively.
For each link the question is whether the unlinking number is two or three.  Kanenobu recently announced a proof that the unlinking number of $L9a30$ is 3.
Unfortunately the  techniques of this paper do not help.  It would be very interesting if it could be shown that one of the four remaining links has unlinking number 3.

\end{document}